\newtheorem{thm}{Theorem}[section]
\newtheorem{lem}[thm]{Lemma}
\newtheorem{cor}[thm]{Corollary}
\newtheorem{pro}[thm]{Proposition}
\newtheorem{ex}[thm]{Example}
\theoremstyle{definition}
\newtheorem{rmk}[thm]{Remark}
\newtheorem{defi}[thm]{Definition}
\newcommand{\nc}{\newcommand}
\newcommand{\delete}[1]{}
\nc{\mlabel}[1]{\label{#1}}  
\nc{\mcite}[1]{\cite{#1}}  
\nc{\mref}[1]{\ref{#1}}  
\nc{\mbibitem}[1]{\bibitem{#1}} 
\nc{\mlabel}[1]{\label{#1}{\hfill \hspace{1cm}{\bf{{\ }\hfill(#1)}}}}
\nc{\mcite}[1]{\cite{#1}{{\em{{\ }(#1)}}}}  
\nc{\mref}[1]{\ref{#1}{{\em{{\ }(#1)}}}}  
\nc{\mbibitem}[1]{\bibitem[\em #1]{#1}} 
\newcommand {\emptycomment}[1]{}
\nc{\oprn}{\theta}
\nc{\Oprn}{\Theta}
\nc{\calo}{\mathcal{O}}
\nc{\oop}{$\mathcal{O}$-operator\xspace}
\nc{\oops}{$\mathcal{O}$-operators\xspace}
\nc{\mrho}{{\bm{\varrho}}}
\nc{\emk}{\mathbf{K}}
\nc{\invlim}{\displaystyle{\lim_{\longleftarrow}}\,}
\nc{\ot}{\otimes}
\newcommand{\be }{\begin{equation}}
\newcommand{\ee }{\end{equation}}
\newcommand{\g}{\mathfrak g}
\newcommand{\h}{\mathfrak h}
\newcommand{\m}{\mathfrak m}
\newcommand{\huaB}{\mathcal{B}}
\newcommand{\huaR}{\mathcal{R}}
\newcommand{\huaG}{\mathcal{G}}
\newcommand{\huaV}{\mathcal{V}}
\newcommand{\huaC}{{\mathcal{C}}}
\newcommand{\huaH}{\mathcal{H}}
\newcommand{\huaO}{{\mathcal{O}}}
\newcommand{\huaZ}{\mathcal{Z}}
\newcommand{\frkI}{\mathfrak I}
\newcommand{\frkL}{\mathfrak L}
\newcommand{\frkR}{\mathfrak R}
\newcommand{\frkT}{\mathfrak T}
\newcommand{\frkX}{\mathfrak X}
\newcommand{\half}{\frac{1}{2}}
\newcommand{\Courant}[1]{\left\llbracket  #1\right\rrbracket }
\newcommand{\Id}{{\rm{Id}}}
\newcommand{\br}[1]{   [ \cdot,    \cdot  ]   }
\newcommand{\ltp}[1]{\Courant{\cdot,\cdot,\cdot}}
\newcommand{\Hom}{\mathrm{Hom}}
\newcommand{\Nij}{\mathsf{Nij}}
\newcommand{\Ob}{\mathsf{Ob}}
\newcommand{\gl}{\mathfrak {gl}}
\newcommand{\Ker}{\mathrm{Ker}}
\newcommand{\ad}{\mathrm{ad}}
\newcommand{\de}{\mathrm{d}}
\nc{\CV}{\mathbf{C}}
\newcommand{\LYA}{Lie-Yamaguti algebra}
\begin{document}

\title{Cohomology and deformations of relative Rota-Baxter operators on Lie-Yamaguti algebras}

\author{Jia Zhao}
\address{Jia Zhao, School of Sciences, Nantong University, Nantong, 226019, Jiangsu, China}
\email{zhaojia@ntu.edu.cn}

\author{Yu Qiao*}
\address{Yu Qiao (corresponding author), School of Mathematics and Statistics, Shaanxi Normal University, Xi'an, 710119, Shaanxi, China}
\email{yqiao@snnu.edu.cn}

\date{\today}

\thanks{{\em Mathematics Subject Classification} (2020): Primary 17B38; Secondary 17B60,17A99}

\thanks{Qiao was partially supported by NSFC grant 11971282.}

\begin{abstract}
 In this paper, we establish the cohomology of relative Rota-Baxter operators on Lie-Yamaguti algebras via the Yamaguti cohomology. Then we use this type of cohomology to characterize deformations of relative Rota-Baxter operators on Lie-Yamaguti algebras. We show that if two linear or formal deformations of a relative Rota-Baxter operator are equivalent, then their infinitesimals are in the same cohomology class in the first cohomology group. Moreover, an order $n$ deformation of a relative Rota-Baxter operator can be extended  to an order $n+1$ deformation if and only if the obstruction class in the second cohomology group is trivial.
\end{abstract}


\keywords{Lie-Yamaguti algebra, relative Rota-Baxter operator, cohomology, deformation}

\maketitle


\smallskip


\tableofcontents

\allowdisplaybreaks

 \section{Introduction}
 This paper is devoted to the cohomology theory and deformations of relative Rota-Baxter operators on Lie-Yamaguti algebras.

 Lie-Yamaguti algebras are a generalization of Lie algebras and Lie triple systems, which can be traced back to Nomizu's work on the invariant affine connections on homogeneous spaces in 1950's (\cite{Nomizu}) and Yamaguti's work on general Lie triple systems and Lie triple algebras (\cite{Yamaguti1,Yamaguti2}). Kinyon and Weinstein first called this object a \LYA~  when studying Courant algebroids in the earlier 21st century \cite{Weinstein}. Since then, this system is called a \LYA, which has attracted much attention and is widely investigated recently. For instance, Benito and his collaborators deeply explored irreducible Lie-Yamaguti algebras and their relations with orthogonal Lie algebras \cite{B.B.M,B.D.E,B.E.M1,B.E.M2}. Deformations and extensions of Lie-Yamaguti algebras were examined in \cite{L.CHEN,Ma Y,Zhang1,Zhang2}. Sheng, the first author, and Zhou analyzed product structures and complex structures on Lie-Yamaguti algebras by means of Nijenhuis operators in \cite{Sheng Zhao}. Takahashi studied modules over quandles using representations of Lie-Yamaguti algebras in \cite{Takahashi}.

 Another two topics of the present paper are deformation theory and Rota-Baxter operators, which play important roles in both mathematics and mathematical physics. In mathematics, informally speaking, a deformation of an object is another object that shares the same structure of the original object after a perturbation. Motivated by the foundational work of Kodaira and Spencer \cite{Kodaira} for complex analytic structures, the generalization in algebraic geometry of deformation theory was founded \cite{Hart}. As an application in algebra, Gerstenhaber first studied the deformation theory on associative algebras \cite{Gerstenhaber1,Gerstenhaber2,Gerstenhaber3,Gerstenhaber4,Gerstenhaber5}. Then Nijenhuis and Richardson extended this idea and established the similar results on Lie algebras \cite{Nij1,Nij2}. Deformations of other algebraic structures such as pre-Lie algebras have also been developed \cite{Burde}. In general, deformation theory was set up for binary quadratic operads by Balavoine \cite{bala}.

 Meanwhile, Baxter introduced the notion of Rota-Baxter operators on associative algebras when studying fluctuation theory \cite{Ba}. Then Kupershmidt introduced the notion of $\huaO$-operators (called relative Rota-Baxter operators in the present paper) on Lie algebras  which is a weighted zero Rota-Baxter operator when he found that a relative Rota-Baxter operator is a solution to the classical Yang-Baxter equation in \cite{Kupershmidt}. For more details about classical Yang-Baxter equations and Rota-Baxter operators, one can see \cite{STS} and \cite{Gub}.

Since deformation theory and Rota-Baxter operators have important applications in mathematics and mathematical physics, Sheng and his collaborators developed cohomology theories and deformations of relative Rota-Baxter operators on a series of algebraic structures such as Lie algebras, $3$-Lie algebras and Leibniz algebras. All such cohomology theories obey the following rule: the infinitesimal of a formal deformation is characterized by the cohomology class in the first cohomology group, and the obstruction class is trivial if and only if an order $n$ deformation can be extended to an order $n+1$ deformation. See \cite{TBGS,THS,T.S2} for more details. Recently, Sheng and the first author introduced the notion of relative Rota-Baxter operators on Lie-Yamaguti algebras and revealed the fact that a pre-Lie-Yamaguti algebra is the underlying algebraic structure of relative Rota-Baxter operators \cite{SZ1}.

By the virtue of Lie-Yamaguti algebras and relative Rota-Baxter operators, it is natural to ask the following question: Does there exist an appropriate cohomology theory of relative Rota-Baxter operators on Lie-Yamaguti algebras, which can be used to classify certain types of deformations? We tackle this problem as follows.

The most important step is to construct a suitable cohomology theory for relative Rota-Baxter operators on Lie-Yamaguti algebras. Let $(\g,\br,\Courant{\cdot,\cdot,\cdot})$ denote a Lie-Yamaguti algebra and $(V;\rho,\mu)$ a representation of $\g$. It has been proved that in \cite{SZ1} if $T:V\to \g$ is a relative Rota-Baxter operator on  $\g$ with respect to $(V;\rho,\mu)$, then there is a Lie-Yamaguti algebra structure $([\cdot,\cdot]_T,\Courant{\cdot,\cdot,\cdot}_T)$ on $V$. The key role played in this step is to construct a representation of this \LYA~ $(V,[\cdot,\cdot]_T,\Courant{\cdot,\cdot,\cdot}_T)$ on $\g$ (viewed as the representation space), that is, we shall give the explicit formulas of linear maps $\varrho:V\to \gl(\g)$, $\varpi:\otimes^2V\to \gl(\g)$ and  $D_{\varrho,\varpi}$, which are linked with the representation $(V;\rho,\mu)$ and the relative Rota-Baxter operator $T$, such that the triple  $(\g;\varrho,\varpi)$ becomes a representation of \LYA~ $V$ (see Lemma \ref{represent1} and Theorem \ref{represent}).
Consequently, we obtain the corresponding Yamaguti cohomology of $(V,[\cdot,\cdot]_T,\Courant{\cdot,\cdot,\cdot}_T)$ with coefficients in the representation $(\g;\varrho,\varpi)$.
However, note that the cochain complex of Yamaguti cohomology starts only from $1$-cochians, {\em not} from $0$-cochains.
The main {\em difficulty} is to choose $0$-cochains appropriately and build a proper coboundary map from the set of $0$-cochains to that of $1$-cochains.
{ \em Our strategy} is to define the set of $0$-cochains to be $\wedge^2\g$, then construct the coboundary map explicitly (see Proposition \ref{0cocy}).
In this way, we obtain a cochain complex (associated to $V$) starting from $0$-cochains,
which gives rise to the cohomology of the relative Rota-Baxter operator $T$ on Lie-Yamaguti algebras $(\g,[\cdot,\cdot],\Courant{\cdot,\cdot,\cdot})$ (see Definition \ref{cohomology}). A Lie-Yamaguti algebra owns two algebraic operations, which makes its cochain complex much more complicated than others, while other algebras such as Lie algebras, pre-Lie algebras, Leibniz algebras or even $3$-Lie algebras owns only one structure map. As a result, the computation is technical in defining the cohomology of relative Rota-Baxter operators.

The next step is to make use of the cohomology theory to investigate deformations of relative Rota-Baxter operators on Lie-Yamaguti algebras. We consider three kinds of deformations: linear, formal, and higher order deformations. It turns out that our cohomology theory satisfies the rule that is mentioned above and works well (see Theorem \ref{thm1}, Theorem \ref{thm2},  and Theorem \ref{ob}).


 \emptycomment{More precisely,

\noindent
{\bf Theorem A.} \emph{Let $T:V\to \g$ be a relative Rota-Baxter operator on a Lie-Yamaguti algebra $(\g,[\cdot,\cdot],\Courant{\cdot,\cdot,\cdot})$ with respect to a representation $(V;\rho,\mu)$. If two linear deformations $T_t^1=T+t\frkT_1$ and $T_t^2=T+t\frkT_2$ of $T$ are equivalent, then $\frkT_1$ and $\frkT_2$ are in the same class of the cohomology group $\huaH^1_T(V,\g)$.}\\

\noindent
{\bf Theorem B.} \emph{Let $T$ be a relative Rota-Baxter operator on a Lie-Yamaguti algebra $(\g,[\cdot,\cdot],\Courant{\cdot,\cdot,\cdot})$ with respect to a representation $(V;\rho,\mu)$. If two formal deformations $\bar T_t=\sum_{i=0}^\infty \bar{\frkT_i}t^i$ and $T_t=\sum_{i=0}^\infty \frkT_it^i$, where $\bar{\frkT_0}=\frkT_0=T$, are equivalent, then their infinitesimals are in the same cohomology classes.}\\

 \noindent
 {\bf Theorem C.} \emph{Let $T$ be a relative Rota-Baxter operator on a Lie-Yamaguti algebra $(\g,[\cdot,\cdot],\Courant{\cdot,\cdot,\cdot})$ with respect to a representation $(V;\rho,\mu)$, and $T_t=\sum_{i=0}^n\frkT_it^i$ be an order $n$ deformation of $T$. Then $T_t$ is extensible if and only if the obstruction class ~$[\Ob^T]\in \huaH_T^2(V,\g)$ is trivial.}}

 As stated before, a Lie triple system is a spacial case of a Lie-Yamaguti algebra, so the conclusions in the present paper can also be  adapted to the Lie triple system context. See \cite{CHMM} for more details about cohomologies and deformations of relative Rota-Baxter operators on Lie triple systems. However, unlike other algebras such as Lie algebras or Leibniz algebras, there does not exist a suitable graded Lie algebra whose Maurer-Cartan elements are just the Lie-Yamaguri algebra structure by now, thus we do not find a suitable $L_\infty$-algebra that controls the deformations of relative Rota-Baxter operators.  We will overcome this problem in the future and also expect new findings in this direction.

The paper is structured as follows.
In Section 2, we recall some basic concepts including those of Lie-Yamaguti algebras, representations and cohomologies.
In Section 3, the cohomology theory of relative Rota-Baxter operators on Lie-Yamaguti algebras is constructed by using that of Lie-Yamaguti algebras.
Finally, we utilize our established cohomology theory to analyze three kinds of deformations of relative Rota-Baxter operators on Lie-Yamaguti algebras: namely linear, formal, and higher order deformations in Section 4.

 \vspace{2mm}

 \noindent
 {\bf Acknowledgements:} We would like to thank Prof. Yunhe Sheng and Rong Tang of Jilin University for their useful comments and discussions.

\section{Preliminaries: Lie-Yamaguti algebras, representations and cohomologies}
In this paper, all vector spaces are assumed to be over a field $\mathbb{K}$ of characteristic $0$ and finite-dimensional.

In this section, we recall some basic notions such as \LYA s, representations and their cohomology theories.
The notion of  Lie-Yamaguti algebras was first defined by  Yamaguti in \cite{Yamaguti1,Yamaguti2}.

\begin{defi}\cite{Weinstein}\label{LY}
A {\bf \LYA} is a vector space $\g$ equipped with a bilinear bracket $[\cdot,\cdot]:\wedge^2  \mathfrak{g} \to \mathfrak{g} $ and a trilinear bracket $\Courant{\cdot,\cdot,\cdot}:\wedge^2\g \otimes  \mathfrak{g} \to \mathfrak{g} $, which meet the following conditions: for all $x,y,z,w,t \in \g$,
\begin{eqnarray}
~ &&\label{LY1}[[x,y],z]+[[y,z],x]+[[z,x],y]+\Courant{x,y,z}+\Courant{y,z,x}+\Courant{z,x,y}=0,\\
~ &&\Courant{[x,y],z,w}+\Courant{[y,z],x,w}+\Courant{[z,x],y,w}=0,\\
~ &&\label{LY3}\Courant{x,y,[z,w]}=[\Courant{x,y,z},w]+[z,\Courant{x,y,w}],\\
~ &&\Courant{x,y,\Courant{z,w,t}}=\Courant{\Courant{x,y,z},w,t}+\Courant{z,\Courant{x,y,w},t}+\Courant{z,w,\Courant{x,y,t}}.\label{fundamental}
\end{eqnarray}
In the sequel, we denote a \LYA ~by $(\g,\br,\ltp))$
\end{defi}

\begin{ex}
Let $(\g,\br))$ be a Lie algebra. Define a trilinear bracket
$$\ltp::\wedge^2\g\ot \g\to \g$$
by
$$\Courant{x,y,z}:=[[x,y],z],\quad \forall x,y,z \in \g.$$
Then by a direct computation, we know that $(\g,\br,\ltp))$ forms a \LYA.
\end{ex}

The following example is even more interesting.
\begin{ex}
Let $M$ be a closed manifold with an affine connection, and denote by $\frkX(M)$ the set of vector fields on $M$. For all $x, y, z \in \frkX(M) $, set
\begin{eqnarray*}
[x,y]&:=&-T(x,y),\\
\Courant{x,y,z}&:=&-R(x,y)z,
\end{eqnarray*}
where $T$ and $R$ are torsion tensor and curvature tensor respectively. It turns out that the triple
$ (\frkX(M),[\cdot,\cdot],\Courant{\cdot,\cdot,\cdot})$ forms a \LYA. See \cite{Nomizu} for more details.
\end{ex}
\emptycomment{
\begin{rmk}
Given a Lie-Yamaguti algebra $(\m,[\cdot,\cdot]_\m,\Courant{\cdot,\cdot,\cdot}_\m)$ and any two elements $x,y \in \m$, the linear map $D(x,y):\m \to \m,~z\mapsto D(x,y)z=\Courant{x,y,z}_\m$ is an (inner) derivation. Moreover, let $D(\m,\m)$ be the linear span of the inner derivations. Consider the vector space $\g(\m)=D(\m,\m)\oplus \m$, and endow it with a Lie bracket as follows: for all $x,y,z,t \in \m$
\begin{eqnarray*}
[D(x,y),D(z,t)]_{\g(\m)}&=&D(\Courant{x,y,z}_\m,t)+D(z,\Courant{x,y,t}_\m),\\
~[D(x,y),z]_{\g(\m)}&=&D(x,y)z=\Courant{x,y,z}_\m,\\
~[z,t]_{\g(\m)}&=&D(z,t)+[z,t]_\m.
\end{eqnarray*}
Then $(\g(\m),[\cdot,\cdot]_{\g(\m)})$ becomes a Lie algebra.
\end{rmk}}

The following two notions are standard.

\begin{defi}\cite{Sheng Zhao,Takahashi}
Suppose that $(\g,[\cdot,\cdot]_{\g},\Courant{\cdot,\cdot,\cdot}_{\g})$ and $(\h,[\cdot,\cdot]_{\h},\Courant{\cdot,\cdot,\cdot}_{\h})$ are two Lie-Yamaguti algebras. A {\bf homomorphism} from $(\g,[\cdot,\cdot]_{\g},\Courant{\cdot,\cdot,\cdot}_{\g})$ to $(\h,[\cdot,\cdot]_{\h},\Courant{\cdot,\cdot,\cdot}_{\h})$ is a linear map $\phi:\g \to \h$ that preserves the \LYA ~structures, that is, for all $x,y,z \in \g$,
\begin{eqnarray*}
\phi([x,y]_{\g})&=&[\phi(x),\phi(y)]_{\h},\\
~ \phi(\Courant{x,y,z}_{\g})&=&\Courant{\phi(x),\phi(y),\phi(z)}_{\h}.
\end{eqnarray*}
If, moreover, $\phi$ is a bijection, it is then called an {\bf isomorphism}.
\end{defi}

\begin{defi}\cite{Yamaguti2}\label{defi:representation}
Let $(\g,[\cdot,\cdot],\Courant{\cdot,\cdot,\cdot})$ be a Lie-Yamaguti algebra. A {\bf representation} of $\g$ is a vector space $V$ equipped with a linear map $\rho:\g \to \gl(V)$ and a bilinear map $\mu:\otimes^2 \g \to \gl(V)$, which meet the following conditions: for all $x,y,z,w \in \g$,
\begin{eqnarray}
~&&\label{RLYb}\mu([x,y],z)-\mu(x,z)\rho(y)+\mu(y,z)\rho(x)=0,\\
~&&\label{RLYd}\mu(x,[y,z])-\rho(y)\mu(x,z)+\rho(z)\mu(x,y)=0,\\
~&&\label{RLYe}\rho(\Courant{x,y,z})=[D_{\rho,\mu}(x,y),\rho(z)],\\
~&&\label{RYT4}\mu(z,w)\mu(x,y)-\mu(y,w)\mu(x,z)-\mu(x,\Courant{y,z,w})+D_{\rho,\mu}(y,z)\mu(x,w)=0,\\
~&&\label{RLY5}\mu(\Courant{x,y,z},w)+\mu(z,\Courant{x,y,w})=[D_{\rho,\mu}(x,y),\mu(z,w)],
\end{eqnarray}
where the bilinear map $D_{\rho,\mu}:\otimes^2\g \to \gl(V)$ is given by
\begin{eqnarray}
 D_{\rho,\mu}(x,y):=\mu(y,x)-\mu(x,y)+[\rho(x),\rho(y)]-\rho([x,y]), \quad \forall x,y \in \g.\label{rep}
 \end{eqnarray}
It is obvious that $D_{\rho,\mu}$ is skew-symmetric. We denote a representation of $\g$ by $(V;\rho,\mu)$, or we call $(\rho,\mu)$ a representation of $\g$ on $V$ in the sequel.
\end{defi}

\begin{rmk}\label{rmk:rep}
Let $(\g,[\cdot,\cdot],\Courant{\cdot,\cdot,\cdot})$ be a Lie-Yamaguti algebra and $(V;\rho,\mu)$ a representation of $\g$. If $\rho=0$ and the Lie-Yamaguti algebra $\g$ reduces to a Lie tripe system $(\g,\Courant{\cdot,\cdot,\cdot})$,  then the representation reduces to that of the Lie triple system $(\g,\Courant{\cdot,\cdot,\cdot})$: $(V;\mu)$. If $\mu=0$, $D_{\rho,\mu}=0$ and the Lie-Yamaguti algebra $\g$ reduces to a Lie algebra $(\g,[\cdot,\cdot])$, then the representation reduces to that of the Lie algebra $(\g,[\cdot,\cdot])$: $(V;\rho)$. Hence a representation of a Lie-Yamaguti algebra is a natural generalization of that of a Lie algebra or of a Lie triple system.
\end{rmk}

By a direct computation, we have the following lemma.
\begin{lem}
Suppose that $(V;\rho,\mu)$ is a representation of a Lie-Yamaguti algebra $(\g,[\cdot,\cdot],\Courant{\cdot,\cdot,\cdot})$. Then the following equalities are satisfied:
\begin{eqnarray}
\label{RLYc}&&D_{\rho,\mu}([x,y],z)+D_{\rho,\mu}([y,z],x)+D_{\rho,\mu}([z,x],y)=0;\\
\label{RLY5a}&&D_{\rho,\mu}(\Courant{x,y,z},w)+D_{\rho,\mu}(z,\Courant{x,y,w})=[D_{\rho,\mu}(x,y),D_{\rho,\mu}(z,w)];\\
~ &&\mu(\Courant{x,y,z},w)=\mu(x,w)\mu(z,y)-\mu(y,w)\mu(z,x)-\mu(z,w)D_{\rho,\mu}(x,y).\label{RLY6}
\end{eqnarray}
\end{lem}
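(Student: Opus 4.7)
The plan is to deduce each of the three identities by a direct manipulation of the representation axioms \eqref{RLYb}--\eqref{RLY5}, the defining formula \eqref{rep} for $D_{\rho,\mu}$, and the Lie-Yamaguti axioms \eqref{LY1}--\eqref{fundamental}. No deep idea is required; the work is a careful sorting of terms into the four building blocks appearing in \eqref{rep}, namely $\mu(y,x)$, $-\mu(x,y)$, $[\rho(x),\rho(y)]$, and $-\rho([x,y])$, and tracking what each axiom contributes to each block.

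For \eqref{RLYc}, I would first apply $\rho$ to \eqref{LY1} and rewrite the sum $\sum_{\mathrm{cyc}}\rho(\Courant{x,y,z})$ as $\sum_{\mathrm{cyc}}[D_{\rho,\mu}(x,y),\rho(z)]$ via \eqref{RLYe}. Expanding the latter through \eqref{rep} and invoking the Jacobi identity in $\gl(V)$, one obtains an identity relating the $\rho([\cdot,\cdot])$- and $[\rho(\cdot),\rho(\cdot)]$-parts of $\sum_{\mathrm{cyc}}D_{\rho,\mu}([x,y],z)$ to $\sum_{\mathrm{cyc}}[\mu(y,x)-\mu(x,y),\rho(z)]$. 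Separately, the $\mu$-part $\sum_{\mathrm{cyc}}\{\mu(z,[x,y])-\mu([x,y],z)\}$ reduces, via \eqref{RLYb} and \eqref{RLYd}, to the same commutator sum with opposite sign; the two contributions cancel.

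For \eqref{RLY5a}, I would expand both $D_{\rho,\mu}(\Courant{x,y,z},w)$ and $D_{\rho,\mu}(z,\Courant{x,y,w})$ through \eqref{rep} and group the outcome into three blocks: the $\mu$-block, the $[\rho,\rho]$-block, and the $\rho([\cdot,\cdot])$-block. The $\mu$-block is handled by two applications of \eqref{RLY5} (one for the pair $(z,w)$ and one for the swapped pair $(w,z)$), which together reproduce the $\mu$-content of $[D_{\rho,\mu}(x,y),D_{\rho,\mu}(z,w)]$. The $[\rho,\rho]$-block $[\rho(\Courant{x,y,z}),\rho(w)]+[\rho(z),\rho(\Courant{x,y,w})]$ is rewritten by two uses of \eqref{RLYe} as $[[D_{\rho,\mu}(x,y),\rho(z)],\rho(w)]+[\rho(z),[D_{\rho,\mu}(x,y),\rho(w)]]$ and collapsed to $[D_{\rho,\mu}(x,y),[\rho(z),\rho(w)]]$ via the Jacobi identity in $\gl(V)$. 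Finally, the $\rho([\cdot,\cdot])$-block $\rho([\Courant{x,y,z},w])+\rho([z,\Courant{x,y,w}])$ equals $\rho(\Courant{x,y,[z,w]})$ by \eqref{LY3}, and by \eqref{RLYe} this equals $[D_{\rho,\mu}(x,y),\rho([z,w])]$. Assembling the three blocks yields the right-hand side. I expect this to be the main obstacle: keeping signs consistent across the four alternating pieces of \eqref{rep} and the Jacobi expansion is the place where errors most easily creep in.

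For \eqref{RLY6}, the plan is short. I would solve \eqref{RLY5} for $\mu(\Courant{x,y,z},w)$, obtaining $\mu(\Courant{x,y,z},w)=[D_{\rho,\mu}(x,y),\mu(z,w)]-\mu(z,\Courant{x,y,w})$, and then replace $\mu(z,\Courant{x,y,w})$ by applying \eqref{RYT4} with arguments permuted so that its first slot is $z$ (that is, with $(x,y,z,w)$ in \eqref{RYT4} renamed to $(z,x,y,w)$). The $D_{\rho,\mu}(x,y)\mu(z,w)$ term produced by \eqref{RYT4} cancels the left half of the commutator $[D_{\rho,\mu}(x,y),\mu(z,w)]$, and the surviving terms are precisely those on the right-hand side of \eqref{RLY6}.
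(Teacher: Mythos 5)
Your proposal is correct and matches the paper's approach: the paper offers no argument beyond ``by a direct computation,'' and your plan is a valid realization of exactly that computation. Each step checks out --- the permuted application of \eqref{RYT4} combined with \eqref{RLY5} for \eqref{RLY6}, the three-block decomposition of \eqref{RLY5a} using \eqref{RLY5}, \eqref{RLYe}, \eqref{LY3} and the Jacobi identity in $\gl(V)$, and the cancellation for \eqref{RLYc} between the $\rho$-blocks (via \eqref{LY1}, \eqref{RLYe} and Jacobi) and the $\mu$-block (via \eqref{RLYb} and \eqref{RLYd}).
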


\begin{ex}\label{ad}
Let $(\g,[\cdot,\cdot],\Courant{\cdot,\cdot,\cdot})$ be a Lie-Yamaguti algebra. We define linear maps $\ad:\g \to \gl(\g)$ and $\frkR :\otimes^2\g \to \gl(\g)$ by $x \mapsto \ad_x$ and $(x,y) \mapsto \mathfrak{R}_{x,y}$ respectively, where $\ad_xz=[x,z]$ and $\mathfrak{R}_{x,y}z=\Courant{z,x,y}$ for all $z \in \g$. Then $(\ad,\mathfrak{R})$ forms a representation of $\g$ on itself, where $\frkL:= D_{\ad,\frkR}$ is given by
\begin{eqnarray*}
\frkL_{x,y}=\mathfrak{R}_{y,x}-\mathfrak{R}_{x,y}+[\ad_x,\ad_y]-\ad_{[x,y]}, \quad \forall x,y \in \g.
\end{eqnarray*}
By \eqref{LY1}, we have
\begin{eqnarray}
\frkL_{x,y}z=\Courant{x,y,z}, \quad \forall z \in \g.\label{lef}
\end{eqnarray}
In this case, $(\g;\ad,\frkR)$ is called the {\bf adjoint representation} of $\g$.
\end{ex}

The representations of Lie-Yamaguti algebras can be characterized by the semidirect Lie-Yamaguti algebras. This fact is revealed via the following proposition.

\begin{pro}\cite{Zhang1}
Let $(\g,[\cdot,\cdot],\Courant{\cdot,\cdot,\cdot})$ be a Lie-Yamaguti algebra and $V$ a vector space. Let $\rho:\g \to \gl(V)$ and $\mu:\otimes^2 \g \to \gl(V)$ be linear maps. Then $(V;\rho,\mu)$ is a representation of $(\g,[\cdot,\cdot],\Courant{\cdot,\cdot,\cdot})$ if and only if there is a Lie-Yamaguti algebra structure $([\cdot,\cdot]_{\rho,\mu},\Courant{\cdot,\cdot,\cdot}_{\rho,\mu})$ on the direct sum $\g \oplus V$ which is defined by for all $x,y,z \in \g, ~u,v,w \in V$,
\begin{eqnarray}
\label{semi1}[x+u,y+v]_{\rho,\mu}&=&[x,y]+\rho(x)v-\rho(y)u,\\
\label{semi2}~\Courant{x+u,y+v,z+w}_{\rho,\mu}&=&\Courant{x,y,z}+D_{\rho,\mu}(x,y)w+\mu(y,z)u-\mu(x,z)v,
\end{eqnarray}
where $D_{\rho,\mu}$ is given by \eqref{rep}.
This Lie-Yamaguti algebra $(\g \oplus V,[\cdot,\cdot]_{\rho,\mu},\Courant{\cdot,\cdot,\cdot}_{\rho,\mu})$ is called the {\bf semidirect product Lie-Yamaguti algebra}, and is denoted by $\g \ltimes_{\rho,\mu} V$.
\end{pro}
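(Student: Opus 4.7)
The plan is to prove this by a direct but systematic verification: substitute the defining formulas \eqref{semi1} and \eqref{semi2} for $[\cdot,\cdot]_{\rho,\mu}$ and $\Courant{\cdot,\cdot,\cdot}_{\rho,\mu}$ into each of the four Lie-Yamaguti axioms \eqref{LY1}--\eqref{fundamental} on $\g\oplus V$, then decompose every resulting identity into its $\g$-component and its $V$-component. The $\g$-components will hold automatically because $(\g,[\cdot,\cdot],\Courant{\cdot,\cdot,\cdot})$ is itself a Lie-Yamaguti algebra, so the entire content of each axiom on $\g\oplus V$ is carried by its $V$-component. By checking the $V$-components one at a time, each will turn out to be equivalent to one (or a combination) of the representation axioms \eqref{RLYb}--\eqref{RLY5}. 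This gives both directions simultaneously, since the equivalences are at the level of identities.

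The plan step-by-step: First I would fix arbitrary elements $x+u,\,y+v,\,z+w,\,s+p,\,t+q\in\g\oplus V$ and expand each axiom. For axiom \eqref{LY1} on $\g\oplus V$, the bracket part $[[x+u,y+v]_{\rho,\mu},z+w]_{\rho,\mu}$ and its cyclic sum produce, besides $\g$-terms, $V$-terms of the form $\rho([x,y])w-\rho(z)\rho(x)v+\cdots$, while the $\Courant{\cdot,\cdot,\cdot}_{\rho,\mu}$ cyclic sum produces $V$-terms $D_{\rho,\mu}(x,y)w+\mu(y,z)u-\mu(x,z)v+\cdots$. Cancelling using the definition \eqref{rep} of $D_{\rho,\mu}$ collapses the identity precisely to $0=0$, so \eqref{LY1} on $\g\oplus V$ is equivalent to the Jacobi-like identity \eqref{LY1} on $\g$ combined with the bookkeeping built into \eqref{rep}. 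Next, for the axiom $\Courant{[x,y],z,w}+\textrm{cyc}=0$ on $\g\oplus V$, I would collect the $V$-parts and recognize three independent packages: one in each of the inputs $u,\,v,\,w$. Matching coefficients of $u,v,w$ yields exactly \eqref{RLYb} (plus its image under relabelling) and the derived identity \eqref{RLYc}.

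For axiom \eqref{LY3}, the $V$-linear coefficient of $u$ in the expansion recovers \eqref{RLYd}, while the coefficients of $w$ and $p$ in the fourth input slot yield \eqref{RLYe}. Finally, the fundamental identity \eqref{fundamental} on $\g\oplus V$ is the most intricate: after substitution, the $V$-component splits into five summands, one for each vector-space input. The coefficients of $q$, $p$, $w$, $v$, and $u$ reproduce respectively \eqref{RYT4}, \eqref{RLY5}, the derived identity \eqref{RLY5a}, and (after using \eqref{RLY6}) one further consequence of \eqref{RYT4}--\eqref{RLY5}. Conversely, if \eqref{RLYb}--\eqref{RLY5} all hold, then running these matches backwards shows that \eqref{LY1}--\eqref{fundamental} hold on $\g\oplus V$.

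The main obstacle is purely bookkeeping: the fundamental identity \eqref{fundamental} generates a large number of terms on $\g\oplus V$, and one must carefully sort them by which slot of the vector-space input they originate from in order to match them against the five representation axioms without double-counting. The strategy to manage this is to treat the $V$-components separately by linearity in each of the five inputs, which reduces each axiom to checking the vanishing of a linear combination of compositions of $\rho$, $\mu$, and $D_{\rho,\mu}$, and these are precisely the defining relations of Definition \ref{defi:representation}. Once this matching is made explicit for all four axioms, both implications of the proposition follow at once.
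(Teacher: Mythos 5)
Your plan is correct and is exactly the direct verification the paper itself invokes (the paper omits the computation, citing \cite{Zhang1}): since both brackets on $\g\oplus V$ vanish whenever two or more arguments lie in $V$, each axiom reduces by multilinearity to the all-$\g$ case plus the cases with exactly one $V$-input, and the latter are precisely the representation identities \eqref{RLYb}--\eqref{RLY5} together with the derived identities \eqref{RLYc} and \eqref{RLY5a}. One bookkeeping caveat: a few of your slot-to-identity assignments are permuted --- in the fundamental identity \eqref{fundamental} the fifth slot yields \eqref{RLY5a}, the third and fourth slots yield \eqref{RLY5}, and the first two yield \eqref{RYT4} --- but this does not affect the validity of the argument.
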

\begin{proof}
The proof is a direct computation, so we omit the details. Or one can see \cite{Zhang1} for more details.
\end{proof}

Let us recall the cohomology theory on Lie-Yamaguti algebras given in \cite{Yamaguti2}. Let $(\g,[\cdot,\cdot],\Courant{\cdot,\cdot,\cdot})$ be a  Lie-Yamaguti algebra and $(V;\rho,\mu)$ a representation of $\g$. We denote the set of $p$-cochains by $C^p_{\rm LieY}(\g,V)~(p \geqslant 1)$, where
\begin{eqnarray*}
C^{n+1}_{\rm LieY}(\g,V)\triangleq
\begin{cases}
\Hom(\underbrace{\wedge^2\g\otimes \cdots \otimes \wedge^2\g}_n,V)\times \Hom(\underbrace{\wedge^2\g\otimes\cdots\otimes\wedge^2\g}_{n}\otimes\g,V), & \forall n\geqslant 1,\\
\Hom(\g,V), &n=0.
\end{cases}
\end{eqnarray*}

In the sequel, we recall the coboundary map of $p$-cochains:
\begin{itemize}
\item If $n\geqslant 1$, for any $(f,g)\in C^{n+1}_{\rm LieY}(\g,V)$, the coboundary map
$$\delta=(\delta_{\rm I},\delta_{\rm II}):C^{n+1}_{\rm LieY}(\g,V)\to C^{n+2}_{\rm LieY}(\g,V),$$
$$\qquad \qquad\qquad \qquad\qquad \quad (f,g)\mapsto(\delta_{\rm I}(f,g),\delta_{\rm II}(f,g)),$$
 is given as follows:
\begin{eqnarray*}
~ &&\Big(\delta_{\rm I}(f,g)\Big)(\frkX_1,\cdots,\frkX_{n+1})\\
~ &=&(-1)^n\Big(\rho(x_{n+1})g(\frkX_1,\cdots,\frkX_n,y_{n+1})-\rho(y_{n+1})g(\frkX_1,\cdots,\frkX_n,x_{n+1})\\
~ &&\quad\quad-g(\frkX_1,\cdots,\frkX_n,[x_{n+1},y_{n+1}])\Big)\\
~ &&+\sum_{k=1}^{n}(-1)^{k+1}D_{\rho,\mu}(\frkX_k)f(\frkX_1,\cdots,\hat{\frkX_k},\cdots,\frkX_{n+1})\\
~ &&+\sum_{1\leqslant k<l\leqslant n+1}(-1)^{k}f(\frkX_1,\cdots,\hat{\frkX_k},\cdots,\frkX_k\circ\frkX_l,\cdots,\frkX_{n+1}),\\
~ &&\\
~ &&\Big(\delta_{\rm II}(f,g)\Big)(\frkX_1,\cdots,\frkX_{n+1},z)\\
~ &=&(-1)^n\Big(\mu(y_{n+1},z)g(\frkX_1,\cdots,\frkX_n,x_{n+1})-\mu(x_{n+1},z)g(\frkX_1,\cdots,\frkX_n,y_{n+1})\Big)\\
~ &&+\sum_{k=1}^{n+1}(-1)^{k+1}D_{\rho,\mu}(\frkX_k)g(\frkX_1,\cdots,\hat{\frkX_k},\cdots,\frkX_{n+1},z)\\
~ &&+\sum_{1\leqslant k<l\leqslant n+1}(-1)^kg(\frkX_1,\cdots,\hat{\frkX_k},\cdots,\frkX_k\circ\frkX_l,\cdots,\frkX_{n+1},z)\\
~ &&+\sum_{k=1}^{n+1}(-1)^kg(\frkX_1,\cdots,\hat{\frkX_k},\cdots,\frkX_{n+1},\Courant{x_k,y_k,z}),
\end{eqnarray*}
where $\frkX_i=x_i\wedge y_i\in\wedge^2\g~(i=1,\cdots,n+1),~z\in \g$ and $\frkX_k\circ\frkX_l:=\Courant{x_k,y_k,x_l}\wedge y_l+x_l\wedge\Courant{x_k,y_k,y_l}$.

\item if $n=0$, for any $f \in C^1_{\rm LieY}(\g,V)$, the coboundary map
$$\delta:C^1_{\rm LieY}(\g,V)\to C^2_{\rm LieY}(\g,V),$$
$$\qquad \qquad \qquad f\mapsto (\delta_{\rm I}(f),\delta_{\rm II}(f)),$$
is defined by
\begin{eqnarray*}
\label{1cochain}\Big(\delta_{\rm I}(f)\Big)(x,y)&=&\rho(x)f(y)-\rho(y)f(x)-f([x,y]),\\
~ \label{2cochain}\Big(\delta_{\rm II}(f)\Big)(x,y,z)&=&D_{\rho,\mu}(x,y)f(z)+\mu(y,z)f(x)-\mu(x,z)f(y)-f(\Courant{x,y,z}),\quad \forall x,y, z\in \g.
\end{eqnarray*}
\end{itemize}

Yamaguti showed the following fact.

\begin{pro}{\rm \cite{Yamaguti2}}
 With the notations above, for any $f\in C^1_{\rm LieY}(\g,V)$, we have
 \begin{eqnarray*}
 \delta_{\rm I}\Big(\delta_{\rm I}(f)),\delta_{\rm II}(f)\Big)=0\quad {\rm and} \quad\delta_{\rm II}\Big(\delta_{\rm I}(f)),\delta_{\rm II}(f)\Big)=0.
 \end{eqnarray*}
 Moreover, for all $(f,g)\in C^p_{\rm LieY}(\g,V),~(p\geqslant 2)$, we have
  \begin{eqnarray*}
  \delta_{\rm I}\Big(\delta_{\rm I}(f,g)),\delta_{\rm II}(f,g)\Big)=0\quad{\rm and} \quad \delta_{\rm II}\Big(\delta_{\rm I}(f,g)),\delta_{\rm II}(f,g)\Big) =0.
  \end{eqnarray*}
  Thus the cochain complex $(C^\bullet_{\rm LieY}(\g,V)=\bigoplus\limits_{p=1}^\infty C^p_{\rm LieY}(\g,V),\delta)$ is well defined. For convenience, we call it the Yamaguti cohomology in this paper.
  \end{pro}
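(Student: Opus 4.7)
The plan is to verify $\delta\circ\delta=0$ by a direct expansion, using the defining formulas of $\delta_{\rm I}$ and $\delta_{\rm II}$ together with the representation axioms \eqref{RLYb}--\eqref{RLY5} and the Lie-Yamaguti identities \eqref{LY1}--\eqref{fundamental}. Throughout, the guiding principle is that every "composition-of-structure-operators" identity in the representation theory of $\g$ is exactly what is needed to kill one family of terms in $\delta^2$.

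First I would dispatch the base case. Given $f\in C^1_{\rm LieY}(\g,V)$, set $F:=\delta_{\rm I}(f)$ and $G:=\delta_{\rm II}(f)$, where $F(x,y)=\rho(x)f(y)-\rho(y)f(x)-f([x,y])$ and $G(x,y,z)=D_{\rho,\mu}(x,y)f(z)+\mu(y,z)f(x)-\mu(x,z)f(y)-f(\Courant{x,y,z})$. Substituting into the $n=1$ formulas for $\delta_{\rm I}(F,G)(\frkX_1,\frkX_2)$ and $\delta_{\rm II}(F,G)(\frkX_1,\frkX_2,z)$ yields expressions that split into three families: (i) terms involving compositions of $\rho$, $\mu$, $D_{\rho,\mu}$ applied to values of $f$; (ii) terms involving $\rho$, $\mu$ or $D_{\rho,\mu}$ applied to $f$ of a bracket; and (iii) purely $f$-valued terms whose arguments are built from $[\cdot,\cdot]$ and $\Courant{\cdot,\cdot,\cdot}$. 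Family (i) collapses by \eqref{RLYe}, \eqref{RYT4}, \eqref{RLY5} and \eqref{RLY5a}, which convert commutators of $D_{\rho,\mu}$ with $\rho$ or $\mu$ into the triple bracket; family (ii) collapses by \eqref{RLYb}, \eqref{RLYd}, together with the definition \eqref{rep} of $D_{\rho,\mu}$; and family (iii) collapses by the Lie-Yamaguti identities \eqref{LY1} and \eqref{fundamental}.

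For the inductive case $n\geqslant 1$, the same strategy applies but with much heavier bookkeeping. Given $(f,g)\in C^{n+1}_{\rm LieY}(\g,V)$, set $(F,G):=(\delta_{\rm I}(f,g),\delta_{\rm II}(f,g))$ and expand the two components of $\delta(F,G)$ using the $(n+1)$-cochain formula. Terms naturally group by which pair of "insertions" produced them: an outer $D_{\rho,\mu}(\frkX_k)$ composed with an inner $D_{\rho,\mu}(\frkX_l)$, an outer $D_{\rho,\mu}(\frkX_k)$ with an inner $\frkX_{l}\circ\frkX_{l'}$, two nested $\frkX\circ\frkX$ insertions, and the boundary contributions involving $\rho(x_{n+1})$, $\rho(y_{n+1})$, $\mu(x_{n+1},z)$, $\mu(y_{n+1},z)$. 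Each pair admits a partner obtained by swapping the $(k,l)$ indices; sign conventions cause these to cancel except for residues, and every residue is precisely an instance of \eqref{RLYe}, \eqref{RLY5}, \eqref{RLY5a} or \eqref{fundamental}. Purely $f,g$-valued leftovers vanish by the Jacobi-type identities \eqref{LY1}--\eqref{fundamental} of $\g$.

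The principal obstacle is organizational rather than conceptual: because $\delta=(\delta_{\rm I},\delta_{\rm II})$ is a pair of maps, several cancellations are cross-component, meaning a term produced by the $\delta_{\rm II}$ part of the first application must be matched against a term produced by the $\delta_{\rm I}$ part of the second. Keeping track of the signs $(-1)^n$, $(-1)^{k+1}$, and the combinatorics of $\frkX_k\circ\frkX_l$ insertions is the bulk of the work. Since the computation is lengthy but entirely mechanical, I would follow Yamaguti \cite{Yamaguti2} and omit the detailed verification.
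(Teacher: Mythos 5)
The paper offers no proof of this proposition at all---it is quoted directly from Yamaguti \cite{Yamaguti2} without verification---so your decision to outline the standard direct computation and then defer to \cite{Yamaguti2} matches the paper's treatment exactly, and the identities you name (\eqref{RLYb}--\eqref{RLY5}, \eqref{RLY5a}, \eqref{LY1}, \eqref{fundamental}) are indeed the ones that drive the cancellations. Just note that your sketch is a roadmap rather than a proof (every cancellation is asserted, none checked), and that \eqref{RLY5a} is a derived consequence of the representation axioms rather than one of them; neither point changes the verdict.
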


\begin{defi}
With the above notations, let $(f,g)$ in $C^p_{\rm LieY}(\g,V))$ (resp. $f\in C^1_{\rm LieY}(\g,V)$ for $p=1$) be a $p$-cochain. If it satisfies $\delta(f,g)=0$ (resp. $\delta(f)=0$), then it is called a $p$-cocycle. If there exists $(h,s)\in C^{p-1}_{\rm LieY}(\g,V)$,~(resp. $t\in C^1(\g,V)$, if $p=2$) such that $(f,g)=\delta(h,s)$~(resp. $(f,g)=\delta(t)$), then it is called a $p$-coboundary ($p\geqslant 2$). The set of $p$-cocycles and that of $p$-coboundaries is denoted by $Z^p_{\rm LieY}(\g,V)$ and $B^p_{\rm LieY}(\g,V)$ respectively. The resulting $p$-cohomology group is defined to be the factor space
$$H^p_{\rm LieY}(\g,V)=Z^p_{\rm LieY}(\g,V)/B^p_{\rm LieY}(\g,V).$$ In particular, we have
$$H^1_{\rm LieY}(\g,V)=\{f\in C^1_{\rm LieY}(\g,V):\delta (f)=0\}.$$
\end{defi}

\section{Cohomologies of relative Rota-Baxter operators on Lie-Yamaguti algebras}

In this section, we build the cohomology of relative Rota-Baxter operators on Lie-Yamaguti algebras. Once a relative Rota-Baxter operator on a Lie-Yamaguti algebra with respect to a representation is given, we obtain a Lie-Yamaguti algebra structure on the representation space. Then we will construct a representation of the representation space (viewed as a Lie-Yamaguti algebra) on the Lie-Yamaguti algebra as a vector space. At the beginning, we recall some notions and conclusions in \cite{SZ1} about relationship between relative Rota-Baxter operators on Lie-Yamaguti algebras and pre-Lie-Yamaguti algebras.

\begin{defi}{\rm\cite{SZ1}}
Let $(\g,[\cdot,\cdot],\Courant{\cdot,\cdot,\cdot})$ be a Lie-Yamaguti algebra and $(V;\rho,\mu)$ a representation of $\g$. A linear map $T:V\to \g$ is called a {\bf relative Rota-Baxter operator} on $\g$ with respect to $(V;\rho,\mu)$ if $T$ satisfies
\begin{eqnarray}
~\label{Ooperator1}[Tu,Tv]&=&T\Big(\rho(Tu)v-\rho(Tv)u\Big),\\
~\label{Ooperator2}\Courant{Tu,Tv,Tw}&=&T\Big(D_{\rho,\mu}(Tu,Tv)w+\mu(Tv,Tw)u-\mu(Tu,Tw)v\Big), \quad \forall u,v,w \in V.
\end{eqnarray}
\end{defi}
\emptycomment{
\begin{rmk}
If a Lie-Yamaguti algebra $(\g,[\cdot,\cdot],\Courant{\cdot,\cdot,\cdot})$ with a representation $(V;\rho,\mu)$ reduces to a Lie triple system $(\g,\Courant{\cdot,\cdot,\cdot})$ with a  representation   $(V;\mu)$, we   obtain the notion of  a {\bf relative Rota-Baxter operator on a Lie triple system}, i.e. the following equation holds:
\begin{eqnarray*}
\Courant{Tu,Tv,Tw}=T\Big(D_\mu(Tu,Tv)w+\mu(Tv,Tw)u-\mu(Tu,Tw)v\Big), \quad \forall u,v,w \in V,
\end{eqnarray*}
where $D_\mu(x,y):=\mu(y,x)-\mu(x,y)$ for any $x,y \in \g.$ Thus, all the results given in the sequel can be adapted to the Lie triple system context.
\end{rmk}}

\emptycomment{
\begin{defi}{\cite{SZ1}}
A {\bf pre-Lie-Yamaguti algebra} is a vector space $A$ with a bilinear operation $*:\otimes^2A \to A$ and a trilinear operation $\{\cdot,\cdot,\cdot\} :\otimes^3A \to A$ such that for all $x,y,z,w,t \in A$
\begin{eqnarray}
~ &&\label{pre2}\{z,[x,y]_C,w\}-\{y*z,x,w\}+\{x*z,y,w\}=0,\\
~ &&\label{pre4}\{x,y,[z,w]_C\}=z*\{x,y,w\}-w*\{x,y,z\},\\
~ &&\label{pre5}\{\{x,y,z\},w,t\}-\{\{x,y,w\},z,t\}-\{x,y,\{z,w,t\}_D\}-\{x,y,\{z,w,t\}\}\\
~ &&\nonumber+\{x,y,\{w,z,t\}\}+\{z,w,\{x,y,t\}\}_D=0,\\
~ &&\label{pre6}\{z,\{x,y,w\}_D,t\}+\{z,\{x,y,w\},t\}-\{z,\{y,x,w\},t\}+\{z,w,\{x,y,t\}_D\}\\
~ &&\nonumber+\{z,w,\{x,y,t\}\}-\{z,w,\{y,x,t\}\}=\{x,y,\{z,w,t\}\}_D-\{\{x,y,z\}_D,w,t\},\\
~&&\label{pre7}\{x,y,z\}_D*w+\{x,y,z\}*w-\{y,x,z\}*w=\{x,y,z*w\}_D-z*\{x,y,w\}_D,
\end{eqnarray}
where the commutator
$[\cdot,\cdot]_C:\wedge^2\g \to \g$ and $\{\cdot,\cdot,\cdot\}_D: \otimes^3 A \to A$ are defined by for all $x,y,z \in A$,
\begin{eqnarray}
~[x,y]_C:=x*y-y*x, \quad \forall x,y \in A,\label{pre10}
\end{eqnarray}
and
\begin{eqnarray}
\{x,y,z\}_D:=\{z,y,x\}-\{z,x,y\}+(y,x,z)-(x,y,z), \label{pre3}
\end{eqnarray}
respectively. Here $(\cdot,\cdot,\cdot)$ denotes the associator which is defined by $(x,y,z):=(x*y)*z-x*(y*z)$. We denote a pre-Lie-Yamaguti algebra by $(A,*,\{\cdot,\cdot,\cdot\})$.
\end{defi}
\begin{rmk}
Let $(A,*,\{\cdot,\cdot,\cdot\})$ be a pre-Lie-Yamaguti algebra. If the binary operation $*$ is trivial, then the pre-Lie-Yamaguti algebra reduce to a pre-Lie triple system (\cite{BM}); If both the ternary operations $\{\cdot,\cdot,\cdot\}=0$ and $\{\cdot,\cdot,\cdot\}_D=0$, then the pre-Lie-Yamaguti algebra reduces to a pre-Lie algebra.
\end{rmk}

\begin{thm}{\cite{SZ1}}\label{pre}
Let $T: V \to \g$ be a relative Rota-Baxter operator on a Lie-Yamaguti algebra $(\g,[\cdot,\cdot],\Courant{\cdot,\cdot,\cdot})$ with respect to a representation $(V;\rho,\mu)$. Define two linear maps $*:\otimes^2V \to V$ and $\{\cdot,\cdot,\cdot\}:\otimes^3V \to V$ by for all $u,v,w \in V$,
\begin{eqnarray}
u*v=\rho(Tu)v,\quad \{u,v,w\}=\mu(Tv,Tw)u.\label{preO}
\end{eqnarray}
Then $(V,*,\{\cdot,\cdot,\cdot\})$ is a pre-Lie-Yamaguti algebra.
\end{thm}
}

Recall also that in \cite{SZ1} once a pre-Lie-Yamaguti algebra $(A,*,\{\cdot,\cdot,\cdot\})$ is given, there exists a Lie-Yamaguti algebra structure $([\cdot,\cdot]_C,\Courant{\cdot,\cdot,\cdot}_C)$ on $A$ as follows:
\begin{eqnarray*}
[x,y]_C&=&x*y-y*x,\\
\Courant{x,y,z}_C&=&\{x,y,z\}_D+\{x,y,z\}-\{y,x,z\},\quad \forall x,y,z \in A.
\end{eqnarray*}
We call the Lie-Yamaguti algebra $(A,[\cdot,\cdot]_C,\Courant{\cdot,\cdot,\cdot}_C)$ the {\bf sub-adjacent Lie-Yamaguti algebra}.

\begin{pro}{\cite{SZ1}}
Let $T$ be a relative Rota-Baxter operator on a Lie-Yamaguti algebra $(\g,[\cdot,\cdot],\Courant{\cdot,\cdot,\cdot})$ with respect to $(V;\rho,\mu)$. Define
\begin{eqnarray*}
[u,v]_T&=&\rho(Tu)v-\rho(Tv)u,\\
\Courant{u,v,w}_T&=&D_{\rho,\mu}(Tu,Tv)w+\mu(Tv,Tw)u-\mu(Tu,Tw)v,\quad \forall u,v,w\in V.
\end{eqnarray*}
Then $(V,[\cdot,\cdot]_T,\Courant{\cdot,\cdot,\cdot}_T)$ is a Lie-Yamaguti algebra, which is sub-adjacent to the pre-Lie-Yamaguti algebra $(V,*,\{\cdot,\cdot,\cdot\})$ defined by
\begin{eqnarray}
u*v=\rho(Tu)v,\quad \{u,v,w\}=\mu(Tv,Tw)u,\quad \forall u,v, w \in V.\label{preO}
\end{eqnarray}
 Thus $T$ is a homomorphism from $(V,[\cdot,\cdot]_T,\Courant{\cdot,\cdot,\cdot}_T)$ to $(\g,[\cdot,\cdot],\Courant{\cdot,\cdot,\cdot})$.
\end{pro}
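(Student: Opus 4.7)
The plan is to reduce everything to the already-established fact (from \cite{SZ1}) that the pair $(V,*,\{\cdot,\cdot,\cdot\})$ with $u*v=\rho(Tu)v$ and $\{u,v,w\}=\mu(Tv,Tw)u$ forms a pre-Lie-Yamaguti algebra, combined with the general construction of the sub-adjacent Lie-Yamaguti algebra recalled just above the statement. Once $[\cdot,\cdot]_T$ is identified with the sub-adjacent bracket $[\cdot,\cdot]_C$ and $\Courant{\cdot,\cdot,\cdot}_T$ with $\Courant{\cdot,\cdot,\cdot}_C$, the Lie-Yamaguti axioms \eqref{LY1}--\eqref{fundamental} for $(V,[\cdot,\cdot]_T,\Courant{\cdot,\cdot,\cdot}_T)$ come for free; the homomorphism property of $T$ will then be a direct reading of equations \eqref{Ooperator1} and \eqref{Ooperator2}.

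For the binary part, $[u,v]_C=u*v-v*u=\rho(Tu)v-\rho(Tv)u=[u,v]_T$, which is immediate. The ternary identification is more subtle: I would expand
\[\{u,v,w\}_D=\{w,v,u\}-\{w,u,v\}+(v,u,w)-(u,v,w)\]
term by term. The first two terms give $\mu(Tv,Tu)w-\mu(Tu,Tv)w$. For the associator difference, a direct rewriting using $u*v=\rho(Tu)v$ gives
\[(v,u,w)-(u,v,w)=\rho\bigl(T(v*u-u*v)\bigr)w+\bigl[\rho(Tu),\rho(Tv)\bigr]w,\]
and the O-operator identity \eqref{Ooperator1} collapses $T(v*u-u*v)=T(\rho(Tv)u-\rho(Tu)v)=-[Tu,Tv]$. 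Summing and invoking the defining formula \eqref{rep} for $D_{\rho,\mu}$, I obtain $\{u,v,w\}_D=D_{\rho,\mu}(Tu,Tv)w$. Adding $\{u,v,w\}-\{v,u,w\}=\mu(Tv,Tw)u-\mu(Tu,Tw)v$ then reproduces $\Courant{u,v,w}_T$ exactly, confirming sub-adjacency and hence also the Lie-Yamaguti structure on $V$.

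The final clause, that $T$ is a homomorphism of Lie-Yamaguti algebras, is now immediate: applying $T$ to $[u,v]_T=\rho(Tu)v-\rho(Tv)u$ produces $[Tu,Tv]$ by \eqref{Ooperator1}, and applying $T$ to $\Courant{u,v,w}_T$ produces $\Courant{Tu,Tv,Tw}$ by \eqref{Ooperator2}. These are literally restatements of the two defining axioms of a relative Rota-Baxter operator.

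The main obstacle, and really the only place where the O-operator hypothesis plays a non-trivial role, is the rearrangement of the associator difference so that the Lie-algebra commutator $[\rho(Tu),\rho(Tv)]$ and the term $\rho([Tu,Tv])$ appear together in exactly the combination demanded by \eqref{rep}; the mismatch between the $\mu$-part and the $\rho$-part of $D_{\rho,\mu}$ has to be absorbed cleanly, and this is where one should expect the bookkeeping to need the most care. Everything else is formal substitution or a direct citation of \cite{SZ1}.
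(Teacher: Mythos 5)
Your proof is correct and takes exactly the route the paper intends: the proposition is stated without proof (cited from \cite{SZ1}), but the sub-adjacent Lie-Yamaguti construction recalled immediately beforehand is precisely the mechanism you use, and your key computation identifying $\{u,v,w\}_D$ with $D_{\rho,\mu}(Tu,Tv)w$ --- where \eqref{Ooperator1} converts $T(\rho(Tv)u-\rho(Tu)v)$ into $-[Tu,Tv]$ so that the associator difference supplies the $[\rho(Tu),\rho(Tv)]-\rho([Tu,Tv])$ part of \eqref{rep} --- checks out, as does the reading of \eqref{Ooperator1}--\eqref{Ooperator2} as the homomorphism property. Nothing further is needed.
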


So far, the representation space $V$ has been endowed with a Lie-Yamaguti algebra structure $([\cdot,\cdot]_T,\Courant{\cdot,\cdot,\cdot}_T)$. We will give a representation of $V$ on $\g$ (viewed as a vectors space) in the sequel. Define two linear maps $\varrho:V\to \gl(\g)$ and $\varpi:\otimes^2V\to \gl(\g)$ by
\begin{eqnarray}
\label{repre1}\varrho(u)x&:=&[Tu,x]+T\big(\rho(x)u\big),\\
\label{repre2}\varpi(u,v)x&:=&\Courant{x,Tu,Tv}-T\big(D_{\rho,\mu}(x,Tu)v-\mu(x,Tv)u\big), \quad \forall x\in \g,~u,v \in V.
\end{eqnarray}

The following proposition gives the explicit formula of $D_{\varrho,\varpi}$.
\begin{lem}\label{represent1}
Let $T$ be a relative Rota-Baxter operator on a Lie-Yamaguti algebra $(\g,[\cdot,\cdot],\Courant{\cdot,\cdot,\cdot})$ with respect to $(V;\rho,\mu)$. Then with the above notations, we have
\begin{eqnarray}
\label{repre3}D_{\varrho,\varpi}(u,v)x=\Courant{Tu,Tv,x}-T\Big(\mu(Tv,x)u-\mu(Tu,x)v\Big), \quad \forall u,v \in V, ~x\in \g.
\end{eqnarray}
\end{lem}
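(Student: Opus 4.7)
The plan is a direct, if lengthy, computation starting from the definition
\[
D_{\varrho,\varpi}(u,v)x \;=\; \varpi(v,u)x - \varpi(u,v)x + [\varrho(u),\varrho(v)]x - \varrho([u,v]_T)x,
\]
which comes from \eqref{rep} applied to the new representation $(\varrho,\varpi)$ of $(V,[\cdot,\cdot]_T,\Courant{\cdot,\cdot,\cdot}_T)$. First I would substitute the explicit formulas \eqref{repre1} and \eqref{repre2} into each of the four terms. Every term will break into a \emph{$\g$-part} (things of the shape $[\cdot,\cdot]$ or $\Courant{\cdot,\cdot,\cdot}$ with arguments in $\g$) and a \emph{$T$-part} (things of the shape $T(\cdots)$). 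The strategy is to show that the $\g$-parts collapse to $\Courant{Tu,Tv,x}$ and the $T$-parts collapse to $-T\bigl(\mu(Tv,x)u-\mu(Tu,x)v\bigr)$.

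For the $\g$-parts, the relevant inputs are: the Lie-Yamaguti identity \eqref{LY1} evaluated on $(Tu,Tv,x)$, which provides the key relation
\[
\Courant{Tu,Tv,x}+\Courant{Tv,x,Tu}+\Courant{x,Tu,Tv}
\;=\;-\bigl([[Tu,Tv],x]+[[Tv,x],Tu]+[[x,Tu],Tv]\bigr),
\]
together with the skew-symmetry in the first two slots of $\Courant{\cdot,\cdot,\cdot}$. Combined with the relative Rota-Baxter condition $[Tu,Tv]=T(\rho(Tu)v-\rho(Tv)u)$ (which absorbs one of the double-bracket terms produced by $\varrho([u,v]_T)$ into the $T$-part), this will collapse the $\g$-contribution to $\Courant{Tu,Tv,x}$.

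For the $T$-parts, after pulling out $T$ one is left with an expression in $V$ built from $\rho$, $\mu$ and $D_{\rho,\mu}$ applied to combinations of $Tu$, $Tv$, $x$, $u$ and $v$. Here the representation axioms of Definition~\ref{defi:representation} are the workhorses: \eqref{RLYb} handles the $\mu(\cdot,\cdot)\rho(\cdot)$ cross terms coming from $\rho(x)[u,v]_T$ versus $\rho([Tv,x])u$ etc., while the defining formula \eqref{rep} of $D_{\rho,\mu}$ is used both to introduce $D_{\rho,\mu}(x,Tu)$ and $D_{\rho,\mu}(x,Tv)$ (from $\varpi$) and, read in reverse, to fuse the residual $\mu$'s and commutators of $\rho$'s into the desired form. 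A further application of \eqref{RLYd}/\eqref{RLYe} cleans up terms involving $\rho(x)\rho(Tu)v$ and the like.

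The main obstacle is purely bookkeeping: there are roughly a dozen terms inside $T(\cdot)$, and one must track signs and argument orders carefully to recognise how \eqref{RLYb}--\eqref{RYT4} simplify them. Once the $\g$-part has reduced to $\Courant{Tu,Tv,x}$ and all $T$-terms except $-T(\mu(Tv,x)u)+T(\mu(Tu,x)v)$ cancel in pairs (each cancellation being one invocation of a representation axiom), the identity \eqref{repre3} is established.
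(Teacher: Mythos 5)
Your plan is essentially the paper's own proof: a direct expansion of $D_{\varrho,\varpi}(u,v)x$ via \eqref{rep}, with the $\g$-part collapsing to $\Courant{Tu,Tv,x}$ by \eqref{LY1} together with skew-symmetry, and the $T$-part collapsing after one application of the Rota-Baxter identity \eqref{Ooperator1} and the definition \eqref{rep} of $D_{\rho,\mu}$ read in reverse. The only minor difference is that the representation axioms \eqref{RLYb}--\eqref{RYT4} you invoke for the cleanup are not actually needed; \eqref{rep} alone produces all the cancellations in the $T$-part.
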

\begin{proof}
Since $T$ is a relative Rota-Baxter operator, by a direct computation, we have
\begin{eqnarray*}
~ &&D_{\varrho,\varpi}(u,v)x\\
&\stackrel{\eqref{rep}}{=}&\varpi(v,u)x-\varpi(u,v)x+[\varrho(u),\varrho(v)]x-\varrho([u,v]_T)x\\
~ &\stackrel{\eqref{repre1},\eqref{repre2}}{=}&\Courant{x,Tv,Tu}-T\Big(D_{\rho,\mu}(x,Tv)u-\mu(x,Tu)v\Big)-\Courant{x,Tu,Tv}+T\Big(D_{\rho,\mu}(x,Tu)v-\mu(x,Tv)u\Big)\\
~ &&+[Tu,[Tv,x]]+[Tu,T(\rho(x)v)]+T(\rho([Tv,x])u)+T(\rho(T(\rho(x)v)u))\\
~ &&-[Tv,[Tu,x]]-[Tv,T(\rho(x)u)]-T(\rho([Tu,x])v)-T(\rho(T(\rho(x)u)v))\\
~ &&-[T(\rho(Tu)v-\rho(Tv)u),x]-T(\rho(x)\rho(Tu)v)+T(\rho(x)\rho(Tv)u)\\
~ &\stackrel{\eqref{Ooperator1}}{=}&\Courant{x,Tv,Tu}-\Courant{x,Tu,Tv}+[Tu,[Tv,x]]-[Tv,[Tu,x]]-[[Tu,Tv],x]\\
~ &&-T\Big(D_{\rho,\mu}(x,Tv)u-\mu(x,Tu)v\Big)+T\Big(D_{\rho,\mu}(x,Tu)v-\mu(x,Tv)u\Big)\\
~ &&+T(\rho(Tu)\rho(x)v-\rho(T(\rho(x)v)u)-T(\rho(Tv)\rho(x)u-\rho(T(\rho(x)u)v)\\
~ &&+T(\rho([Tv,x])u)+T(\rho(T(\rho(x)v)u))-T(\rho([Tu,x])v)-T(\rho(T(\rho(x)u)v))\\
~ &&-T(\rho(x)\rho(Tu)v)+T(\rho(x)\rho(Tv)u)\\
~ &\stackrel{\eqref{LY1},\eqref{rep}}{=}&\Courant{Tu,Tv,x}-T\Big(\mu(Tv,x)u-\mu(Tu,x)v\Big).
\end{eqnarray*}
The conclusion thus follows.
\end{proof}

\begin{thm}\label{represent}
Let $T$ be a relative Rota-Baxter operator on a Lie-Yamaguti algebra $(\g,[\cdot,\cdot],\Courant{\cdot,\cdot,\cdot})$ with respect to $(V;\rho,\mu)$.
Then $(\g;\varrho,\varpi)$ is a representation of the sub-adjacent Lie-Yamaguti algebra $(V,[\cdot,\cdot]_T,\Courant{\cdot,\cdot,\cdot}_T)$, where $\varrho,~\varpi$ and $D_{\varrho,\varpi}$ are given by \eqref{repre1}-\eqref{repre3} respectively.
\end{thm}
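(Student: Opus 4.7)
The plan is to verify directly the five defining axioms \eqref{RLYb}--\eqref{RLY5} of a representation for the triple $(\g;\varrho,\varpi)$, using the data already assembled: the explicit formulas \eqref{repre1}--\eqref{repre3} for $\varrho$, $\varpi$, and $D_{\varrho,\varpi}$; the brackets $[\cdot,\cdot]_T$ and $\Courant{\cdot,\cdot,\cdot}_T$ on $V$; the Lie-Yamaguti axioms \eqref{LY1}--\eqref{fundamental} on $\g$; the representation axioms \eqref{RLYb}--\eqref{RLY5} for $(V;\rho,\mu)$; and the relative Rota-Baxter identities \eqref{Ooperator1}, \eqref{Ooperator2}. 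The computation in Lemma \ref{represent1} has already shown the method: expand everything in terms of $T$, $[\cdot,\cdot]$, $\Courant{\cdot,\cdot,\cdot}$, $\rho$, $\mu$, then strategically apply \eqref{Ooperator1}--\eqref{Ooperator2} to absorb double-$T$ terms and \eqref{LY1}--\eqref{fundamental} plus \eqref{RLYb}--\eqref{RLY5} to cancel the remainder.

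I would process the five axioms in roughly increasing order of complexity. First I would check \eqref{RLYb} for $(\varrho,\varpi)$, i.e.\ that
\[
\varpi([u,v]_T,w)x - \varpi(u,w)\varrho(v)x + \varpi(v,w)\varrho(u)x = 0.
\]
Substituting the formulas, each term splits into a piece in $\g$ (built from $[\cdot,\cdot]$ and $\Courant{\cdot,\cdot,\cdot}$) and a piece of the form $T(\cdots)$. The $\g$-part should vanish by a Jacobi/Yamaguti identity on $\g$, and the $T(\cdots)$-part should vanish by a combination of \eqref{Ooperator1}, \eqref{RLYb}, \eqref{RLYd}. I would dispatch \eqref{RLYd} analogously. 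Then I would tackle \eqref{RLYe}, the identity
\[
\varrho(\Courant{u,v,w}_T) = [D_{\varrho,\varpi}(u,v),\varrho(w)],
\]
again separating the equation into its non-$T$ part (handled by \eqref{LY3} on $\g$) and its $T(\cdots)$ part (handled by \eqref{RLYe} on $V$ together with the Rota-Baxter identities).

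The hardest two are \eqref{RYT4} and \eqref{RLY5}, since they are bilinear in $\varpi$ or pair $\varpi$ with $D_{\varrho,\varpi}$ and therefore produce quartic expressions in $T$. The main obstacle is the bookkeeping: after full expansion, each side becomes a long sum of terms of the forms $\Courant{\Courant{\cdot,\cdot,\cdot},\cdot,\cdot}$, $[\Courant{\cdot,\cdot,\cdot},\cdot]$, $T(\mu(\cdot,\cdot)\mu(\cdot,\cdot)\cdot)$, $T(D_{\rho,\mu}(\cdot,\cdot)\mu(\cdot,\cdot)\cdot)$, etc. The strategy to manage this is to group the terms by ``type'' (pure-$\g$, single-$T$, double-$T$) and cancel each group by the corresponding identity: the pure-$\g$ group uses the fundamental identity \eqref{fundamental} or its consequences \eqref{RLY5}--\eqref{RLY6}; the single-$T$ group uses \eqref{RYT4} and \eqref{RLY5} together with \eqref{RLY5a}; and the double-$T$ group is collapsed via \eqref{Ooperator2} (and, where needed, \eqref{Ooperator1}).

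Finally, I would note the following shortcut that substantially cuts the work. Since $(V;\rho,\mu)$ is already a representation of $(\g,[\cdot,\cdot],\Courant{\cdot,\cdot,\cdot})$, the semidirect product $\g\ltimes_{\rho,\mu}V$ is a Lie-Yamaguti algebra. A direct check shows that the graph $\mathrm{Gr}(T) = \{Tu + u : u\in V\} \subset \g\oplus V$ is a Lie-Yamaguti subalgebra precisely because $T$ satisfies \eqref{Ooperator1}--\eqref{Ooperator2}, and is isomorphic to $(V,[\cdot,\cdot]_T,\Courant{\cdot,\cdot,\cdot}_T)$ via $u\mapsto Tu+u$. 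The complement $\g\oplus 0$ is stable under the $\mathrm{Gr}(T)$-action inside $\g\ltimes_{\rho,\mu}V$ modulo $\mathrm{Gr}(T)$, and unwinding this action via the isomorphism yields exactly the maps $\varrho$, $\varpi$, $D_{\varrho,\varpi}$ defined in \eqref{repre1}--\eqref{repre3}. Thus $(\g;\varrho,\varpi)$ inherits the representation axioms automatically from the semidirect-product Lie-Yamaguti structure. Should this conceptual route prove cleaner, I would present it as the proof and relegate the direct five-axiom verification to a remark; otherwise, the brute-force computation outlined above goes through, with the double-$T$ cancellations via \eqref{Ooperator1}--\eqref{Ooperator2} being the step requiring the greatest care.
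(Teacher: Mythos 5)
Your proposal is sound, but neither of your two routes is the one the paper actually takes. The paper explicitly declines the brute-force five-axiom verification (your primary plan) and instead observes that $N_T=\left(\begin{smallmatrix}0&T\\0&0\end{smallmatrix}\right)$ is a Nijenhuis operator on the semidirect product $\g\ltimes_{\rho,\mu}V$; it then computes the deformed brackets $[\cdot,\cdot]_{N_T}$ and $\Courant{\cdot,\cdot,\cdot}_{N_T}$ from \eqref{deform1}--\eqref{deform2} and recognizes the result as exactly the semidirect product of $(V,[\cdot,\cdot]_T,\Courant{\cdot,\cdot,\cdot}_T)$ acting on $\g$ through $\varrho$, $\varpi$, $D_{\varrho,\varpi}$, so that the proposition characterizing representations by semidirect products finishes the proof with no axiom-checking at all. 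Your ``shortcut'' via the graph $\mathrm{Gr}(T)\subset\g\ltimes_{\rho,\mu}V$ is a close cousin of this (for a square-zero Nijenhuis operator the deformed bracket is essentially the graph construction in disguise), and your identification of the induced maps on $\g\cong(\g\oplus V)/\mathrm{Gr}(T)$ with \eqref{repre1}--\eqref{repre3} is correct. The trade-off is this: the Nijenhuis route needs no new lemma, because the deformed structure is already known to be a Lie-Yamaguti algebra, whereas your quotient route silently relies on the statement --- asserted by you as ``automatic'' and not present in the paper --- that for a Lie-Yamaguti subalgebra $\frka\subset\frkh$ the quotient $\frkh/\frka$ carries a representation of $\frka$ via $\rho(a)\bar x=\overline{[a,x]}$ and $\mu(a,b)\bar x=\overline{\Courant{x,a,b}}$. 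That lemma is true (the maps descend because $\frka$ is a subalgebra, and the five axioms follow from the same identities that make the adjoint action of Example \ref{ad} a representation), but it must be stated and at least sketched; once it is in place your argument is complete, and arguably cleaner than either the brute-force computation or the Nijenhuis detour. If instead you fall back on the direct verification, be aware that it is exactly the long calculation the authors chose to avoid, and the burden is on you to actually carry out the quartic-in-$T$ cancellations for \eqref{RYT4} and \eqref{RLY5} rather than only to describe how they would be grouped.
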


We may prove Theorem \ref{represent} by checking that $(\g;\varrho,\varpi)$ satisfies the conditions in Definition \ref{defi:representation}, but here we will prove it by another way. In order to do this, we should go back some notions in \cite{Sheng Zhao}. Recall that a Nijenhuis operator on a Lie-Yamaguti algebra $(\g,[\cdot,\cdot],\Courant{\cdot,\cdot,\cdot})$ is a linear map $N:\g\to \g$ satisfying
\begin{eqnarray*}
\label{NIJ1}[Nx,Ny]&=&N\big([Nx,y]+[x,Ny]-N[x,y]\big),\\
~\label{NIJ2}\Courant{Nx,Ny,Nz}&=&N\Big(\Courant{Nx,Ny,z}+\Courant{Nx,y,Nz}+\Courant{x,Ny,Nz}\\
~\nonumber &&-N\Courant{Nx,y,z}-N\Courant{x,Ny,z}-N\Courant{x,y,Nz}+N^2\Courant{x,y,z}\Big), \quad \forall x,y,z \in \g.
\end{eqnarray*}
We denote a pair of deformed brackets $([\cdot,\cdot]_N,\Courant{\cdot,\cdot,\cdot}_N)$ by
\begin{eqnarray}
\label{deform1}[x,y]_N&=&[Nx,y]+[x,Ny]-N[x,y],\\
\label{deform2}~\Courant{x,y,z}_N&=&\Courant{Nx,Ny,z}+\Courant{Nx,y,Nz}+\Courant{x,Ny,Nz}\\
\nonumber~ &&-N\Courant{Nx,y,z}-N\Courant{x,Ny,z}-N\Courant{x,y,Nz}+N^2\Courant{x,y,z}, \quad \forall x,y,z \in \g.
\end{eqnarray}
Then $(\g,[\cdot,\cdot]_N,\Courant{\cdot,\cdot,\cdot}_N)$ is a Lie-Yamaguti algebra and $N$ is a Lie-Yamaguti homomorphism from $(\g,[\cdot,\cdot]_N,\Courant{\cdot,\cdot,\cdot}_N)$ to $(\g,[\cdot,\cdot],\Courant{\cdot,\cdot,\cdot})$.

\vspace{2mm}

\noindent
\emph{Proof of Theorem {\rm \ref{represent}:}}
It is direct to see that if $T:V \to \g$ is a relative Rota-Baxter operator on a Lie-Yamaguti algebra $(\g,[\cdot,\cdot],\Courant{\cdot,\cdot,\cdot})$ with respect to a representation $(V;\rho,\mu)$, then
$N_T=\begin{pmatrix}
0 & T\\
0 & 0
\end{pmatrix}$
is a Nijenhuis operator on the semidirect product Lie-Yamaguti algebra $\g\ltimes_{\rho,\mu} V$. Then by \eqref{deform1} and \eqref{deform2}, we deduce that there is a Lie-Yamaguti algebra structure on $V\oplus\g\cong\g \oplus V$ given by for all $x,y,z \in \g,~u,v,w\in V,$
\begin{eqnarray*}
~ &&[x+u,y+v]_{N_T}\\
~ &=&[N_T(x+u),y+v]_{\rho,\mu}+[x+u,N_T(y+v)]_{\rho,\mu}-N_T[x+u,y+v]_{\rho,\mu}\\
~ &=&[Tu,y+v]_{\rho,\mu}+[x+u,Tv]_{\rho,\mu}-N_T([x,y]+\rho(x)v-\rho(y)u)\\
~ &=&[Tu,y]+\rho(Tu)v+[x,Tv]-\rho(Tv)u-T(\rho(x)v-\rho(y)u)\\
~ &=&[u,v]_T+\varrho(u)y-\varrho(v)x,\\
~ &&\\
~&&\Courant{x+u,y+v,z+w}_{N_T}\\
~ &=&\Courant{N_T(x+u),N_T(y+v),z+w}_{\rho,\mu}+\Courant{N_{T}(x+u),y+v,N_{T}(z+w)}_{\rho,\mu}+\Courant{x+u,N_{T}(y+v),N_{T}(z+w)}_{\rho,\mu}\\
~ &&-N_T(\Courant{N_{T}(x+u),y+v,z+w}_{\rho,\mu}+\Courant{x+u,N_{T}(y+v),z+w}_{\rho,\mu})+\Courant{x+u,y+v,N_{T}(z+w)}_{\rho,\mu})\\
~ &=&\Courant{Tu,Tv,z+w}_{\rho,\mu}+\Courant{Tu,y+v,Tw}_{\rho,\mu}+\Courant{x+u,Tv,Tw}_{\rho,\mu}\\
~ &&-N_T(\Courant{Tu,y+v,z+w}_{\rho,\mu}+\Courant{x+u,Tv,z+w}_{\rho,\mu}+\Courant{x+u,y+v,Tw}_{\rho,\mu})\\
~ &=&\Courant{Tu,Tv,z}+D_{\rho,\mu}(Tu,Tv)w+\Courant{Tu,y,Tw}-\mu(Tu,Tw)v+\Courant{x,Tv,Tw}+\mu(Tv,Tw)u\\
~ &&-T\Big(D_{\rho,\mu}(Tu,y)w-\mu(Tu,z)v+D_{\rho,\mu}(x,Tv)w+\mu(Tv,z)u+\mu(y,Tw)u-\mu(x,Tw)v\Big)\\
~ &=&\Courant{u,v,w}_T+D_{\varrho,\varpi}(u,v)z+\varpi(v,w)x-\varpi(u,w)y,
\end{eqnarray*}
which implies that $(\g;\varrho,\varpi)$ is a representation of Lie-Yamaguti algebra $(V,[\cdot,\cdot]_T,\Courant{\cdot,\cdot,\cdot}_T)$. This finishes the proof.\qed

Having endowed the vector space $V$ with a Lie-Yamaguti algebra structure $([\cdot,\cdot]_T,\Courant{\cdot,\cdot,\cdot}_T)$ and established a representation $(\g;\varrho,\varpi)$ of $(V,[\cdot,\cdot]_T,\Courant{\cdot,\cdot,\cdot}_T)$ gives rise to the corresponding Yamaguti cohomology of $(V,[\cdot,\cdot]_T,\Courant{\cdot,\cdot,\cdot}_T)$ with coefficients in the representation $(\g;\varrho,\varpi)$:
$$\Big(\oplus_{p=1}^\infty C_{\rm LieY}^p(V,\g),(\delta_{\rm I}^T,\delta_{\rm II}^T)\Big).$$
 More precisely, if $n\geqslant 1$, $\delta^T:C_{\rm LieY}^{n+1}(V,\g)\to C_{\rm LieY}^{n+2}(V,\g)$ is given by for any $(f,g)\in C_{\rm LieY}^{n+1}(V,\g)$,
\begin{eqnarray*}
~ &&\Big(\delta^T_{\rm I}(f,g)\Big)(\huaV_1,\cdots,\huaV_{n+1})\\
~ &=&(-1)^{n}\Big([Tu_{n+1},g(\huaV_1,\cdots,\huaV_n,v_{n+1})]-[Tv_{n+1},g(\huaV_1,\cdots,\huaV_n,u_{n+1})]\\
~ &&\quad\quad-g(\huaV_1,\cdots,\huaV_n,\rho(Tu_{n+1})v_{n+1}-\rho(Tv_{n+1})u_{n+1})+T\big(\rho(g(\huaV_1,\cdots,\huaV_n,v_{n+1}))u_{n
+1}\big)\\
~ &&\quad\quad-T\big(\rho(g(\huaV_1,\cdots,\huaV_n,u_{n+1}))v_{n
+1}\big)\Big)\\
~ &&+\sum_{k=1}^{n+1}(-1)^{k+1}\Big(\Courant{Tu_{k},Tv_k,f(\huaV_1,\cdots,\hat{\huaV_k},\cdots,\huaV_{n+1})}+T\big(\mu(Tv_k,f(\huaV_1,\cdots,\hat{\huaV_k},\cdots,\huaV_{n+1}))u_k\big)\\
~ &&\quad\quad -T\big(\mu(Tu_k,f(\huaV_1,\cdots,\hat{\huaV_k},\cdots,\huaV_{n+1}))v_k\big)\Big)\\
~ &&+\sum_{1\leqslant k<l\leqslant n+1}(-1)^kf(\huaV_1,\cdots,\hat{\huaV_k},\cdots,\huaV_k\circ\huaV_l,\cdots,\huaV_{n+1}),\\
~ &&\\
~&&\Big(\delta^T_{\rm II}(f,g)\Big)(\huaV_1,\cdots,\huaV_{n+1},w)\\
~ &=&(-1)^n\Big(\Courant{g(\huaV_1,\cdots,\huaV_n,u_{n+1}),Tv_{n+1},Tw}-\Courant{g(\huaV_1,\cdots,\huaV_n,v_{n+1}),Tu_{n+1},Tw}\\
~ &&\quad\quad+T\big(D_{\rho,\mu}(g(\huaV_1,\cdots,\huaV_n,u_{n+1}),Tv_{n+1})w-\mu(g(\huaV_1,\cdots,\huaV_n,u_{n+1}),Tw)v_{n+1}\\
~ &&\quad\quad+D_{\rho,\mu}(g(\huaV_1,\cdots,\huaV_n,v_{n+1}),Tu_{n+1})w-\mu(g(\huaV_1,\cdots,\huaV_n,v_{n+1}),Tw)u_{n+1}\big)\Big)\\
~ &&+\sum_{k=1}^{n+1}(-1)^{k}\Big(\Courant{Tu_k,Tv_k,g(\huaV_1,\cdots,\hat{\huaV_k},\cdots,\huaV_{n+1},w)}+T\big(\mu(Tv_k,g(\huaV_1,\cdots,\hat{\huaV_k},\cdots,\huaV_{n+1},w))u_k\\
~ &&\quad\quad-\mu(Tu_k,g(\huaV_1,\cdots,\hat{\huaV_k},\cdots,\huaV_{n+1},w))v_k\big)\Big)\\
~ &&+\sum_{1\leqslant k<l\leqslant n+1}(-1)^kg(\huaV_1,\cdots,\hat{\huaV_k},\cdots,\huaV_k\circ\huaV_l,\cdots,\huaV_{n+1},w)\\
~ &&+\sum_{k=1}^{n+1}(-1)^kg(\huaV_1,\cdots,\hat{\huaV_k},\cdots,\huaV_{n+1},\Courant{u_k,v_k,w}_T),
\end{eqnarray*}
where $\huaV_i=u_i\wedge v_i\in \wedge^2V~ (1\leqslant i\leqslant n+1),~ w\in V$ and $\huaV_k\circ\huaV_l=\Courant{u_k,v_k,u_l}_T\wedge v_l+u_l\wedge\Courant{u_k,v_k,v_l}_T$.

In particular, for any $f\in C_{\rm LieY}^1(V,\g)=\Hom(V,\g)$,
$$\delta^T:C_{\rm LieY}^1(V,\g)\to C_{\rm LieY}^2(V,\g),\quad f \mapsto (\delta^T_{\rm I}(f),\delta^T_{\rm II}(f))$$
is given by
\begin{eqnarray*}
\Big(\delta^T_{\rm I}(f)\Big)(u,v)&=&[Tu,f(v)]-[Tv,f(u)]+T\Big(\rho(f(v)u)-\rho(f(u)v)\Big)-f([u,v]_T),\\
\Big(\delta^T_{\rm II}(f)\Big)(u,v,w)&=&\Courant{Tu,Tv,f(w)}+\Courant{f(u),Tv,Tw}-\Courant{f(v),Tu,Tw}-f(\Courant{u,v,w}_T)\\
~ &&-T\Big(D_{\rho,\mu}(f(u),Tv)w-D_{\rho,\mu}(f(v),Tu)w+\mu(Tv,f(w))u-\mu(Tu,f(w))v\\
~ &&\quad-\mu(f(u),Tw)v+\mu(f(v),Tw)u\Big), \qquad \forall u,v,w \in V.
\end{eqnarray*}

In the following, we will give the set of $0$-cochains and the explicit coboundary map. For all $\frkX\in \wedge^2\g$, define $\delta(\frkX):V\to \g$ by
\begin{eqnarray}
\label{delta}\delta(\frkX)v:=TD_{\rho,\mu}(\frkX)v-\Courant{\frkX,Tv}, \quad \forall v\in V.
\end{eqnarray}

\begin{pro}\label{0cocy}
Let $T$ be a relative Rota-Baxter operator on a Lie-Yamaguti algebra $(\g,\br,\Courant{\cdot,\cdot,\cdot})$ with respect to a representation $(V;\rho,\mu)$. Then $\delta(\frkX)$ is a $1$-cocycle on the Lie-Yamaguti algebra $(V,[\cdot,\cdot]_T,\Courant{\cdot,\cdot,\cdot}_T)$ with the coefficients in the representation $(\g;\varrho,\varpi)$.
\end{pro}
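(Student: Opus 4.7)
The plan is to verify directly that $\delta^T(\delta(\frkX)) = 0$, i.e.\ both components $\delta^T_{\rm I}(\delta(\frkX))$ and $\delta^T_{\rm II}(\delta(\frkX))$ vanish, by substituting $f(v) = TD_{\rho,\mu}(\frkX)v - \Courant{\frkX,Tv}$ into the explicit $1$-coboundary formulas displayed just above the proposition. By bilinearity it suffices to take $\frkX = x\wedge y$, and for readability I will write $D := D_{\rho,\mu}$ throughout.

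For the first component, I would expand
\[
\bigl(\delta^T_{\rm I}(\delta(\frkX))\bigr)(u,v)
= [Tu, f(v)] - [Tv, f(u)] + T\bigl(\rho(f(v))u - \rho(f(u))v\bigr) - f([u,v]_T)
\]
and split each term into a $TD(x,y)(\cdot)$-piece and a $\Courant{x,y,T(\cdot)}$-piece. The four triple-bracket pieces $-[Tu,\Courant{x,y,Tv}] + [Tv,\Courant{x,y,Tu}] + \Courant{x,y,[Tu,Tv]}$ collapse to zero using the mixed Leibniz axiom \eqref{LY3} together with the relative Rota-Baxter identity \eqref{Ooperator1} for $f([u,v]_T)$. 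For the remaining $TD(x,y)$-pieces, apply \eqref{Ooperator1} to rewrite $[Tu, TD(x,y)v]$ and $[Tv, TD(x,y)u]$ as elements in the image of $T$; after cancellation of the $\rho(TD(x,y)u)$ and $\rho(TD(x,y)v)$ terms, what remains is $T$ applied to
\[
[\rho(Tu),D(x,y)]v - [\rho(Tv),D(x,y)]u + \rho(\Courant{x,y,Tu})v - \rho(\Courant{x,y,Tv})u,
\]
which vanishes thanks to the representation axiom \eqref{RLYe}.

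The second component is computationally heavier but the strategy is identical. After substituting $f = \delta(\frkX)$ into the formula for $\delta^T_{\rm II}$, I would group terms into three families: (i) triple brackets of the form $\Courant{\Courant{x,y,-},-,-}$ arising from the $\Courant{x,y,T(\cdot)}$-piece of $f$, (ii) terms of the form $\Courant{TD(x,y)(\cdot), -, -}$ and (iii) terms $T\bigl(D(\cdot,\cdot) \cdot\bigr)$ and $T\bigl(\mu(\cdot,\cdot)\cdot\bigr)$. Family (i) will cancel after invoking the fundamental identity \eqref{fundamental} on $\Courant{x,y,\Courant{Tu,Tv,Tw}}$, using \eqref{Ooperator2} to unfold $f(\Courant{u,v,w}_T)$. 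Family (ii), after applying \eqref{Ooperator2} to write $\Courant{TD(x,y)u, Tv, Tw}$ etc.\ as $T$ of something, combines with family (iii); the remaining $T$-image collects to
\[
T\Bigl( [\mu(\cdot,\cdot), D(x,y)] \;\text{pieces} \;+\; \mu(\Courant{x,y,\cdot},\cdot)\;\text{and}\; \mu(\cdot,\Courant{x,y,\cdot})\;\text{pieces}\Bigr),
\]
which vanishes by the representation axioms \eqref{RYT4}, \eqref{RLY5}, together with the identity \eqref{RLY5a} for $D_{\rho,\mu}$.

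The conceptual underpinning of both calculations is that, by the proof of Theorem \ref{represent}, $N_T$ is a Nijenhuis operator on the semidirect product $\g\ltimes_{\rho,\mu} V$; one checks directly that $\delta(\frkX)v$ is precisely the commutator $[N_T, \widehat{D}(\frkX)]v$, where $\widehat{D}(\frkX)$ is the inner derivation $\Courant{\frkX,\cdot} + D(\frkX)\cdot$ of $\g\ltimes_{\rho,\mu}V$ acting on $V$. This is what makes all cancellations happen, and it also explains why the $0$-cochains were chosen to be $\wedge^2\g$. The main obstacle is purely bookkeeping: the $\delta^T_{\rm II}$ expansion produces a large number of terms, and keeping the sign conventions and the skew-symmetry of $D_{\rho,\mu}$ straight requires patience rather than any new idea beyond the five representation axioms \eqref{RLYb}--\eqref{RLY5}, the Lie-Yamaguti axioms \eqref{LY1}--\eqref{fundamental}, and the two relative Rota-Baxter identities \eqref{Ooperator1}--\eqref{Ooperator2}.
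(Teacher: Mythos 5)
Your proposal is correct and follows essentially the same route as the paper's proof: a direct expansion of $\delta^T_{\rm I}(\delta(\frkX))$ and $\delta^T_{\rm II}(\delta(\frkX))$, cancelling the bracket terms via \eqref{LY3} and \eqref{fundamental}, rewriting via the relative Rota-Baxter identities \eqref{Ooperator1}--\eqref{Ooperator2}, and killing the residual $T$-image via \eqref{RLYe}, \eqref{RLY5} and \eqref{RLY5a} (the axiom \eqref{RYT4} you cite is not actually needed in the final cancellation). Your observation that $\delta(\frkX)v=[N_T,\widehat{D}(\frkX)]v$ for the inner derivation $\widehat{D}(\frkX)=\Courant{\frkX,\cdot}+D_{\rho,\mu}(\frkX)$ of $\g\ltimes_{\rho,\mu}V$ is a pleasant conceptual addition the paper does not make explicit, but the verification itself is the same computation.
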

\begin{proof}
It is sufficient to show that both $\delta_{\rm I}^T(\delta(\frkX))$ and $\delta_{\rm II}^T(\delta(\frkX))$ all vanish. Indeed,  for any $u,v,w\in V$, we have
{\footnotesize
\begin{eqnarray*}
~ &&\delta^T_{\rm I}\Big(\delta(\frkX)\Big)(u,v)\\
~ &\stackrel{\eqref{1cochain}}{=}&\varrho(u)\delta(\frkX)(v)-\varrho(v)\delta(\frkX)(u)-\delta(\frkX)([u,v]_T)\\
~ &\stackrel{\eqref{repre1}}{=}&[Tu,\delta(\frkX)(v)]+T(\rho(\delta(\frkX)(v))u)-[Tv,\delta(\frkX)(u)]-T(\rho(\delta(\frkX)(u))v)\\
~ &&-T(D_{\rho,\mu}(\frkX)[u,v]_T)+\Courant{\frkX,T[u,v]_T}\\
~ &\stackrel{\eqref{delta}}{=}&[Tu,TD_{\rho,\mu}(\frkX)v]-[Tu,\Courant{\frkX,Tv}]+T(\rho(T(D_{\rho,\mu}(\frkX)v))u)-T(\rho(\Courant{\frkX,Tv})u)\\
~ &&-[Tv,TD_{\rho,\mu}(\frkX)u]+[Tv,\Courant{\frkX,Tu}]-T(\rho(T(D_{\rho,\mu}(\frkX)u))v)+T(\rho(\Courant{\frkX,Tu})v)\\
~ &&-T(D_{\rho,\mu}(\frkX)[u,v]_T)+\Courant{\frkX,T[u,v]_T}\\
~ &\stackrel{\eqref{Ooperator1}}{=}&T(\rho(Tu)D_{\rho,\mu}(\frkX)v)-T(\rho(T(D_{\rho,\mu}(\frkX)v))u)-[Tu,\Courant{\frkX,Tv}]+T(\rho(T(D_{\rho,\mu}(\frkX)v))u)-T(\rho(\Courant{\frkX,Tv})u)\\
~ &&-T(\rho(Tv)D_{\rho,\mu}(\frkX)u)+T(\rho(T(D_{\rho,\mu}(\frkX)u))v)+[Tv,\Courant{\frkX,Tu}]-T(\rho(T(D_{\rho,\mu}(\frkX)u))v)+T(\rho(\Courant{\frkX,Tu})v)\\
~ &&-T(D_{\rho,\mu}(\frkX)(\rho(Tu)v-\rho(Tv)u))+\Courant{\frkX,[Tu,Tv]}\\
~ &\stackrel{\eqref{LY3},\eqref{RLYe}}{=}&0,\\
~ &&\\
~ &&\delta_{\rm II}^T\Big(\delta(\frkX)\Big)(u,v,w)\\
 ~ &\stackrel{\eqref{2cochain}}{=}&-\delta(\frkX)(\Courant{u,v,w}_T)+D_{\varrho,\varpi}(u,v)\Big(\delta (\frkX)w\Big)+\varpi(v,w)\Big(\delta (\frkX)u\Big)-\varpi(u,w)\Big(\delta (\frkX)v\Big)\\
 ~ &\stackrel{\eqref{repre1},\eqref{repre2}}{=}&\Courant{Tu,Tv,TD_{\rho,\mu}(\frkX)w-\Courant{\frkX,Tw}}+\Courant{TD_{\rho,\mu}(\frkX)u-\Courant{\frkX,Tu},Tv,Tw}\\
 ~ &&+\Courant{Tu,TD_{\rho,\mu}(\frkX)v-\Courant{\frkX,Tv},Tw}\\
 ~ &&-TD(\frkX)\Big(D_{\rho,\mu}(Tu,Tv)w+\mu(Tv,Tw)u-\mu(Tu,Tw)v\Big)\\
 ~ &&+\Courant{\frkX,T\big(D_{\rho,\mu}(Tu,Tv)w+\mu(Tv,Tw)u-\mu(Tu,Tw)v\big)}\\
 ~ &&-T\Big(D_{\rho,\mu}\big(TD_{\rho,\mu}(\frkX)u-\Courant{\frkX,Tu},Tv\big)w-D_{\rho,\mu}\big(TD_{\rho,\mu}(\frkX)v-\Courant{\frkX,Tv},Tu\big)w\Big)\\
 ~ &&-T\Big(\mu\big(TD_{\rho,\mu}(\frkX)v-\Courant{\frkX,Tv},Tw\big)u-\mu\big(TD_{\rho,\mu}(\frkX)u-\Courant{\frkX,Tu},Tw\big)v\Big)\\
 ~ &&-T\Big(\mu\big(Tv,TD_{\rho,\mu}(\frkX)w-\Courant{\frkX,Tw}\big)u-\mu\big(Tu,TD_{\rho,\mu}(\frkX)w-\Courant{\frkX,Tw}\big)v\Big)\\
 ~ &\stackrel{\eqref{Ooperator2}}{=}&\Courant{Tu,Tv,TD_{\rho,\mu}(\frkX)w}-\Courant{Tu,Tv,\Courant{\frkX,Tw}}+\Courant{TD_{\rho,\mu}(\frkX)u,Tv,Tw}\\
 ~ &&-\Courant{\Courant{\frkX,Tu},Tv,Tw}+\Courant{Tu,TD_{\rho,\mu}(\frkX)v,Tw}-\Courant{Tu,\Courant{\frkX,Tv},Tw}\\
 ~ &&-TD_{\rho,\mu}(\frkX)\Big(D_{\rho,\mu}(Tu,Tv)w+\mu(Tv,Tw)u-\mu(Tu,Tw)v\Big)+\Courant{\frkX,\Courant{Tu,Tv,Tw}}\\
 ~ &&-T\Big(D_{\rho,\mu}\big(TD_{\rho,\mu}(\frkX)u-\Courant{\frkX,Tu},Tv\big)w-D_{\rho,\mu}\big(TD_{\rho,\mu}(\frkX)v-\Courant{\frkX,Tv},Tu\big)w\Big)\\
 ~ &&-T\Big(\mu\big(TD_{\rho,\mu}(\frkX)v-\Courant{\frkX,Tv},Tw\big)u-\mu\big(TD_{\rho,\mu}(\frkX)u-\Courant{\frkX,Tu},Tw\big)v\Big)\\
 ~ &&-T\Big(\mu\big(Tv,\big(TD_{\rho,\mu}(\frkX)w-\Courant{\frkX,Tw}\big)u-\mu\big(Tu,TD_{\rho,\mu}(\frkX)w-\Courant{\frkX,Tw}\big)v\Big)\\
 ~ &\stackrel{\eqref{fundamental}}{=}&\Courant{Tu,Tv,TD_{\rho,\mu}(\frkX)w}+\Courant{TD_{\rho,\mu}(\frkX)u,Tv,Tw}+\Courant{Tu,TD_{\rho,\mu}(\frkX)v,Tw}\\
 ~ &&-TD_{\rho,\mu}(\frkX)\Big(D_{\rho,\mu}(Tu,Tv)w+\mu(Tv,Tw)u-\mu(Tu,Tw)v\Big)\\
 ~ &&-T\Big(D_{\rho,\mu}\big(TD_{\rho,\mu}(\frkX)u,Tv\big)w-D_{\rho,\mu}\big(\Courant{\frkX,Tu},Tv\big)w-D_{\rho,\mu}\big(TD_{\rho,\mu}(\frkX)v,Tu\big)w\Big)
 +D_{\rho,\mu}\big(\Courant{\frkX,Tv},Tu\big)w\Big)\\
 ~ &&-T\Big(\mu\big(TD_{\rho,\mu}(\frkX)v,Tw\big)u-\mu\big(\Courant{\frkX,Tv},Tw\big)u-\mu\big(TD_{\rho,\mu}(\frkX)u,Tw\big)v\Big)+\mu\big(\Courant{\frkX,Tu},Tw\big)v\Big)\\
 ~ &&-T\Big(\mu\big(Tv,TD_{\rho,\mu}(\frkX)w\big)u-\mu\big(Tv,\Courant{\frkX,Tw}\big)u-\mu\big(Tu,TD_{\rho,\mu}(\frkX)w\big)v+\mu\big(Tu,\Courant{\frkX,Tw}\big)v\Big)\\
 ~ &\stackrel{\eqref{Ooperator2}}{=}&T\Big(D_{\rho,\mu}(Tu,Tv)D_{\rho,\mu}(\frkX)w+\mu(Tv,TD_{\rho,\mu}(\frkX)w)u-\mu(Tu,TD_{\rho,\mu}(\frkX)w)v\Big)\\
 ~ &&+T\Big(D_{\rho,\mu}(TD_{\rho,\mu}(\frkX)u,Tv)w+\mu(Tv,Tw)D_{\rho,\mu}(\frkX)u-\mu(TD_{\rho,\mu}(\frkX)u,Tw)v\Big)\\
 ~ &&+T\Big(D_{\rho,\mu}(Tu,TD_{\rho,\mu}(\frkX)v)w+\mu(TD_{\rho,\mu}(\frkX)v,Tw)u-\mu(Tu,Tw)D_{\rho,\mu}(\frkX)v\Big)\\
 ~ &&-TD_{\rho,\mu}(\frkX)\Big(D_{\rho,\mu}(Tu,Tv)w+\mu(Tv,Tw)u-\mu(Tu,Tw)v\Big)\\
 ~ &&-T\Big(D_{\rho,\mu}\big(TD_{\rho,\mu}(\frkX)u,Tv\big)w-D_{\rho,\mu}\big(\Courant{\frkX,Tu},Tv\big)w-D_{\rho,\mu}\big(TD_{\rho,\mu}(\frkX)v,Tu\big)w
 +D_{\rho,\mu}\big(\Courant{\frkX,Tv},Tu\big)w\Big)\\
 ~ &&-T\Big(\mu\big(TD_{\rho,\mu}(\frkX)v,Tw\big)u-\mu\big(\Courant{\frkX,Tv},Tw\big)u-\mu\big(TD_{\rho,\mu}(\frkX)u,Tw\big)v)+\mu\big(\Courant{\frkX,Tu},Tw\big)v\Big)\\
 ~ &&-T\Big(\mu\big(Tv,TD_{\rho,\mu}(\frkX)w\big)u-\mu\big(Tv,\Courant{\frkX,Tw}\big)u-\mu\big(Tu,TD_{\rho,\mu}(\frkX)w\big)v+\mu\big(Tu,\Courant{\frkX,Tw}\big)v\Big)\\
 ~ &=&T\Big(D_{\rho,\mu}(Tu,Tv)D_{\rho,\mu}(\frkX)w+\mu(Tv,Tw)D_{\rho,\mu}(\frkX)u-\mu(Tu,Tw)D_{\rho,\mu}(\frkX)v\Big)\\
 ~ &&-TD_{\rho,\mu}(\frkX)\Big(D_{\rho,\mu}(Tu,Tv)w+\mu(Tv,Tw)u-\mu(Tu,Tw)v\Big)\\
 ~ &&+T\Big(D_{\rho,\mu}(\Courant{\frkX,Tu},Tv)w-D_{\rho,\mu}(\Courant{\frkX,Tv},Tu)w+\mu(\Courant{\frkX,Tv},Tw)u\\
 ~ &&-\mu(\Courant{\frkX,Tu},Tw)v+\mu(Tv,\Courant{\frkX,Tw})u-\mu(Tu,\Courant{\frkX,Tw})v\Big)\\
 &\stackrel{\eqref{RLY5},\eqref{RLY5a}}{=}&0.
\end{eqnarray*}
}
This finishes the proof.
\end{proof}

So far, we have constructed a new complex starting from $0$-cochains, whose cohomology is defined to be that of relative Rota-Baxter operators.
\begin{defi}\label{cohomology}
Let $T$ be a relative Rota-Baxter operator on a Lie-Yamaguti algebra $(\g,[\cdot,\cdot],\Courant{\cdot,\cdot,\cdot})$ with  respect to a representation $(V;\rho,\mu)$. Define the set of $n$-cochains by
\begin{eqnarray}
\huaC^n_T(V,\g):=
\begin{cases}
C^n_{\rm LieY}(V,\g),&n\geqslant 1,\\
\wedge^2\g,&n=0.
\end{cases}
\end{eqnarray}
Define the coboundary map $\de:\huaC^n_{T}(V,\g)\to \huaC^{n+1}_{T}(V,\g)$ by
\begin{eqnarray}
\de:=
\begin{cases}
\delta^T=(\delta^T_{\rm I},\delta^T_{\rm II}),&n\geqslant 1,\\
\delta,&n=0.
\end{cases}
\end{eqnarray}
Thus we obtain a well-defined cochain complex $(\huaC_T^\bullet(V,\g)=\bigoplus\limits_{n=0}^\infty\huaC_T^n(V,\g),\de)$, and we call it the {\bf cohomology of relative Rota-Baxter operator $T$} on the Lie-Yamaguti algebra $(\g,[\cdot,\cdot],\Courant{\cdot,\cdot,\cdot})$ with respect to the representation $(V;\rho,\mu)$. Denote the set of $n$-cocycles and $n$-coboundaries by $\huaZ^n(V,\g)$ and $\huaB^n(V,\g)$ respectively. The {\bf $n$-th cohomology group of relative Rota-Baxter operator $T$} will be taken to be
\begin{eqnarray}
\huaH^n_T(V,\g):=\huaZ^n_T(V,\g)/\huaB^n_T(V,\g), \quad n\geqslant 1.
\end{eqnarray}
\end{defi}

\emptycomment{
\begin{defi}
An $L_{\infty}$-algebra is a $\mathbb Z$-graded vector space $\g=\oplus_{k\in \mathbb Z}\g^k$ endowed with a series of linear maps $l_k:\otimes^k \g \to \g,~k\geqslant 1$ of degree $1$ such that for all homogeneous elements $x_1,\cdots,x_n \in \g$, the following conditions are satisfied
\begin{itemize}
\item[\rm (i)] $l_n(x_{\sigma(1)},\cdots,x_{\sigma(n)})=\epsilon(\sigma)l_n(x_1,\cdots,x_n), \quad \forall \sigma\in S_n$,
\item[\rm (ii)] $\sum_{i=1}^n\sum_{\sigma\in \mathbb S_{i,n-i}} \epsilon(\sigma)l_{n-i+1}(l_i(x_{\sigma(1)},\cdots,x_{\sigma(i)}),x_{\sigma(i+1)},\cdots,x_{\sigma(n)})=0, \quad n\geqslant 1,$
\end{itemize}
where the summation is taken over all $(i,n-i)$-shuffles.
\end{defi}

The notion of Lie $n$-algebras was introduced in , which is a special case of $L_{\infty}$-algebras that the only nonzero bracket is $l_n$. Now let us give the precise description of Lie $3$-algebras.

\begin{defi}
A {\bf Lie $3$-algebra} is a $\mathbb Z$-graded vector space $\g=\oplus_{k \in \mathbb Z}\g^k$ endowed with a trilinear bracket $l_3:\otimes^3\g \to \g$ of degree $1$, such that
\begin{itemize}
\item[\rm (i)] $l_3(x_1,x_2,x_3)=(-1)^{x_1,x_2}l_3(x_2,x_1,x_3)=(-1)^{x_2x_3}l_3(x_1,x_3,x_2),$
\item[\rm (ii)] $\sum_{\sigma\in\mathbb S_5} \epsilon(\sigma)l_3(l_3(x_{\sigma(1)},x_{\sigma(2)},x_{\sigma(3)}),x_{\sigma(4)},x_{\sigma(5)})=0.$
\end{itemize}
\end{defi}

\begin{defi}
A {Maurer-Cartan element} of an $L_{\infty}$-algebra $(\g,\{l_i\}_{i=1}^{\infty})$ is an element $\alpha \in \g^0$ satisfying
\begin{eqnarray*}
\sum_{n=1}^{\infty}{1\over n!}l_n(\underbrace{\alpha,\cdots,\alpha}_n)=0.
\end{eqnarray*}
\end{defi}

Let $\alpha$ be a Maurer-Cartan element of a Lie $3$-algebra $(\g,l_3)$. For all $x,y,z \in \g$, we define $l_k^\alpha:\otimes^k\g \to \g$ by
\begin{eqnarray}
\de_\alpha(x)=l_1^\alpha(x)&=&\half l_3(\alpha,\alpha,x),\\
l_2^\alpha(x,y)&=&l_3(\alpha,x,y),\\
l_3^\alpha(x,y,z)&=&l_3(x,y,z),\\
l_k^\alpha&=&0, \quad k\geqslant 4.
\end{eqnarray}
Then $(\g,l_1^\alpha,l_2^\alpha,l_3^\alpha)$ becomes an $L_\infty$-algebra. Moreover, $\alpha+\alpha'$  is a Maurer-Cartan element of the Lie $3$-algebra $(\g,l_3)$ if and only if $\alpha'$ is a Maurer-Cartan element of the $L_\infty$-algebra $(\g,l_1^\alpha,l_2^\alpha,l_3^\alpha)$, i.e.
\begin{eqnarray}
l_1^\alpha(\alpha')+\half l_2^\alpha(\alpha',\alpha')+{1\over 3!}l_3^\alpha (\alpha',\alpha',\alpha')=0.
\end{eqnarray}

Let us recall the construction of $L_{\infty}$-algebra using the Voronov's derived bracket: An {\bf $V$-structure} is a quadruple $(L,\h,P,\Delta)$, where
\begin{itemize}
\item $(L,[\cdot,\cdot])$ is a graded Lie algebra;
\item $\h$ is an abelian subalgebra of $L$;
\item $P:L \to L$ is a projection onto $\h$, i.e. $P\circ P=P$ with its values in $\h$ and its kernel being a subalgebra of $L$;
\item $\Delta$ is an element of $(\Ker P)^1$ such that $[\Delta,\Delta]=0.$
\end{itemize}
Then $(\h,\{l_k\}_{k=1}^{\infty})$ is an $L_\infty$-algebra where
\begin{eqnarray}
l_k(x_1,\cdots,x_k)=P\underbrace{[\cdots[[}_{k}\Delta,x_1],x_2],\cdots,x_k], \quad \forall x_1,\cdots,x_k \in \h.\label{infty}
\end{eqnarray}
Note that here all $x_i$ are homogeneous elements.}

\section{Deformatons of relative Rota-Baxter operators on Lie-Yamaguti algebras}

In this section, we will use the cohomology theory constructed in the former section to characterize deformations of relative Rota-Baxter operators on Lie-Yamaguti algebras.

\subsection{Linear deformations of relative Rota-Baxter operators on Lie-Yamaguti algebras}

In this subsection, we aim to linear deformations of relative Rota-Baxter operators on Lie-Yamaguti algebras, and we show that the infinitesimals of two equivalent linear deformations of a relative Rota-Baxter operator on Lie-Yamaguti algebra are in the same cohomology class of the first cohomology group.
\begin{defi}
Let $T$ and $T'$ be two relative Rota-Baxter operators on a Lie-Yamaguti algebra $(\g,[\cdot,\cdot],\Courant{\cdot,\cdot,\cdot})$ with respect to a representation $(V;\rho,\mu)$. A {\bf homomorphism} from $T'$ to $T$ is a pair $(\phi_\g,\phi_V)$, where $\phi_\g: \g \to \g$ is a \LYA~ homomorphism and $\phi_V: V \to V$  is a linear map satisfying
\begin{eqnarray}
\label{homo1}T\circ \phi_V&=&\phi_\g\circ T'\\
~\label{homo2}\phi_V(\rho(x)v)&=&\rho(\phi_\g(x))\phi_V(v),\\
~\label{homo3}\phi_V\mu(x,y)(v)&=&\mu(\phi_\g(x),\phi_\g(y))(\phi_V(v)), \quad \forall x,y \in \g,~v\in V.
\end{eqnarray}
In particular, if $\phi_\g$ and $\phi_V$ are invertible, then $(\phi_\g,\phi_V)$ is called an {\bf isomorphism} from $T'$ to $T$.
\end{defi}

By a direct computation, we have the following lemma.
\begin{lem}
Let $T$ and $T'$ be two relative Rota-Baxter operators on a Lie-Yamaguti algebra $(\g,[\cdot,\cdot],\Courant{\cdot,\cdot,\cdot})$ with respect to a representation $(V;\rho,\mu)$, and $(\phi_\g,\phi_V)$ a homomorphism from $T'$ to $T$, then we have
\begin{eqnarray}
\label{homo4}\phi_VD_{\rho,\mu}(x,y)(v)&=&D_{\rho,\mu}(\phi_\g(x),\phi_\g(y))(\phi_V(v)), \quad \forall x,y \in \g,~v\in V.
\end{eqnarray}
\end{lem}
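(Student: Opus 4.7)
The plan is to unfold the explicit formula
$$D_{\rho,\mu}(x,y) = \mu(y,x) - \mu(x,y) + [\rho(x),\rho(y)] - \rho([x,y])$$
given in \eqref{rep}, apply $\phi_V$ to $D_{\rho,\mu}(x,y)(v)$, and transport each of the four resulting summands through $\phi_V$ using the three homomorphism identities \eqref{homo1}--\eqref{homo3} together with the fact that $\phi_\g$ is a Lie-Yamaguti algebra homomorphism (so $\phi_\g([x,y]) = [\phi_\g(x),\phi_\g(y)]$).

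First I would write
\begin{eqnarray*}
\phi_V D_{\rho,\mu}(x,y)(v) &=& \phi_V\mu(y,x)v - \phi_V\mu(x,y)v \\
&& + \phi_V\rho(x)\rho(y)v - \phi_V\rho(y)\rho(x)v - \phi_V\rho([x,y])v.
\end{eqnarray*}
Then I would convert each term separately. The two $\mu$-terms become $\mu(\phi_\g(y),\phi_\g(x))\phi_V(v)$ and $\mu(\phi_\g(x),\phi_\g(y))\phi_V(v)$ by a single application of \eqref{homo3}. For the commutator of $\rho$'s, I would apply \eqref{homo2} twice in sequence, e.g.\
$$\phi_V(\rho(x)\rho(y)v) = \rho(\phi_\g(x))\phi_V(\rho(y)v) = \rho(\phi_\g(x))\rho(\phi_\g(y))\phi_V(v),$$
and symmetrically for the other commutator term. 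For the last term, I would use the bracket-preserving property of $\phi_\g$ followed by \eqref{homo2}:
$$\phi_V\rho([x,y])v = \rho(\phi_\g([x,y]))\phi_V(v) = \rho([\phi_\g(x),\phi_\g(y)])\phi_V(v).$$

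Reassembling the four pieces and invoking \eqref{rep} once more, this time for the pair $(\phi_\g(x),\phi_\g(y))$, yields exactly $D_{\rho,\mu}(\phi_\g(x),\phi_\g(y))(\phi_V(v))$, which is the desired identity. There is no genuine obstacle here; the argument is a direct verification, and notably it uses neither \eqref{homo1} nor the hypothesis that $T,T'$ are relative Rota-Baxter operators, so the result would actually hold for any pair $(\phi_\g,\phi_V)$ satisfying \eqref{homo2}, \eqref{homo3} together with $\phi_\g$ being a homomorphism with respect to $[\cdot,\cdot]$.
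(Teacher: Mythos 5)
Your proof is correct and is exactly the ``direct computation'' the paper alludes to (the paper gives no further detail): expand $D_{\rho,\mu}$ via \eqref{rep}, push $\phi_V$ through each summand using \eqref{homo2}, \eqref{homo3} and the bracket-preservation of $\phi_\g$, and reassemble. Your closing observation that neither \eqref{homo1} nor the Rota-Baxter hypothesis is needed is accurate.
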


In the sequel, we would write $D$ for $D_{\rho,\mu}$ without ambiguity.

\begin{pro}
Let $T$ and $T'$ be two relative Rota-Baxter operators on a Lie-Yamaguti algebra $(\g,[\cdot,\cdot],\Courant{\cdot,\cdot,\cdot})$ with respect to a representation $(V;\rho,\mu)$ and $(\phi_\g,\phi_V)$ a homomorphism from $T'$ to $T$. Then $\phi_V$ is a homomorphism from pre-Lie-Yamaguti algebra from $(V,*_{T'},\{\cdot,\cdot,\cdot\}_{T'})$ to $(V,*_T,\{\cdot,\cdot,\cdot\}_{T})$.
\end{pro}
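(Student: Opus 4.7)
The plan is to verify directly that $\phi_V$ preserves the two pre-Lie-Yamaguti operations $*_{T'}$ and $\{\cdot,\cdot,\cdot\}_{T'}$ induced by $T'$, pushing them forward to $*_T$ and $\{\cdot,\cdot,\cdot\}_T$. The proof is a short unwinding of definitions; there is no real obstacle, just a careful chain of substitutions using the three conditions \eqref{homo1}, \eqref{homo2}, \eqref{homo3} defining a homomorphism of relative Rota-Baxter operators.

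First, I would recall from \eqref{preO} that the pre-Lie-Yamaguti operations attached to $T$ and $T'$ are given, for all $u,v,w \in V$, by
\[
u *_{T'} v = \rho(T'u)v, \qquad \{u,v,w\}_{T'} = \mu(T'v, T'w)u,
\]
and analogously with $T$ in place of $T'$. So the task reduces to checking the two identities
\[
\phi_V(u *_{T'} v) = \phi_V(u) *_T \phi_V(v), \qquad \phi_V(\{u,v,w\}_{T'}) = \{\phi_V(u), \phi_V(v), \phi_V(w)\}_T.
\]

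For the binary operation, I would compute
\[
\phi_V(u *_{T'} v) = \phi_V(\rho(T'u)v) \stackrel{\eqref{homo2}}{=} \rho(\phi_\g(T'u)) \phi_V(v) \stackrel{\eqref{homo1}}{=} \rho(T\phi_V(u))\phi_V(v) = \phi_V(u) *_T \phi_V(v).
\]
For the ternary operation, the same strategy applies: I would use \eqref{homo3} to pull $\phi_V$ past $\mu$, and then \eqref{homo1} to rewrite $\phi_\g\circ T'$ as $T\circ \phi_V$, obtaining
\[
\phi_V(\{u,v,w\}_{T'}) = \phi_V(\mu(T'v,T'w)u) = \mu(T\phi_V(v), T\phi_V(w))\phi_V(u) = \{\phi_V(u), \phi_V(v), \phi_V(w)\}_T.
\]

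Since both the binary and ternary operations of the pre-Lie-Yamaguti algebra associated to a relative Rota-Baxter operator are built only out of $\rho$, $\mu$, and the operator itself, and the homomorphism data $(\phi_\g, \phi_V)$ is precisely designed to intertwine all three ingredients, no additional identity is needed. The derived bracket formulas $\{\cdot,\cdot,\cdot\}_D$ and the commutator would only intervene if one wished to further check that $\phi_V$ is a homomorphism of the sub-adjacent Lie-Yamaguti algebra $(V,[\cdot,\cdot]_{T'}, \Courant{\cdot,\cdot,\cdot}_{T'})$, which follows formally from the pre-Lie-Yamaguti homomorphism property once the above two identities are established (using \eqref{homo4} for the $D$-term). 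Thus the proof consists of these two short calculations and nothing more.
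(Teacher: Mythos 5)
Your proposal is correct and follows exactly the paper's own argument: apply \eqref{homo2} (resp.\ \eqref{homo3}) to move $\phi_V$ past $\rho$ (resp.\ $\mu$), then use \eqref{homo1} to replace $\phi_\g\circ T'$ by $T\circ\phi_V$. The two displayed computations coincide with those in the paper, so there is nothing to add.
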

\begin{proof}
For all $u,v,w\in V$, we have
\begin{eqnarray*}
\phi_V(u*_{T'}v)&=&\phi_V(\rho(T'u)v)=\rho(\phi_\g(T'u)\phi_V(v))\\
~ &=&\rho(T(\phi_V(u))\phi_V(v))=\phi_V(u)*_T\phi_V(v),\\
\phi_V(\{u,v,w\}_{T'})&=&\phi_V(\mu(T'v,T'w)u)=\mu(\phi_\g(T'v),\phi_\g(T'w))(\phi_V(u))\\
~ &=&\mu(T(\phi_V(v)),T(\phi_V(w)))(\phi_V(u))=\{\phi_V(u),\phi_V(v),\phi_V(w)\}_{T}.
\end{eqnarray*}
This finishes the proof.
\end{proof}

The notion of linear deformations of relative Rota-Baxter operators is given as follows.
\begin{defi}
Let $(\g,[\cdot,\cdot],\Courant{\cdot,\cdot,\cdot})$ be a \LYA, and $(V;\rho,\mu)$ a representation of $\g$. Suppose that $T,~\frkT:V \to \g$ are two linear maps, where $T$ is a relative Rota-Baxter operator on $\g$ with respect to $(V;\rho,\mu)$. If $T_t=T+t\frkT$ are still relative Rota-Baxter operators on $\g$ with respect to $(V;\rho,\mu)$ for all $t$, we say that $\frkT$ generates a {\bf linear deformation} of the relative Rota-Baxter operator $T$.
\end{defi}

It is easy to see that $\frkT$ generates a linear deformation of the relative Rota-Baxter operator $T$ if and only if
\begin{eqnarray}
~\label{cocy1}~ &&[\frkT u,Tv]+[Tu,\frkT v]=T\big(\rho(\frkT u)v-\rho(\frkT v)u\big)+\frkT\big(\rho(Tu)v-\rho(Tv)u\big),\\
~\label{Ooper1}&&[\frkT u,\frkT v]=\frkT \big(\rho(\frkT u)v-\rho(\frkT v)u\big),\\
~ \label{cocy}&&\Courant{Tu,Tv,\frkT w}+\Courant{Tu,\frkT v,Tw}+\Courant{\frkT u,Tv,Tw}\\
~ \nonumber&=&\frkT\Big(D(Tu,Tv)w+\mu(Tv,Tw)v-\mu(Tu,Tw)v\Big)\\
~ &&\nonumber+T\Big(D(Tu,\frkT v)w+D(\frkT u,Tv)w+\mu(Tv,\frkT w)u\\
~ &&\nonumber+\mu(\frkT v,Tw)u-\mu(Tu,\frkT w)v-\mu(\frkT u,Tw)v\Big),\\
~ &&\Courant{\frkT u,\frkT v, Tw}+\Courant{Tu,\frkT v,\frkT w}+\Courant{\frkT u,Tv,\frkT w}\\
~ \nonumber&=&T\Big(D(\frkI u,\frkI v)w+\mu(\frkI v,\frkI w)v-\mu(\frkI u,\frkI w)v\Big)\\
~ &&\nonumber+\frkT\Big(D(Tu,\frkT v)w+D(\frkT u,Tv)w+\mu(Tv,\frkT w)u\\
~ &&\nonumber+\mu(\frkT v,Tw)u-\mu(Tu,\frkT w)v-\mu(\frkT u,Tw)v\Big),\\
~ \label{Oopera}&&\Courant{\frkT u,\frkT v, \frkT w}=\frkT \Big(D(\frkT u,\frkT v)w+\mu(\frkT v,\frkT w)u-\mu(\frkT u,\frkT w)v\Big).
\end{eqnarray}

The following can be deduced easily.
\begin{itemize}
\item [\rm (i)] \eqref{cocy1} and \eqref{cocy} means that $\frkT\in\huaC^1(V,\g)$ is a $1$-cocycle of $\delta^T$.
\item [\rm (ii)] \eqref{Ooper1} and \eqref{Oopera} means that $\frkT$ is a relative Rota-Baxter operator on the Lie-Yamaguti algebra $(\g,[\cdot,\cdot],\Courant{\cdot,\cdot,\cdot})$ with respect to the representation $(V;\rho,\mu)$.
\end{itemize}

Let $(A,*,\{\cdot,\cdot,\cdot\})$ be a pre-Lie-Yamaguti algebra, and let $\phi:\otimes^2A\to A$ and $\omega_1,~\omega_2:\otimes^3A \to A$ be linear maps. If the linear operations $(*_t,\{\cdot,\cdot,\cdot\}_t)$ defined by
\begin{eqnarray}
x*_ty&=&x*y+t\phi(x,y),\\
\{x,y,z\}_t&=&\{x,y,z\}+t\omega_1(x,y,z)+t^2\omega_2(x,y,z),\quad \forall x,y,z \in A,
\end{eqnarray}
are still pre-Lie-Yamaguti algebra structures for all $t$, we say that $(\phi,\omega_1,\omega_2)$ generates a linear deformation of the pre-Lie-Yamaguti algebra $A$.

Thanks to the relationship between relative Rota-Baxter operators on \LYA s and pre-\LYA, we have the following proposition.
\begin{pro}
If $\frkT$ generates a linear deformation of the relative Rota-Baxter operator $T$ on a Lie-Yamaguti algebra $(\g,[\cdot,\cdot],\Courant{\cdot,\cdot,\cdot})$ with respect to a representation $(V;\rho,\mu)$, then the triple $(\phi_\frkT,\omega_\frkT^1,\omega_\frkT^2)$ generates a linear deformation of the underlying pre-Lie-Yamaguti algebra $(V,*,\{\cdot,\cdot,\cdot\}_T)$, where
\begin{eqnarray}
\phi_\frkT(u,v)&=&\rho(\frkT(u))v,\\
\omega_\frkT^1(u,v,w)&=&\mu(Tv,\frkT w)u+\mu(\frkT v,Tw)u,\\
\omega_\frkT^2(u,v,w)&=&\mu(\frkT v,\frkT w )u, \quad \forall u,v, w \in V.
\end{eqnarray}
\end{pro}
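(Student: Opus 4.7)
The plan is to observe that this proposition is essentially a direct consequence of the correspondence between relative Rota-Baxter operators and pre-Lie-Yamaguti algebra structures recorded earlier (via the formula \eqref{preO}). The idea is simply to expand the pre-Lie-Yamaguti structure induced by the deformed operator $T_t = T + t\frkT$ as a polynomial in $t$ and recognize the coefficients as $\phi_\frkT$, $\omega_\frkT^1$, and $\omega_\frkT^2$.

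Concretely, I would proceed as follows. First, since $\frkT$ generates a linear deformation of $T$, for every $t$ the map $T_t = T + t\frkT : V \to \g$ is a relative Rota-Baxter operator on $(\g, [\cdot,\cdot], \Courant{\cdot,\cdot,\cdot})$ with respect to $(V;\rho,\mu)$. Applying the construction of \eqref{preO} to $T_t$, we obtain a pre-Lie-Yamaguti algebra structure $(*_{T_t}, \{\cdot,\cdot,\cdot\}_{T_t})$ on $V$. Next, I would expand these operations explicitly:
\begin{eqnarray*}
u *_{T_t} v &=& \rho(T_t u) v \;=\; \rho(Tu)v + t\,\rho(\frkT u) v \;=\; u * v + t\,\phi_\frkT(u,v),\\
\{u,v,w\}_{T_t} &=& \mu(T_t v, T_t w)u \;=\; \mu(Tv,Tw)u + t\bigl(\mu(Tv,\frkT w)u + \mu(\frkT v,Tw)u\bigr) + t^2\,\mu(\frkT v,\frkT w)u,
\end{eqnarray*}
which matches exactly $u*v + t\phi_\frkT(u,v)$ and $\{u,v,w\}_T + t\,\omega_\frkT^1(u,v,w) + t^2\,\omega_\frkT^2(u,v,w)$ respectively. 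Hence the deformed operations $(*_t, \{\cdot,\cdot,\cdot\}_t)$ in the statement coincide with the pre-Lie-Yamaguti structure $(*_{T_t}, \{\cdot,\cdot,\cdot\}_{T_t})$ induced by $T_t$.

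Since, by the result recalled before \eqref{preO}, the relative Rota-Baxter operator $T_t$ yields an honest pre-Lie-Yamaguti algebra structure on $V$ for every $t$, it follows that $(*_t, \{\cdot,\cdot,\cdot\}_t)$ is a pre-Lie-Yamaguti algebra structure on $V$ for every $t$. By definition, this means precisely that $(\phi_\frkT, \omega_\frkT^1, \omega_\frkT^2)$ generates a linear deformation of the pre-Lie-Yamaguti algebra $(V, *, \{\cdot,\cdot,\cdot\}_T)$.

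There is no real obstacle here: the only subtlety is purely bookkeeping, namely making sure that the degree-two term $t^2$ appearing in $\{\cdot,\cdot,\cdot\}_t$ (which is allowed by the ansatz in the definition of a linear deformation of a pre-Lie-Yamaguti algebra) is correctly identified with $\omega_\frkT^2$, while the bilinear part $*_t$ is only linear in $t$ because $\rho$ is linear in its single $\g$-argument. Once the expansions are written out, the proposition follows immediately from the functoriality of the construction in \eqref{preO}.
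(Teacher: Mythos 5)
Your proposal is correct and follows exactly the paper's own argument: the paper likewise applies the construction of \eqref{preO} to $T_t=T+t\frkT$ and expands $u*_tv$ and $\{u,v,w\}_t$ in powers of $t$ to identify $\phi_\frkT$, $\omega_\frkT^1$, $\omega_\frkT^2$. No differences worth noting.
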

\begin{proof}
Denote by $(*_t,\{\cdot,\cdot,\cdot\}_t)$ the corresponding pre-Lie-Yamaguti algebra structure induced by the relative Rota-Baxter operator $T_t:=T+t\frkT$. Indeed, for all $u,v,w \in V$, we have that
\begin{eqnarray*}
u*_tv&=&\rho((T+t\frkT)u)v=\rho(Tu)v+t\rho(\frkT u)v=u*_Tv+t\phi_\frkT(u,v),\\
\{u,v,w\}_t&=&\mu((T+t\frkT)v,(T+t\frkT)w)u\\
~ &=&\mu(Tv,Tw)u+t\Big(\mu(Tv,\frkT w)u+\mu(\frkT v,Tw)u\Big)+t^2\mu(\frkT v,\frkT w )u\\
~ &=&\{u,v,w\}_T+t\omega_\frkT^1(u,v,w)+t^2\omega_\frkT^2(u,v,w).
\end{eqnarray*}
This finishes the proof.
\end{proof}

\begin{defi}
Let $T:V\to \g$ be a relative Rota-Baxter operator on a Lie-Yamaguti algebra $(\g,[\cdot,\cdot],\Courant{\cdot,\cdot,\cdot})$ with respect to a representation $(V;\rho,\mu)$.
\begin{itemize}
\item [\rm (i)]  Two linear deformations $T_t^1=T+t\frkT_1$ and $T_t^2=T+t\frkT_2$ are said to be {\bf equivalent} if there exists an element $\frkX \in \wedge^2\g$ such that $({\Id}_\g+t\frkL_{\frkX},{\Id}_V+tD(\frkX))$ is a homomorphism from $T_t^2$ to $T_t^1$.
\item [\rm (ii)] A linear deformation $T_t=T+t\frkT$ of a relative Rota-Baxter operator $T$ is said to be {\bf trivial} if there exists an element $\frkX \in \wedge^2\g$ such that $({\Id}_\g+t\frkL_{\frkX},{\Id}_V+tD(\frkX))$ is a homomorphism from $T_t$ to $T$.
    \end{itemize}
\end{defi}

Let $({\Id}_\g+t\frkL_{\frkX},{\Id}_V+tD(\frkX))$ be a homomorphism from $T_t^2$ to $T_t^1$. Then ${\Id}_\g+t\frkL_{\frkX}$ is a Lie-Yamaguti algebra homomorphism of $\g$, i.e., the following equalities hold:
\begin{eqnarray}
~\label{Nije} [\Courant{\frkX,x},\Courant{\frkX,y}]&=&0,\\
~ \label{Nij1}\Courant{\Courant{\frkX,x},\Courant{\frkX,y},z}+\Courant{\Courant{\frkX,x},y,\Courant{\frkX,z}}+\Courant{x,\Courant{\frkX,y},\Courant{\frkX,z}}&=&0,\\
~ \label{Nij2}\Courant{\Courant{\frkX,x},\Courant{\frkX,y},\Courant{\frkX,z}}&=&0.
\end{eqnarray}

By $T_t^1\big(({\Id_V}+tD(\frkX))v\big)=\big({\Id}_\g+t\frkL_{\frkX}\big)T_t^2(v)$, we have
\begin{eqnarray}
\label{cocycle}(\frkT_2-\frkT_1)(v)&=&T\Big(D(\frkX)v\Big)-\Courant{\frkX,Tv},\\
\frkT_1\Big(D(\frkX)v\Big)&=&\Courant{\frkX,\frkT_2(v)}.
\end{eqnarray}

By $\Big({\Id}_V+tD(\frkX)\Big)(\rho(x)v)=\rho\Big(({\Id}_\g+t\frkL_\frkX)(x)\Big)({\Id}_V+tD(\frkX))(v)$, we have
\begin{eqnarray}
\rho(\Courant{\frkX,x})D(\frkX)=0.
\end{eqnarray}

Finally, by $\Big({\Id}_V+tD(\frkX)\Big)\mu(z,w)v=\mu\Big(({\Id}_\g+t\frkL_{\frkX})z,({\Id}_\g+t\frkL_{\frkX})w\Big)({\Id}_V+tD(\frkX))v$, we have
\begin{eqnarray}
\label{Nij3}\mu(z,\Courant{\frkX,w})D(\frkX)+\mu(\Courant{\frkX,z},w)D(\frkX)+\mu(\Courant{\frkX,z},\Courant{\frkX,w})&=&0,\\
~\label{Nij4}\mu(\Courant{\frkX,z},\Courant{\frkX,w})D(\frkX)&=&0.
\end{eqnarray}

Note that \eqref{cocycle} means that there exists $\frkX\in \wedge^2\g$, such that $\frkT_2-\frkT_1=\delta(\frkX)$. Thus we have the following key conclusion in this section.
\begin{thm}\label{thm1}
Let $T:V\to \g$ be a relative Rota-Baxter operator on a Lie-Yamaguti algebra $(\g,[\cdot,\cdot],\Courant{\cdot,\cdot,\cdot})$ with respect to a representation $(V;\rho,\mu)$. If two linear deformations $T_t^1=T+t\frkT_1$ and $T_t^2=T+t\frkT_2$ of $T$ are equivalent, then $\frkT_1$ and $\frkT_2$ are in the same class of the cohomology group $\huaH^1_T(V,\g)$.
\end{thm}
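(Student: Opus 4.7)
The plan is to combine the computations already laid out just before the theorem statement with an elementary verification that the infinitesimals are cocycles. First I would use the equivalence hypothesis, which supplies an element $\frkX \in \wedge^2\g$ together with a homomorphism $(\Id_\g + t\frkL_\frkX, \Id_V + tD(\frkX))$ from $T_t^2$ to $T_t^1$. The intertwining relation $T_t^1 \circ (\Id_V + tD(\frkX)) = (\Id_\g + t\frkL_\frkX) \circ T_t^2$ is a polynomial identity in $t$. The coefficient of $t^0$ gives $T = T$ automatically, and the coefficient of $t$ reproduces exactly \eqref{cocycle}, namely
$$(\frkT_2 - \frkT_1)(v) = T(D(\frkX)v) - \Courant{\frkX, Tv}.$$
Comparing this directly to the definition \eqref{delta} of the $0$-coboundary map, the right-hand side is by construction $\delta(\frkX)(v)$, so $\frkT_2 - \frkT_1 = \delta(\frkX)$, exhibiting the difference as a $1$-coboundary.

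Next I would verify that $\frkT_1$ and $\frkT_2$ are themselves $1$-cocycles with respect to $\delta^T$. Expanding the two defining relative Rota-Baxter identities \eqref{Ooperator1}-\eqref{Ooperator2} for $T_t^i = T + t\frkT_i$ and isolating the coefficient of $t$ produces precisely equations \eqref{cocy1} and \eqref{cocy}; matching these term by term against the explicit formulas for $\delta^T_{\rm I}$ and $\delta^T_{\rm II}$ applied to a $1$-cochain shows that $\delta^T(\frkT_i) = 0$ for $i = 1, 2$, so both live in $\huaZ^1_T(V,\g)$. Invoking Proposition \ref{0cocy}, which guarantees $\delta(\frkX) \in \huaZ^1_T(V,\g)$, the relation $\frkT_2 - \frkT_1 = \delta(\frkX) \in \huaB^1_T(V,\g)$ places $\frkT_1$ and $\frkT_2$ in the same coset and yields $[\frkT_1] = [\frkT_2]$ in $\huaH^1_T(V,\g)$.

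The main obstacle here is conceptual rather than computational: everything hinges on recognizing that the right-hand side $T(D(\frkX)v) - \Courant{\frkX, Tv}$ produced by the first-order piece of the equivalence condition coincides exactly with the formula defining $\delta(\frkX)$ in \eqref{delta}. This is precisely what dictated the nonstandard choice of $0$-cochains as $\wedge^2\g$; with that choice in hand, the remainder of the argument is bookkeeping on the coefficients in $t$ of the homomorphism and relative Rota-Baxter equations. Incidentally, the coefficient-of-$t^2$ equations (which involve the Nijenhuis-type relations \eqref{Nije}-\eqref{Nij4} and the auxiliary identity $\frkT_1(D(\frkX)v) = \Courant{\frkX, \frkT_2(v)}$) are consistency conditions that are automatically implied but not needed for the cohomological conclusion.
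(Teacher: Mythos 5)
Your proposal is correct and follows essentially the same route as the paper: the paper likewise extracts the coefficient of $t$ from $T_t^1\circ(\Id_V+tD(\frkX))=(\Id_\g+t\frkL_\frkX)\circ T_t^2$ to obtain $(\frkT_2-\frkT_1)(v)=T(D(\frkX)v)-\Courant{\frkX,Tv}=\delta(\frkX)(v)$, and it records the cocycle property of the $\frkT_i$ in the discussion of \eqref{cocy1} and \eqref{cocy} immediately preceding the theorem. Your extra remarks about the higher-order consistency conditions being unnecessary for the conclusion are accurate.
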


\begin{defi}
Let $T:V\to \g$ be a relative Rota-Baxter operator on a Lie-Yamaguti algebra $(\g,[\cdot,\cdot],\Courant{\cdot,\cdot,\cdot})$ with respect to a representation $(V;\rho,\mu)$. An element $\frkX\in \wedge^2\g$ is called a {\bf Nijenhuis element} with respect to $T$ if $\frkX$ satisfies \eqref{Nije}-\eqref{Nij2}, \eqref{Nij3}, \eqref{Nij4} and the following equation
\begin{eqnarray}
\Courant{\frkX,T(D(\frkX)v)-\Courant{\frkX,Tv}}=0, \quad \forall v \in V. \label{Nij5}
\end{eqnarray}
We denote by $\Nij(T)$ the set of Nijenhuis elements  with respect to $T$.
\end{defi}

It is obvious that a trivial deformation of a relative Rota-Baxter operator on a Lie-Yamaguti algebra gives rise to a Nijenhuis element. Indeed, the converse is also true.
\begin{thm}\label{Nijenhuis}
Let $T:V\to \g$ be a relative Rota-Baxter operator on a Lie-Yamaguti algebra $(\g,[\cdot,\cdot],\Courant{\cdot,\cdot,\cdot})$ with respect to a representation $(V;\rho,\mu)$. Then for any Nijenhuis element $\frkX\in \wedge^2 \g$, $T_t:=T+t\frkT$ with $\frkT:=\delta(\frkX)$ is a trivial linear deformation of the relative Rota-Baxter operator $T$.
\end{thm}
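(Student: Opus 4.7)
My plan is to prove Theorem~\ref{Nijenhuis} by establishing two things in tandem: first, that the pair $\phi := (\Id_\g + t\frkL_\frkX,\, \Id_V + tD(\frkX))$ satisfies the homomorphism conditions \eqref{homo1}--\eqref{homo3} from $T_t$ to $T$; and second, that $T_t = T + t\delta(\frkX)$ is itself a relative Rota-Baxter operator for every scalar $t$. In both parts the strategy is to expand each identity as a polynomial in $t$ and match coefficients, letting the Nijenhuis conditions do precisely the work of killing the higher-order terms.

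For the first part, I would treat the three homomorphism axioms in turn. The requirement that $\Id_\g + t\frkL_\frkX$ be a Lie-Yamaguti endomorphism of $\g$ expands into identities at orders $t^1$, $t^2$, $t^3$: the $t^1$-level is automatic from \eqref{lef} together with \eqref{LY1} and \eqref{fundamental} (i.e., the fact that $\frkL_\frkX$ acts as an inner derivation), while the $t^2$- and $t^3$-levels are exactly the Nijenhuis conditions \eqref{Nije}--\eqref{Nij2}. The $\rho$-intertwining \eqref{homo2} at $t^1$ reduces to \eqref{RLYe}, and the $\mu$-intertwining \eqref{homo3} at $t^1$ to \eqref{RLY5}; their $t^2$- and $t^3$-levels are supplied by \eqref{Nij3} and \eqref{Nij4}. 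Finally, the compatibility \eqref{homo1} reads $T + t\,TD(\frkX) = T + t\delta(\frkX) + t\,\frkL_\frkX T + t^2\,\frkL_\frkX\delta(\frkX)$; its $t^1$-term is the very definition \eqref{delta} of $\delta(\frkX)$, and its $t^2$-term is precisely $\Courant{\frkX,\delta(\frkX)v}=0$, which is the last Nijenhuis condition \eqref{Nij5}.

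For the second part, I would use a transfer argument. For all but finitely many $t$, the operator $\Id_\g + t\frkL_\frkX$ is invertible on $\g$, and the relation \eqref{homo1} then rewrites as $T_t = (\Id_\g + t\frkL_\frkX)^{-1} \circ T \circ (\Id_V + tD(\frkX))$. Given the homomorphism property just established, a direct calculation (using that the inverse of a Lie-Yamaguti endomorphism is again a Lie-Yamaguti endomorphism when invertible, and that $\phi$ intertwines $\rho$ and $\mu$) transports the Rota-Baxter axioms \eqref{Ooperator1}--\eqref{Ooperator2} for $T$ into the corresponding axioms for $T_t$. Since both sides of each Rota-Baxter axiom depend polynomially on $t$, the identity on the Zariski-dense set of invertibility extends to all $t$. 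As a consistency check, Proposition~\ref{0cocy} already guarantees that $\delta(\frkX)$ is a $1$-cocycle, which gives exactly the $t^1$-coefficients \eqref{cocy1} and \eqref{cocy} of the deformed Rota-Baxter relations.

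The main obstacle is the combinatorial bulk of the expansions: axiom \eqref{Ooperator2} is cubic in $T_t$, so matching coefficients produces four layers of identities, each requiring a carefully chosen combination of the representation-level identities from Section~2 (notably \eqref{RLYe}, \eqref{RLY5}, \eqref{RLY5a}, \eqref{RLY6}) and the Nijenhuis conditions \eqref{Nije}--\eqref{Nij5}. The transfer route bypasses most of this bulk, but its cost is a careful justification of the polynomial-continuation step; alternatively one can grind out the matching coefficient by coefficient, at the price of several pages of computation paralleling the proof of Proposition~\ref{0cocy}.
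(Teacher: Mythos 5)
Your proposal is correct and follows essentially the same route as the paper: the paper likewise verifies that $(\Id_\g+t\frkL_\frkX,\,\Id_V+tD(\frkX))$ satisfies the homomorphism conditions (this is where the Nijenhuis axioms enter), inverts $\Id_\g+t\frkL_\frkX$ for $t$ small so that $T_t=(\Id_\g+t\frkL_\frkX)^{-1}\circ T\circ(\Id_V+tD(\frkX))$, invokes Lemma \ref{lemma} to conclude that $T_t$ is a relative Rota-Baxter operator there, and then extends to all $t$ because the defining identities \eqref{cocy1}--\eqref{Oopera} are polynomial in $t$. Your Zariski-density phrasing of the last step and your order-by-order accounting of which Nijenhuis condition kills which coefficient are simply more explicit versions of what the paper asserts.
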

One way to prove the above Theorem is to check that $\frkT$ satisfies Eqs. \eqref{cocy1}-\eqref{Oopera}. Now, however, we decide to prove it another way. Let us give a lemma first, which follows from a direct computation.

\begin{lem}\label{lemma}
Let $T:V\to \g$ be a relative Rota-Baxter operator on a Lie-Yamaguti algebra $(\g,[\cdot,\cdot],\Courant{\cdot,\cdot,\cdot})$ with respect to a representation $(V;\rho,\mu)$. Let $\phi_\g:\g \to \g$ be a \LYA ~isomorphism and $\phi_V:V\to V$ an isomorphism between vector spaces such that Eqs. \eqref{homo2}-\eqref{homo3} hold. Then $\phi_\g^{-1}\circ T\circ \phi_V:V\to \g$ is a relative Rota-Baxter operator on the Lie-Yamaguti algebra $(\g,[\cdot,\cdot],\Courant{\cdot,\cdot,\cdot})$ with respect to the representation $(V;\rho,\mu)$.
\end{lem}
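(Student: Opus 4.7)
The plan is to directly verify the two defining identities \eqref{Ooperator1} and \eqref{Ooperator2} for the candidate operator $\tilde T := \phi_\g^{-1}\circ T\circ\phi_V$, using only that $\phi_\g$ preserves the two Lie-Yamaguti brackets on $\g$ and that $(\phi_\g,\phi_V)$ intertwines $\rho$ and $\mu$ as in \eqref{homo2} and \eqref{homo3}. The auxiliary fact I will need is the analogue of \eqref{homo4}, namely
\[
\phi_V D_{\rho,\mu}(x,y)v=D_{\rho,\mu}(\phi_\g(x),\phi_\g(y))\phi_V(v),
\]
which follows directly from \eqref{homo2}, \eqref{homo3} and the formula \eqref{rep} defining $D_{\rho,\mu}$; the same identity for $\phi_V^{-1}$ and $\phi_\g^{-1}$ then follows by replacing $x,y,v$ with $\phi_\g^{-1}(x),\phi_\g^{-1}(y),\phi_V^{-1}(v)$.

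First I would handle the binary bracket. Since $\phi_\g$ is a Lie-Yamaguti isomorphism, so is $\phi_\g^{-1}$, hence
\[
[\tilde Tu,\tilde Tv]=\phi_\g^{-1}[T\phi_V(u),T\phi_V(v)].
\]
Applying \eqref{Ooperator1} for $T$ to the right-hand side yields $\phi_\g^{-1}T\bigl(\rho(T\phi_V u)\phi_V v-\rho(T\phi_V v)\phi_V u\bigr)$. Using the intertwining relation \eqref{homo2} in the form $\rho(T\phi_V u)\phi_V v=\phi_V\bigl(\rho(\phi_V^{-1}T\phi_V u)\,v\bigr)=\phi_V\bigl(\rho(\tilde Tu)v\bigr)$ (after applying $\phi_V^{-1}$ inside using the inverse version of \eqref{homo2} applied to $\phi_\g^{-1}T\phi_V u=\tilde Tu$ — more directly, $\phi_V(\rho(\tilde Tu)v)=\rho(\phi_\g\tilde Tu)\phi_V(v)=\rho(T\phi_V u)\phi_V(v)$ since $\phi_\g\tilde Tu=T\phi_V(u)$), I obtain exactly
\[
[\tilde Tu,\tilde Tv]=\phi_\g^{-1}T\phi_V\bigl(\rho(\tilde Tu)v-\rho(\tilde Tv)u\bigr)=\tilde T\bigl(\rho(\tilde Tu)v-\rho(\tilde Tv)u\bigr).
\]

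For the ternary identity \eqref{Ooperator2} I repeat the same three-step template: push $\phi_\g^{-1}$ outside using that $\phi_\g^{-1}$ preserves $\Courant{\cdot,\cdot,\cdot}$; apply \eqref{Ooperator2} for $T$; then convert each of $D_{\rho,\mu}(T\phi_V u,T\phi_V v)\phi_V w$, $\mu(T\phi_V v,T\phi_V w)\phi_V u$, and $\mu(T\phi_V u,T\phi_V w)\phi_V v$ into $\phi_V$ applied to the corresponding expression in $\tilde T$, by invoking the derived intertwining for $D_{\rho,\mu}$ together with \eqref{homo3} and the identity $\phi_\g\tilde T=T\phi_V$. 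The resulting expression is precisely $\tilde T\bigl(D_{\rho,\mu}(\tilde Tu,\tilde Tv)w+\mu(\tilde Tv,\tilde Tw)u-\mu(\tilde Tu,\tilde Tw)v\bigr)$, completing the verification.

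The proof is therefore just a formal ``transport of structure'' along the isomorphism pair $(\phi_\g,\phi_V)$, and no real obstacle arises. The only place requiring mild care is the bookkeeping for $D_{\rho,\mu}$ under $\phi_V$, but this is exactly the content of \eqref{homo4} that has already been recorded; beyond that, each step in the computation is an invocation of one of \eqref{homo2}, \eqref{homo3}, the homomorphism property of $\phi_\g$, or the Rota-Baxter identities for $T$.
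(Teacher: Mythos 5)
Your proposal is correct and coincides with what the paper intends: the paper offers no written proof for Lemma \ref{lemma} beyond the remark that it ``follows from a direct computation,'' and your verification of \eqref{Ooperator1} and \eqref{Ooperator2} for $\phi_\g^{-1}\circ T\circ\phi_V$ via \eqref{homo2}, \eqref{homo3}, the derived identity \eqref{homo4} for $D_{\rho,\mu}$, and the homomorphism property of $\phi_\g$ is exactly that computation. (The only blemish is the typo $\phi_V^{-1}T\phi_V u$ in your first parenthetical, which is not well-typed; your ``more directly'' rederivation using $\phi_\g\circ\tilde T=T\circ\phi_V$ is the correct statement and is what the argument actually uses.)
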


\noindent
\emph{Proof of Theorem {\rm\ref{Nijenhuis}:}} For any Nijenhuis element $\frkX\in \Nij(T) \subset \wedge^2\g$, we define
\begin{eqnarray}
\frkT=\delta\frkX.
\end{eqnarray}
Since $\frkX$ is a Nijenhuis element, for all $t$, $T_t=T+t\frkT$ satisfies
\begin{eqnarray*}
({\Id}_\g+t\frkL_\frkX)\circ T_t&=&T\circ({\Id}_V+tD(\frkX)),\\
~({\Id}_V+tD(\frkX))\rho(x)v&=&\rho(({\Id}_\g+t\frkL_\frkX)(x))({\Id}_V+tD(\frkX))(v),\\
~({\Id}_V+tD(\frkX))\mu(x,y)v&=&\mu({\Id}_\g+t\frkL_\frkX)(x),{\Id}_\g+t\frkL_\frkX)(y))({\Id}_V+tD(\frkX))(v),\quad \forall x,y \in\g, ~v \in V.
\end{eqnarray*}
For $t$ sufficiently small, we see that ${\Id}_\g+t\frkL_\frkX$ is a Lie-Yamaguti algebra isomorphism and that ${\Id}_V+tD(\frkX)$ is an isomorphism between vector spaces. Thus, we have
$$T_t=({\Id}_\g+t\frkL_\frkX)^{-1}\circ T\circ ({\Id}_V+tD(\frkX)).$$
By Lemma \ref{lemma}, we see that $T_t$ is a relative Rota-Baxter operator on the Lie-Yamaguti algebra $(\g,[\cdot,\cdot],\Courant{\cdot,\cdot,\cdot})$ with respect to $(V;\rho,\mu)$ for $t$ sufficiently small. Thus $\frkT=\delta\frkX$ satisfies Eqs. \eqref{cocy1}-\eqref{Oopera}. Therefore, $T_t$ is a relative Rota-Baxter operator for all $t$, which implies that $\frkT$ generates a liner deformation of $T$. It is easy to see that this deformation is trivial.
\qed

\vspace{2mm}

In the sequel, we are going to consider deformations of Rota-Baxter operators on Lie-Yamaguti algebras which is a special case of relative Rota-Baxter operators. Thus the conclusions of Rota-Baxter operators are direct corollaries of the former. We will also give some examples of Nijenhuis elements of Rota-Baxetr operators at the end of this section. Recall that a Rota-Baxter operator on a Lie-Yamaguti algebra $(\g,[\cdot,\cdot],\Courant{\cdot,\cdot,\cdot})$ is a relative Rota-Baxter operator with respect to the adjoint representation $(\g;\ad,\frkR)$, i.e., a linear map $R:\g \to \g$ satisfying
 \begin{eqnarray}
 [Rx,Ry]&=&R\Big([Rx,y]-[Ry,x]\Big),\\
 \Courant{Rx,Ry,Rz}&=&R\Big(\Courant{Rx,Ry,z}+\Courant{x,Ry,Rz}+\Courant{Rx,y,Rz}\Big),\quad \forall x,y,z \in \g.
 \end{eqnarray}

 Let $R$ be a Rota-Baxter operator on a Lie-Yamaguti algebra $(\g,[\cdot,\cdot],\Courant{\cdot,\cdot,\cdot})$, then the induced pre-Lie-Yamaguti algebra structure on $\g$ is given by $x*_Ry=[Rx,y],~\{x,y,z\}_R=\Courant{x,Ry,Rz}$ for all $x,y,z \in \g$, and its sub-adjacent Lie-Yamaguti algebra structure is given by $[x,y]_R=[Rx,y]-[Ry,x],~\Courant{x,y,z}_R=\Courant{Rx,Ry,z}+\Courant{x,Ry,Rz}+\Courant{Rx,y,Rz}$ for all $x,y,z \in \g$.

 \begin{pro}
 Let $R$ be a Rota-Baxter operator on a Lie-Yamaguti algebra $(\g,[\cdot,\cdot],\Courant{\cdot,\cdot,\cdot})$, then $\varrho:\g \to \gl(\g),~ \varpi:\otimes^2\g \to \gl(\g)$ given by
 \begin{eqnarray}
 \varrho(x)y&=&[Rx,y]-R\Big([x,y]\Big),\\
 \varpi(x,y)z&=&\Courant{z,Rx,Ry}-R\Big(\Courant{z,Rx,y}-\Courant{x,Ry,z}\Big), \quad \forall x,y,z \in \g
 \end{eqnarray}
 forms a representation of the sub-adjacent Lie-Yamaguti algebra $(\g,[\cdot,\cdot]_R,\Courant{\cdot,\cdot,\cdot}_R)$ on itself, where
 \begin{eqnarray}
 D_{\varrho,\varpi}(x,y)z=\Courant{Rx,Ry,z}-R\Big(\Courant{x,Ry,z}-\Courant{y,Rx,z}\Big),\quad \forall x,y,z \in \g.
 \end{eqnarray}
 \end{pro}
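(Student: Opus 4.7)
The plan is to recognize this proposition as the specialization of Theorem \ref{represent} to the adjoint representation. By definition, a Rota-Baxter operator $R:\g\to\g$ is a relative Rota-Baxter operator with respect to the adjoint representation $(\g;\ad,\frkR)$ of Example \ref{ad}. Theorem \ref{represent} therefore immediately yields a representation $(\g;\varrho,\varpi)$ of the sub-adjacent Lie-Yamaguti algebra $(\g,[\cdot,\cdot]_R,\Courant{\cdot,\cdot,\cdot}_R)$ on $\g$, where $\varrho$, $\varpi$, and $D_{\varrho,\varpi}$ are the maps defined by \eqref{repre1}, \eqref{repre2}, \eqref{repre3} respectively. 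All that remains is to unpack those three formulas in the present setting.

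The unpacking is a direct substitution. Set $T=R$, $V=\g$, $\rho=\ad$, $\mu=\frkR$, and consequently $D_{\rho,\mu}=\frkL$. Using $\ad_y x=[y,x]$, formula \eqref{repre1} becomes
\[
\varrho(x)y=[Rx,y]+R(\ad_y x)=[Rx,y]-R([x,y]),
\]
which is the asserted expression. Using the identities $\frkL_{a,b}c=\Courant{a,b,c}$ from \eqref{lef} and $\frkR_{a,b}c=\Courant{c,a,b}$ from Example \ref{ad}, formula \eqref{repre2} translates directly into the asserted expression for $\varpi(x,y)z$, and formula \eqref{repre3} produces
\[
D_{\varrho,\varpi}(x,y)z=\Courant{Rx,Ry,z}-R\bigl(\Courant{x,Ry,z}-\Courant{y,Rx,z}\bigr),
\]
as claimed.

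Because every axiom of a Lie-Yamaguti representation is already built into Theorem \ref{represent}, no further verification is needed; in particular, no direct check of the identities \eqref{RLYb}--\eqref{RLY5} for $(\g;\varrho,\varpi)$ is required. The only genuine care lies in tracking the two distinct antisymmetry patterns of $\ad$ and $\frkR$ (and the position conventions embedded in \eqref{lef}) so that the signs in the final formulas come out exactly as stated. There is no real obstacle; the proposition is essentially a corollary of Lemma \ref{represent1} and Theorem \ref{represent} applied to the adjoint representation.
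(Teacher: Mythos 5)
Your strategy coincides with the paper's: the proposition appears there with no separate proof, precisely because it is meant to be Theorem \ref{represent} (together with Lemma \ref{represent1}) specialized to the adjoint representation $(\g;\ad,\frkR)$, and your substitution $T=R$, $V=\g$, $\rho=\ad$, $\mu=\frkR$, $D_{\rho,\mu}=\frkL$ is exactly the intended argument. The formulas for $\varrho$ and for $D_{\varrho,\varpi}$ do come out verbatim as you say. However, your claim that \eqref{repre2} ``translates directly into the asserted expression'' for $\varpi$ is not accurate for the formula as printed. Since $\frkR_{a,b}c=\Courant{c,a,b}$, the term $\mu(x,Tv)u$ in \eqref{repre2} specializes to $\frkR_{z,Ry}x=\Courant{x,z,Ry}$, so the direct translation reads
\[
\varpi(x,y)z=\Courant{z,Rx,Ry}-R\bigl(\Courant{z,Rx,y}-\Courant{x,z,Ry}\bigr),
\]
whereas the proposition prints $\Courant{x,Ry,z}$ in the last slot. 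The ternary bracket is skew-symmetric only in its first two arguments, so $\Courant{x,z,Ry}$ and $\Courant{x,Ry,z}$ are genuinely different expressions; the printed statement is presumably a typo, but since the whole content of this ``proof by specialization'' is the careful tracking of slot positions --- the very point you flag as the only delicate step --- you should either derive the corrected formula explicitly or note the discrepancy rather than assert an exact match.
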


 \begin{defi}
 Let $R$ be a Rota-Baxter operator on a Lie-Yamaguti algebra $(\g,[\cdot,\cdot],\Courant{\cdot,\cdot,\cdot})$. Then the cohomology of the cochain complex $(\oplus_{k\geqslant0}\huaC^k_R(\g,\g),\de)~(k\geqslant0)$ given in Definition \ref{cohomology}, is called the {\bf cohomology of Rota-Baxter operator $R$}. The resulting $n$-th cohomology group is denoted by $\huaH^n_R(V,\g)$.
 \end{defi}

 \begin{defi}
 Let $R$ be a Rota-Baxter operator on a Lie-Yamaguti algebra $(\g,[\cdot,\cdot],\Courant{\cdot,\cdot,\cdot})$.
 \begin{itemize}
 \item[\rm (i)] Let $\huaR:\g \to \g$ be a linear map. If for all $t\in \mathbb K$, $R_t:=R+t\huaR$ is a Rota-Baxter operator on $\g$, we say that $\huaR$ generates a {\bf linear deformation} of $R$.
     \item[\rm (ii)] Let $R_t^1:=R+t\huaR_1$ and $R_t^2:=R+t\huaR_2$ be two linear deformations of $R$. They are said to be {\bf equivalent} if there exists an element $\frkX\in \wedge^2\g$ such that $({\Id}_\g+t\frkL_\frkX,{\Id}_V+tD(\frkX))$ is a homomorphism from $R_t^2$ to $R_t^1$. In particular, a deformation $R_t=R+t\huaR$ is said to be {\bf trivial} if there exists an element $\frkX\in \wedge^2\g$ such that $({\Id}_\g+t\frkL_\frkX,{\Id}_V+tD(\frkX))$ is a homomorphism from $R_t$ to $R$.
 \end{itemize}
 \end{defi}

 \begin{pro}
  Let $R$ be a Rota-Baxter operator on a Lie-Yamaguti algebra $(\g,[\cdot,\cdot],\Courant{\cdot,\cdot,\cdot})$. If $\huaR$ generates a linear deformation of $R$, then $\huaR$ is a $1$-cocycle. Moreover, if two linear deformations of $R$ generated by $\huaR_1$ and $\huaR_2$ respectively are equivalent, then $\huaR_1$ and $\huaR_2$ are in the same cohomology class of $\huaH_R^1(V,\g)$.
 \end{pro}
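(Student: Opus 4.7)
The plan is to deduce this proposition as a direct specialization of the relative Rota-Baxter case, taking the representation to be the adjoint representation $(\g;\ad,\frkR)$. Since a Rota-Baxter operator on $(\g,[\cdot,\cdot],\Courant{\cdot,\cdot,\cdot})$ is precisely a relative Rota-Baxter operator $R\colon\g\to\g$ with respect to $(\g;\ad,\frkR)$, all previous results apply verbatim with $V=\g$, $\rho=\ad$, $\mu=\frkR$, and $D_{\rho,\mu}=\frkL$.

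First I would establish the $1$-cocycle claim. Suppose $\huaR$ generates a linear deformation of $R$, i.e. $R_t=R+t\huaR$ is a Rota-Baxter operator for all $t$. Expanding the defining identities $[R_tx,R_ty]=R_t([R_tx,y]-[R_ty,x])$ and the ternary analogue in powers of $t$ and collecting the $t^1$-terms yields precisely the specializations of \eqref{cocy1} and \eqref{cocy} in the adjoint setting. These two equations together are, by the very definition of the coboundary $\delta^R=(\delta^R_{\rm I},\delta^R_{\rm II})$ on $\huaC^1_R(\g,\g)=\Hom(\g,\g)$, equivalent to $\delta^R(\huaR)=0$. Hence $\huaR\in\huaZ^1_R(\g,\g)$.

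Next I would handle the equivalence claim. Assume $R_t^1=R+t\huaR_1$ and $R_t^2=R+t\huaR_2$ are equivalent linear deformations via $(\Id_\g+t\frkL_\frkX,\Id_\g+tD(\frkX))$ for some $\frkX\in\wedge^2\g$. Applying condition \eqref{homo1} to this homomorphism, i.e.
\[
R_t^1\bigl((\Id_\g+tD(\frkX))x\bigr)=(\Id_\g+t\frkL_\frkX)R_t^2(x),
\]
and comparing $t^1$-coefficients gives exactly the adjoint-case analogue of \eqref{cocycle}:
\[
(\huaR_2-\huaR_1)(x)=R(D(\frkX)x)-\Courant{\frkX,Rx}=\delta(\frkX)(x)\qquad\forall x\in\g.
\]
Thus $\huaR_2-\huaR_1=\delta(\frkX)\in\huaB^1_R(\g,\g)$, so $[\huaR_1]=[\huaR_2]$ in $\huaH^1_R(\g,\g)$.

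The proof is essentially a transcription of Theorem \ref{thm1} and the computation preceding it into the adjoint-representation setting, so there is no real obstacle; the only bookkeeping to be careful about is verifying that the relevant specializations of \eqref{Ooper1} and \eqref{Oopera} (the $t^2$- and $t^3$-terms) are automatically encoded in the structural identities and therefore do not add extra constraints beyond the cocycle condition when formulating the first statement.
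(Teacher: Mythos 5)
Your proposal is correct and follows exactly the route the paper intends: the paper states this proposition without proof precisely because it is the specialization of Theorem \ref{thm1} and the preceding $1$-cocycle observation (that \eqref{cocy1} and \eqref{cocy} together say $\delta^T(\frkT)=0$, and that \eqref{cocycle} gives $\frkT_2-\frkT_1=\delta(\frkX)$) to the adjoint representation $(\g;\ad,\frkR)$. Your sign and coefficient bookkeeping in the $t^1$-comparison matches the paper's \eqref{cocycle} and the definition of $\delta(\frkX)$, so nothing further is needed.
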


 \begin{defi}
 Let $R$ be a Rota-Baxter operator on a Lie-Yamaguti algebra $(\g,[\cdot,\cdot],\Courant{\cdot,\cdot,\cdot})$. An element $\frkX\in \wedge^2\g$ is a {\bf Nijenhuis element} if it satisfies the conditions \eqref{Nije}-\eqref{Nij2}, and the following condition
 $$\Courant{\frkX,R(\Courant{\frkX,y})-\Courant{\frkX,Ry}}, \quad \forall y \in \g.$$
 \end{defi}

At the end of this subsection, we will give two examples of Nijenhuis elements associated to Rota-Baxter operators.
 \begin{ex}
{\rm Let $(\g,[\cdot,\cdot],\Courant{\cdot,\cdot,\cdot})$ be a 2-dimensional Lie-Yamaguti algebra, whose nontrivial brackets are given by, with respect to a basis $\{e_1,e_2\}$
$$[e_1,e_2]=e_1,~\quad \Courant{e_1,e_2,e_2}=e_1.$$
Moreover,
$$R=
\begin{pmatrix}
0 & a\\
0 & b
\end{pmatrix}$$
is a Rota-Baxter operator on $\g$. The by a direct computation, any element in $\wedge^2\g$ is a Nijenhuis element of $R$.}
 \end{ex}

 \begin{ex}
 {\rm Let $\g$ be a 4-dimensional Lie-Yamaguti algebra with a basis $\{e_1,e_2,e_3,e_4\}$ defined by
 $$[e_1,e_2]=2e_4,~\quad \Courant{e_1,e_2,e_1}=e_4.$$
 And
 $$R=
 \begin{pmatrix}
 0 & a_{12}& 0 & 0 \\
 0 & 0& 0 & 0\\
 a_{31} &a_{32} & a_{33} & a_{34}\\
 a_{41} &a_{42} & a_{43} & a_{44}
 \end{pmatrix}$$
 is a Rota-Baxter operator on $\g$. Then any element in $\wedge^2\g$ is a Nijenhuis element of $R$. In particular,
 \begin{eqnarray*}
 \frkX_1=e_1\wedge e_2,\quad \frkX_2=e_1\wedge e_3,\quad \frkX_3=e_1\wedge e_4,\\
 \frkX_4=e_2\wedge e_3,\quad \frkX_5=e_2\wedge e_4,\quad \frkX_6=e_3\wedge e_4,
 \end{eqnarray*}
 are all Nijenhuis elements of $R$.}
 \end{ex}

\subsection{Formal deformations of relative Rota-Baxter operators on Lie-Yamaguti algebras}
In this subsection, we study formal deformations of relative Rota-Baxter operators on Lie-Yamaguti algebras. Let $\mathbb K[[t]]$ be a ring of power series of one variable $t$. For any linear vector space $V$, $V[[t]]$ denotes the vector space of formal power series  of $t$ with the coefficients in $V$. If $(\g,[\cdot,\cdot],\Courant{\cdot,\cdot,\cdot})$ is a Lie-Yamaguti algebra, then there is a Lie-Yamaguti algebra structure over the ring $\mathbb K[[t]]$ on $\g[[t]]]$ given by
\begin{eqnarray}
\Big[\sum_{i=0}^\infty x_it^i,\sum_{j=0}^\infty y_jt^j\Big]&=&\sum_{s=0}^\infty\sum_{i+j=s} \Big[x_i,y_j\Big]t^s,\\
~\small{\Courant{\sum_{i=0}^\infty x_it^i,\sum_{j=0}^\infty y_jt^j,\sum_{k=0}^\infty z_kt^k}}&=&\sum_{s=0}^\infty\sum_{i+j+k=s}\Courant{x_i,y_j,z_k}t^s,\quad \forall x_i,y_j,z_k\in \g.
\end{eqnarray}
For any representation $(V;\rho,\mu)$ of a Lie-Yamaguti algebra $(\g,[\cdot,\cdot],\Courant{\cdot,\cdot,\cdot})$, there is a nature representation of the Lie-Yamaguti algebra $\g[[t]]$ on the  $\mathbb K[[t]]$-module $V[[t]]$ given by
\begin{eqnarray}
\rho\Big(\sum_{i=0}^\infty x_it^i\Big)\Big(\sum_{k=0}^\infty v_kt^k\Big)&=&\sum_{s=0}^\infty\sum_{i+k=s}\rho(x_i)v_kt^s,\\
\mu\Big(\sum_{i=0}^\infty x_it^i,\sum_{j=0}^\infty y_jt^j\Big)\Big(\sum_{k=0}^\infty v_kt^k\Big)&=&\sum_{s=0}^\infty\sum_{i+j+k=s}\mu(x_i,x_j)v_kt^s, \quad \forall x_i,y_j\in \g,~v_k\in V.
\end{eqnarray}

Let $T$ be a relative Rota-Baxter operator on a Lie-Yamaguti algebra $(\g,[\cdot,\cdot],\Courant{\cdot,\cdot,\cdot})$ with respect to a representation $(V;\rho,\mu)$. Consider the power series
\begin{eqnarray}
\label{defor:T}T_t=\sum_{i=0}^\infty\frkT_it^i,\quad \frkT_i\in \Hom(V,\g),
\end{eqnarray}
that is, $T_t\in \Hom_{\mathbb K}(V,\g)[[t]]=\Hom_{\mathbb K}(V,\g[[t]])$.

\begin{defi}
Let $T$ be a relative Rota-Baxter operator on a Lie-Yamaguti algebra $(\g,[\cdot,\cdot],\Courant{\cdot,\cdot,\cdot})$ with respect to a representation $(V;\rho,\mu)$. Suppose that $T_t$ is given by \eqref{defor:T}, where $\frkT_0=T$, and $T_t$ also satisfies
\begin{eqnarray}
\label{deforO1}[T_tu,T_tv]&=&T_t\Big(\rho(T_tu)v-\rho(T_tv)u\Big),\\
\label{deforO2}\Courant{T_tu,T_tv,T_tw}&=&T_t\Big(D_{\rho,\mu}(T_tu,T_tv)w+\mu(T_tv,T_tw)u-\mu(T_tu,T_tw)v\Big), \quad \forall u,v,w \in V.
\end{eqnarray}
We say that $T_t$ is a {\bf formal deformation} of $T$.
\end{defi}

 Recall that a formal deformation of a \LYA ~$(\g,\br,\ltp))$ is a pair of power series $f_t=\sum_{i=0}^\infty f_it^i$ and $g_t=\sum_{j=0}^\infty g_jt^j$, where $f_0=[\cdot,\cdot]$ and $g_0=\Courant{\cdot,\cdot,\cdot}$, and $(f_t,g_t)$ defines a \LYA ~structure on $\g[[t]]$ (\cite{L.CHEN}). Based on the relationship between the relative Rota-Baxter operators and the pre-Lie-Yamaguti algebras, we have the following proposition.
 \begin{pro}
 If $T_t=\sum_{i=0}^\infty\frkT_it^i$ is a formal deformation of a relative Rota-Baxter operator $T$ on a Lie-Yamaguti algebra $(\g,[\cdot,\cdot],\Courant{\cdot,\cdot,\cdot})$ with respect to $(V;\rho,\mu)$. Then $([\cdot,\cdot]_{T_t},\Courant{\cdot,\cdot,\cdot}_{T_t})$ defined by
 \begin{eqnarray}
 [u,v]_{T_t}&=&\sum_{i=0}^\infty \Big(\rho(\frkT_iu)v-\rho(\frkT_iv\Big)t^i,\\
 \Courant{u,v,w}_{T_t}&=&\sum_{k=0}^\infty\sum_{i+j=k}\Big(D_{\rho,\mu}(\frkT_iu,\frkT_jv)w+\mu(\frkT_iv,\frkT_jw)u-\mu(\frkT_iu,\frkT_jw)v\Big)t^k, \quad u,v,w\in V,
 \end{eqnarray}
 is a formal deformation of the Lie-Yamaguti algebra $(V,[\cdot,\cdot]_T,\Courant{\cdot,\cdot,\cdot}_T)$.
 \end{pro}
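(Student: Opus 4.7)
The strategy is to recognize that a formal deformation $T_t = \sum_{i=0}^{\infty} \frkT_i t^i$ of the relative Rota-Baxter operator $T$ is precisely a relative Rota-Baxter operator $T_t : V[[t]] \to \g[[t]]$ over the ground ring $\mathbb{K}[[t]]$, with respect to the $\mathbb{K}[[t]]$-linear extension of the representation $(V;\rho,\mu)$ to $(V[[t]];\rho,\mu)$. Indeed, the defining equations \eqref{deforO1} and \eqref{deforO2} are nothing other than the Rota-Baxter axioms \eqref{Ooperator1} and \eqref{Ooperator2} applied to $T_t$ viewed over $\mathbb{K}[[t]]$. Therefore, the earlier construction that produces a Lie-Yamaguti algebra structure on the domain of a relative Rota-Baxter operator applies verbatim to $T_t$, yielding a Lie-Yamaguti algebra structure on $V[[t]]$.

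With this observation in hand, I would carry out the proof in three short steps. First, verify that $\g[[t]]$ is a Lie-Yamaguti algebra over $\mathbb{K}[[t]]$ and that $(V[[t]];\rho,\mu)$ (with maps extended $\mathbb{K}[[t]]$-linearly) is a representation of $\g[[t]]$ — this is routine since the defining axioms of Definition \ref{defi:representation} are preserved under termwise extension. Second, apply the proposition that any relative Rota-Baxter operator $S : V \to \g$ produces the bracket
\begin{eqnarray*}
[u,v]_S &=& \rho(Su)v - \rho(Sv)u, \\
\Courant{u,v,w}_S &=& D_{\rho,\mu}(Su,Sv)w + \mu(Sv,Sw)u - \mu(Su,Sw)v,
\end{eqnarray*}
to $S = T_t$. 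Third, expand $T_t u = \sum_i \frkT_i(u) t^i$ inside these formulas and collect powers of $t$, which reproduces exactly the stated expressions for $[u,v]_{T_t}$ and $\Courant{u,v,w}_{T_t}$.

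Finally, to confirm that this is a \emph{formal deformation} of the Lie-Yamaguti algebra $(V,[\cdot,\cdot]_T,\Courant{\cdot,\cdot,\cdot}_T)$ in the sense recalled just before the proposition, I would isolate the $t^0$ coefficient in each of the two series. Since $\frkT_0 = T$, the degree-zero parts of $[\cdot,\cdot]_{T_t}$ and $\Courant{\cdot,\cdot,\cdot}_{T_t}$ are exactly $[\cdot,\cdot]_T$ and $\Courant{\cdot,\cdot,\cdot}_T$, as required.

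The only step that requires any care — and thus the main (mild) obstacle — is the bookkeeping of indices: in the ternary bracket one must confirm that the double sum arising from $D_{\rho,\mu}(T_t u, T_t v) w$ together with the two $\mu$-terms genuinely collapses to the announced single sum $\sum_{k} \sum_{i+j=k}(\cdots) t^k$. This is a direct consequence of the $\mathbb{K}[[t]]$-bilinearity of $\rho$, $\mu$ (and hence of $D_{\rho,\mu}$) on $V[[t]]$, but writing it out cleanly is the only computational content of the argument.
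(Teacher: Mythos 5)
Your proposal is correct and follows exactly the route the paper intends: the paper states this proposition without proof, merely invoking the relationship between relative Rota-Baxter operators and their induced (sub-adjacent) Lie-Yamaguti structures, which is precisely your base-change argument to $\mathbb{K}[[t]]$ followed by an application of the earlier proposition to $S=T_t$ and an inspection of the $t^0$ coefficient. The only point worth making explicit in a written-out version is the one you already flag, namely that the equational axioms of Definition \ref{defi:representation} and of the sub-adjacent construction hold verbatim over the ring $\mathbb{K}[[t]]$, so the index bookkeeping in the ternary bracket is routine.
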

 Substituting the Eq. \eqref{defor:T} into Eqs. \eqref{deforO1} and \eqref{deforO2} and comparing the coefficients of $t^s~(\forall s\geqslant0)$, we have for all $u,v,w \in V$,
 \begin{eqnarray}
 \label{sys1}&&\sum_{i+j=s,
 \atop i,j\geqslant0}\Big([\frkT_iu,\frkT_jv]-\frkT_i\big(\rho(\frkT_ju)v-\rho(\frkT_jv)u\big)\Big)t^s=0,\\
 \label{sys2}&&\sum_{i+j+k=s,
 \atop i,j,k\geqslant0}\Big(\Courant{\frkT_iu,\frkT_jv,\frkT_kw}-\frkT_i\big(D_{\rho,\mu}(\frkT_ju,\frkT_kv)w+\mu(\frkT_jv,\frkT_kw)u-\mu(\frkT_ju,\frkT_kw)v\big)\Big)t^s=0.
 \end{eqnarray}

 \begin{pro}\label{formal}
 If $T_t=\sum_{i=0}^\infty\frkT_it^i$ is a formal deformation of a relative Rota-Baxter operator $T$ on a Lie-Yamaguti algebra $(\g,[\cdot,\cdot],\Courant{\cdot,\cdot,\cdot})$ with respect to $(V;\rho,\mu)$. Then $\delta^T\frkT_1=0$, i.e., $\frkT_1\in \huaC^1_T(V,\g)$ is a $1$-cocycle of the relative Rota-Baxter operator $T$.
 \end{pro}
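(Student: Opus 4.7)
The plan is to extract the coefficient of $t^1$ from each of the two defining identities \eqref{sys1} and \eqref{sys2} of a formal deformation, and then to identify these coefficients with $\delta^T_{\rm I}(\frkT_1)$ and $\delta^T_{\rm II}(\frkT_1)$ respectively.

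First I would look at \eqref{sys1} with $s=1$. Since the only index decompositions $i+j=1$ with $i,j\geqslant 0$ are $(0,1)$ and $(1,0)$, and $\frkT_0=T$, I obtain
\[
[Tu,\frkT_1 v]+[\frkT_1 u,Tv]-T\bigl(\rho(\frkT_1 u)v-\rho(\frkT_1 v)u\bigr)-\frkT_1\bigl(\rho(Tu)v-\rho(Tv)u\bigr)=0.
\]
Using the skew-symmetry of $[\cdot,\cdot]$ on the second term and recalling $[u,v]_T=\rho(Tu)v-\rho(Tv)u$, I rewrite this as
\[
[Tu,\frkT_1(v)]-[Tv,\frkT_1(u)]+T\bigl(\rho(\frkT_1(v))u-\rho(\frkT_1(u))v\bigr)-\frkT_1([u,v]_T)=0,
\]
which is exactly $\bigl(\delta^T_{\rm I}(\frkT_1)\bigr)(u,v)$ according to the formula for $\delta^T_{\rm I}$ on $1$-cochains.

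Next I would perform the same extraction on \eqref{sys2} at $s=1$. The triples $(i,j,k)$ with $i+j+k=1$ are $(1,0,0),(0,1,0),(0,0,1)$, and again $\frkT_0=T$. Collecting the terms one gets
\begin{eqnarray*}
&&\Courant{\frkT_1 u,Tv,Tw}+\Courant{Tu,\frkT_1 v,Tw}+\Courant{Tu,Tv,\frkT_1 w}-\frkT_1\bigl(\Courant{u,v,w}_T\bigr)\\
&&-T\bigl(D(\frkT_1 u,Tv)w+D(Tu,\frkT_1 v)w+\mu(\frkT_1 v,Tw)u+\mu(Tv,\frkT_1 w)u\\
&&\qquad-\mu(\frkT_1 u,Tw)v-\mu(Tu,\frkT_1 w)v\bigr)=0.
\end{eqnarray*}
Applying the skew-symmetry of $\Courant{\cdot,\cdot,\cdot}$ in its first two arguments ($\Courant{Tu,\frkT_1 v,Tw}=-\Courant{\frkT_1 v,Tu,Tw}$) and the skew-symmetry of $D$ ($D(Tu,\frkT_1 v)=-D(\frkT_1 v,Tu)$), this coincides precisely with $\bigl(\delta^T_{\rm II}(\frkT_1)\bigr)(u,v,w)$.

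Hence both $\delta^T_{\rm I}(\frkT_1)=0$ and $\delta^T_{\rm II}(\frkT_1)=0$, so $\frkT_1$ is a $1$-cocycle in $\huaC^1_T(V,\g)$. The only real obstacle is the bookkeeping: making sure the sign conventions for the skew-symmetries of $[\cdot,\cdot]$, $\Courant{\cdot,\cdot,\cdot}$ and $D_{\rho,\mu}$ line up correctly with the explicit formulas for $\delta^T_{\rm I}$ and $\delta^T_{\rm II}$ recorded just after Theorem \ref{represent}. Once this is handled, the statement follows immediately without invoking anything beyond $\frkT_0=T$ and the expansions \eqref{sys1}, \eqref{sys2}.
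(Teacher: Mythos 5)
Your proof is correct and follows essentially the same route as the paper's: extract the coefficient of $t$ in \eqref{sys1} and \eqref{sys2} and identify the resulting identities with the explicit formulas for $\delta^T_{\rm I}(\frkT_1)$ and $\delta^T_{\rm II}(\frkT_1)$. If anything, your bookkeeping is more careful than the paper's, whose displayed $s=1$ identity contains a sign slip (it shows $-[\frkT_1u,Tv]$ where $+[\frkT_1u,Tv]$ should appear); your version with the skew-symmetry rewrites is the correct one.
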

 \begin{proof}
 When $s=1$, Eqs. \eqref{sys1} and \eqref{sys2} are equivalent to
 \begin{eqnarray*}
 ~ &&[Tu,\frkT_1v]-[\frkT_1u,Tv]\\
 ~ &=&T\big(\rho(\frkT_1u)v-\rho(\frkT_1v)u\big)+\frkT_1\big(\rho(Tu)v-\rho(Tv)u\big),\\
 ~ &&\\
 ~ &&\Courant{\frkT_1u,Tv,Tw}+\Courant{Tu,\frkT_1v,Tw}+\Courant{Tu,Tv,\frkT_1w}\\
 ~ &=&\frkT_1\Big(D_{\rho,\mu}(Tu,Tv)w+\mu(Tv,Tw)u-\mu(Tu,Tw)v\Big)\\
 ~ &&+T\Big(D_{\rho,\mu}(\frkT_1u,Tv)w+\mu(\frkT_1v,Tw)u-\mu(\frkT_1u,Tw)v\Big)\\
 ~ &&+T\Big(D_{\rho,\mu}(Tu,\frkT_1v)w+\mu(Tv,\frkT_1w)u-\mu(Tu,\frkT_1w)v\Big), \quad \forall u,v,w \in V,
 \end{eqnarray*}
 which implies that $\delta^T(\frkT_1)=0$, i.e., $\frkT_1$ is a $1$-cocycle of $\delta^T.$
 \end{proof}

 \begin{defi}
 Let $T$ be a relative Rota-Baxter operator on a Lie-Yamaguti algebra $(\g,[\cdot,\cdot],\Courant{\cdot,\cdot,\cdot})$ with respect to a representation $(V;\rho,\mu)$. The $1$-cocycle $\frkT_1$ is called the {\bf infinitesimal} of the formal deformation $T_t=\sum_{i=0}^\infty \frkT_it^i$ of $T.$
 \end{defi}

 In the sequel, let us give the notion of equivalent formal deformations of relative Rota-Baxter operators on Lie-Yamaguti algebras.

 \begin{defi}
 Let $T$ be a relative Rota-Baxter operator on a Lie-Yamaguti algebra $(\g,[\cdot,\cdot],\Courant{\cdot,\cdot,\cdot})$ with respect to a representation $(V;\rho,\mu)$. Two formal deformations $\bar T_t=\sum_{i=0}^\infty \bar{\frkT_i}t^i$ and $T_t=\sum_{i=0}^\infty \frkT_it^i$, where $\bar{\frkT_0}=\frkT_0=T$ are said to be {\bf equivalent} if there exist $\frkX\in \wedge^2\g,~\phi_i\in \gl(\g)$ and $\varphi_i\in \gl(V),~i\geqslant2,$ such that for
 \begin{eqnarray}
 \label{equivalent}\phi_t={\Id}_\g+t\frkL_\frkX+\sum_{i=2}^\infty \phi_it^i,~\quad \varphi_t={\Id}_V+tD(\frkX)+\sum_{i=2}^\infty \varphi_it^i,
 \end{eqnarray}
 the following hold:
 \begin{eqnarray}
 [\phi_t(x),\phi_t(y)]=\phi_t[x,y], \quad \Courant{\phi_t(x),\phi_t(y),\phi_t(z)}=\phi_t\Courant{x,y,z}, \quad \forall x,y,z \in \g,
 \end{eqnarray}
 \begin{eqnarray}
 \varphi_t\rho(x)v=\rho(\phi_t(x))(\varphi_t(v)), \quad \varphi_t\mu(x,y)v=\mu(\phi_t(x),\phi_t(y))(\varphi_t(v)), \quad \forall x,y \in \g,~ v \in V,
 \end{eqnarray}
 and
 \begin{eqnarray}
 \label{eq3}T_t\circ \varphi_t=\phi_t\circ \bar{T_t}
 \end{eqnarray}
 as $\mathbb K[[t]]$-module maps.
 \end{defi}

The following theorem is the second key conclusion in this section.
 \begin{thm}\label{thm2}
 Let $T$ be a relative Rota-Baxter operator on a Lie-Yamaguti algebra $(\g,[\cdot,\cdot],\Courant{\cdot,\cdot,\cdot})$ with respect to a representation $(V;\rho,\mu)$. If two formal deformations $\bar T_t=\sum_{i=0}^\infty \bar{\frkT_i}t^i$ and $T_t=\sum_{i=0}^\infty \frkT_it^i$ are equivalent, then their infinitesimals are in the same cohomology classes.
 \end{thm}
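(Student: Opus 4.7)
The plan is to extract the coefficient of $t^1$ from the equivalence identity \eqref{eq3} and to recognize the resulting difference between the two infinitesimals as precisely the $1$-coboundary $\delta(\frkX)$ introduced in \eqref{delta}. First I will expand both sides of $T_t\circ\varphi_t=\phi_t\circ\bar T_t$ as formal power series in $t$, substituting the explicit forms of $\phi_t$ and $\varphi_t$ from \eqref{equivalent}. The $t^0$-term on both sides simply recovers the identity $\bar{\frkT}_0=\frkT_0=T$. Matching the $t^1$-terms then yields
\begin{equation*}
\frkT_1 + T\circ D(\frkX) \;=\; \bar{\frkT}_1 + \frkL_{\frkX}\circ T,
\end{equation*}
since the higher power series coefficients $\phi_i,\varphi_i$ (with $i\geqslant 2$) contribute only to $t^i$ with $i\geqslant 2$.

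Next I will evaluate this identity at an arbitrary $v\in V$ and use \eqref{lef} to rewrite $\frkL_{\frkX}(Tv)=\Courant{\frkX,Tv}$ (viewing $\frkX=x\wedge y$ and interpreting $\Courant{\frkX,Tv}:=\Courant{x,y,Tv}$). Rearranging gives
\begin{equation*}
\bar{\frkT}_1(v)-\frkT_1(v) \;=\; T\bigl(D(\frkX)v\bigr)-\Courant{\frkX,Tv} \;=\; \delta(\frkX)(v),
\end{equation*}
where the last equality is the definition \eqref{delta} of the $0$-cochain coboundary.

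To conclude, I invoke Proposition \ref{formal}, which says that both $\bar{\frkT}_1$ and $\frkT_1$ are $1$-cocycles, i.e.\ elements of $\huaZ^1_T(V,\g)$. On the other hand, by Proposition \ref{0cocy} together with Definition \ref{cohomology}, the map $\delta(\frkX)$ is by construction the image of the $0$-cochain $\frkX\in\wedge^2\g=\huaC^0_T(V,\g)$ under the coboundary map, hence a $1$-coboundary in $\huaB^1_T(V,\g)$. Therefore $\bar{\frkT}_1-\frkT_1\in\huaB^1_T(V,\g)$ and $[\bar{\frkT}_1]=[\frkT_1]$ in $\huaH^1_T(V,\g)$.

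There is no substantive obstacle to overcome in this argument: the theorem is essentially the first-order content of the equivalence \eqref{eq3}, and the whole point of defining $\huaC^0_T(V,\g)=\wedge^2\g$ with coboundary \eqref{delta} in Section~3 was precisely to ensure that this first-order shift is a coboundary. The Lie-Yamaguti homomorphism and representation-intertwining conditions imposed on $\phi_t$ and $\varphi_t$, as well as the higher-order data $\phi_i,\varphi_i$ for $i\geqslant 2$, are not needed at this stage; they would become relevant only for a finer analysis of the equivalence at higher orders (and are, implicitly, what guarantees that the cocycle condition $\delta^T(\bar{\frkT}_1)=\delta^T(\frkT_1)=0$ is consistent with the shift being exactly $\delta(\frkX)$).
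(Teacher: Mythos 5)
Your proposal is correct and follows essentially the same route as the paper: comparing the coefficient of $t^1$ in the equivalence identity \eqref{eq3} to obtain $\bar{\frkT}_1(v)-\frkT_1(v)=T\bigl(D(\frkX)v\bigr)-\Courant{\frkX,Tv}=\delta(\frkX)(v)$, hence a $1$-coboundary. The paper's proof is just a terser version of this same computation, with the cocycle property of the infinitesimals supplied by Proposition \ref{formal} exactly as you note.
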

 \begin{proof}
 Let $(\phi_t,\varphi_t)$ be the maps defined by \eqref{equivalent}, which makes two deformations $\bar T_t=\sum_{i=0}^\infty \bar{\frkT_i}t^i$ and $T_t=\sum_{i=0}^\infty \frkT_it^i$ equivalent. By \eqref{eq3}, we have
 $$\bar{\frkT_1}v=\frkT_1v+TD(\frkX)v-\Courant{\frkX,Tv}=\frkT_1v+\delta(\frkX)(v),\quad \forall v\in V,$$
 which implies that $\bar{\frkT}_1$ and $\frkT_1$ are in the same cohomology classes.
 \end{proof}

 \begin{defi}
 A relative Rota-Baxter operator $T$ is {\bf rigid} if all formal deformations of $T$ are trivial.
 \end{defi}

 \begin{pro}
 Let $T$ be a relative Rota-Baxter operator on a Lie-Yamaguti algebra $(\g,[\cdot,\cdot],\Courant{\cdot,\cdot,\cdot})$ with respect to a representation $(V;\rho,\mu)$. If $\huaZ^1(V,\g)=\delta(\Nij(T))$, then $T$ is rigid.
 \end{pro}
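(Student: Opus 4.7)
The plan is to prove rigidity by killing the formal deformation order by order: starting from $T_t = T + t\frkT_1 + t^2\frkT_2 + \cdots$, we inductively produce an equivalent deformation whose lowest nontrivial term lies at ever higher orders of $t$, and then take the $t$-adic limit.

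First I would set up the base case. By Proposition~\ref{formal}, the infinitesimal $\frkT_1$ is a $1$-cocycle, so $\frkT_1 \in \huaZ^1_T(V,\g)$. The hypothesis $\huaZ^1(V,\g) = \delta(\Nij(T))$ then furnishes $\frkX_1 \in \Nij(T)$ with $\frkT_1 = \delta(\frkX_1)$. Following the proof of Theorem~\ref{Nijenhuis}, the pair
\[
\phi_t = \Id_\g + t\frkL_{\frkX_1}, \qquad \varphi_t = \Id_V + tD(\frkX_1),
\]
truncated at order $t$ is compatible with the Lie-Yamaguti brackets and the representation (this is precisely what equations \eqref{Nije}--\eqref{Nij2}, \eqref{Nij3}, \eqref{Nij4}, \eqref{Nij5} guarantee). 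I would then define $T_t^{(1)} := \phi_t^{-1} \circ T_t \circ \varphi_t$ and, using $\delta(\frkX_1)v = TD(\frkX_1)v - \frkL_{\frkX_1}(Tv)$ from \eqref{delta} together with \eqref{lef}, expand
\[
T_t^{(1)}(v) = T(v) + t\bigl(\frkT_1(v) - \delta(\frkX_1)(v)\bigr) + O(t^2) = T(v) + O(t^2),
\]
so that the new equivalent formal deformation starts at order $t^2$.

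For the inductive step, assume we have an equivalent deformation $T_t^{(n-1)} = T + \sum_{i \geq n} \frkT_i^{(n-1)} t^i$. Comparing the coefficient of $t^n$ in \eqref{deforO1}--\eqref{deforO2}, all mixed terms $\frkT_i^{(n-1)}$ with $1 \leq i \leq n-1$ vanish, so what survives is exactly the condition $\delta^T(\frkT_n^{(n-1)}) = 0$. Hence $\frkT_n^{(n-1)} \in \huaZ^1_T(V,\g)$, and by hypothesis $\frkT_n^{(n-1)} = \delta(\frkX_n)$ for some Nijenhuis element $\frkX_n$. I would then repeat the argument of the base case with the pair
\[
\phi_t^{(n)} = \Id_\g + t^n \frkL_{\frkX_n}, \qquad \varphi_t^{(n)} = \Id_V + t^n D(\frkX_n),
\]
completed by the Nijenhuis conditions into an honest equivalence, to produce $T_t^{(n)} = T + \sum_{i \geq n+1} \frkT_i^{(n)} t^i$. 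Composing all these equivalences gives a sequence of $\K[[t]]$-linear maps that stabilize modulo any fixed power of $t$; taking the $t$-adic limit yields a single equivalence between $T_t$ and the trivial deformation $T$, which is what rigidity means.

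The main obstacle will be justifying cleanly the two technicalities at each inductive stage: (i) verifying that once the lowest-order corrections $\frkT_1^{(n-1)}, \ldots, \frkT_{n-1}^{(n-1)}$ vanish, the equation for $\frkT_n^{(n-1)}$ extracted from \eqref{deforO1}--\eqref{deforO2} collapses to the plain cocycle equation $\delta^T(\frkT_n^{(n-1)}) = 0$ with no contribution from the twisted Lie-Yamaguti brackets on $V$; and (ii) extending the leading pair $(\Id_\g + t^n\frkL_{\frkX_n}, \Id_V + t^n D(\frkX_n))$ to a genuine equivalence in the sense of the definition, which is precisely the role played by the full Nijenhuis conditions \eqref{Nije}--\eqref{Nij2}, \eqref{Nij3}--\eqref{Nij5} rather than the mere cocycle property. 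Once these are in hand, the $t$-adic convergence of the composed equivalences is automatic since the $n$-th factor is the identity modulo $t^n$.
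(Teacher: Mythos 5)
Your proposal is correct and follows essentially the same route as the paper: use the hypothesis to write the lowest-order coefficient as $\delta(\frkX)$ for a Nijenhuis element, conjugate by $(\Id_\g+t^n\frkL_{\frkX},\Id_V+t^nD(\frkX))$ to push the deformation one order higher, and iterate (the paper compresses your induction into the phrase ``repeating this procedure''). The only quibble, shared with the paper, is a sign: the conjugation produces $\frkT_n+\delta(\frkX_n)$ at order $t^n$, so one should choose $\frkX_n$ with $\delta(\frkX_n)=-\frkT_n^{(n-1)}$, which is possible because the Nijenhuis conditions are homogeneous in $\frkX$ and hence $\Nij(T)$ is stable under $\frkX\mapsto-\frkX$.
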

 \begin{proof}
 Let $T_t=\sum_{i=0}^\infty \frkT_it^i$ be a formal deformation of $T$, then Proposition \ref{formal} gives $\frkT_1\in \huaZ^1(V,\g)$. By the assumption, $\frkT_1=\delta(\frkX)$ for some $\frkX\in \wedge^2\g$. Then setting $\phi_t={\Id}_\g+t\frkL_\frkX,~ \varphi_t={\Id}_V+tD(\frkX)$, we get a formal deformation
 $$\bar{T}_t:=\phi_t^{-1}\circ T_t\circ \varphi_t.$$
 Thus $\bar{T}_t$ is equivalent to $T_t$.  Moreover, we have
 \begin{eqnarray*}
 \bar{T}_t&=&({\Id}-\frkL_\frkX t+(\frkL_\frkX)^2t^2+\cdots+(-1)^{i}(\frkL_\frkX)^{i}t^i+\cdots)(T_t(v+tD(\frkX)v))\\
 ~ &=&Tv+(\frkT_1v+T(D(\frkX)v)-\Courant{\frkX,Tv})t+\bar{\frkT}_2vt^2+\cdots\\
 ~ &=&Tv+\bar{\frkT}_2(v)t^2+\cdots.
 \end{eqnarray*}
 Repeating this procedure, we get that $T_t$ is equivalent to $T$.
 \end{proof}

 \subsection{Higher order deformations of relative Rota-Baxter operators on Lie-Yamaguti algebras}
 In this subsection, we will introduce a special cohomology class associated to an order $n$ deformation of a relative Rota-Baxter operator, and show that a deformation of order $n$ of a relative Rota-Baxter operators is extendable if and only if this cohomology class in the second cohomology group is trivial. Thus we call this cohomology class the obstruction class of a deformation of order $n$ being extendable.

 \begin{defi}
 Let $T$ be a relative Rota-Baxter operator on a Lie-Yamaguti algebra $(\g,[\cdot,\cdot],\Courant{\cdot,\cdot,\cdot})$ with respect to a representation $(V;\rho,\mu)$. If $T_t=\sum_{i=0}^n\frkT_it^i $ with  $\frkT_0=T$, $\frkT_i\in \Hom_{\mathbb K}(V,\g)$, $i=1,2,\cdots,n$, defines a $\mathbb K[t]/(t^{n+1})$-module from $V[t]/(t^{n+1})$ to the Lie-Yamaguti algebra $\g[t]/(t^{n+1})$ satisfying
 \begin{eqnarray}
 \label{ordern1}[T_tu,T_tv]&=&T_t(\rho(T_t)u-\rho(T_tv)u),\\
\label{ordern2}\Courant{T_tu,T_tv,T_tw}&=&T_t(D_{\rho,\mu}(T_tu,T_tv)w+\mu(T_tv,T_tw)u-\mu(T_tu,T_tw)v), \quad \forall u,v,w \in V,
 \end{eqnarray}
 we say that $T_t$ is an {\bf order $n$ deformation} of the relative Rota-Baxter operator $T$.
 \end{defi}

 \begin{rmk}
 The left hand side of Eqs. \eqref{ordern1} and \eqref{ordern2} hold in the Lie-Yamaguti algebra $\g[t]/(t^{n+1})$ and the right hand side of  Eqs. \eqref{ordern1} and \eqref{ordern2} make sense since $T_t$ is a $\mathbb K[t]/(t^{n+1})$-module map.
 \end{rmk}

 \begin{defi}
 Let $T_t=\sum_{i=0}^n\frkT_it^i $ be an order $n$ deformation of a relative Rota-Baxter operator $T$ on a Lie-Yamaguti algebra $(\g,[\cdot,\cdot],\Courant{\cdot,\cdot,\cdot})$ with respect to a representation $(V;\rho,\mu)$. If there exists a $1$-cochain $\frkT_{n+1}\in \Hom_{\mathbb K}(V,\g)$ such that $\widetilde{T_t}=T_t+\frkT_{n+1}t^{n+1}$ is an order $n+1$ deformation of $T$, then we say that $T_t$ is {\bf extendable}.
 \end{defi}

The following theorem is the third key conclusion in this section.
 \begin{thm}\label{ob}
Let $T$ be a relative Rota-Baxter operator on a Lie-Yamaguti algebra $(\g,[\cdot,\cdot],\Courant{\cdot,\cdot,\cdot})$ with respect to a representation $(V;\rho,\mu)$, and $T_t=\sum_{i=0}^n\frkT_it^i$ be an order $n$ deformation of $T$. Then $T_t$ is extendable if and only if the cohomology class ~$[\Ob^T]\in \huaH_T^2(V,\g)$ is trivial, where $\Ob^T=(\Ob_{\rm I}^T,\Ob_{\rm II}^T)\in \huaC_T^2(V,\g)$ is defined by
\begin{eqnarray}
\Ob_{\rm I}^T(v_1,v_2)&=&\sum_{i+j=n+1,\atop i,j\geqslant 1}\Big([\frkT_iv_1,\frkT_jv_2]-\frkT_i(\rho(\frkT_jv_1)v_2-\rho(\frkT_jv_2)v_1)\Big),\\
\Ob_{\rm II}^T(v_1,v_2,v_3)&=&\sum_{i+j+k=n+1,\atop n\geqslant i,j,k\geqslant 0}\Big(\Courant{\frkT_iv_1,\frkT_jv_2,\frkT_kv_3}-\frkT_i(D(\frkT_jv_1,\frkT_kv_2)v_3\\
~ \nonumber&&+\mu(\frkT_jv_2,\frkT_kv_3)v_1-\mu(\frkT_jv_1,\frkT_kv_3)v_2\Big), \quad \forall v_1,v_2,v_3 \in V.
\end{eqnarray}
\end{thm}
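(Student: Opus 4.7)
The plan is to translate the extendability problem into a $2$-cochain equation and then certify that the obstruction lives in cohomology. The work splits into three steps, of which only the second requires real effort.

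\emph{Step 1 (extraction of the $t^{n+1}$-coefficient).} I would substitute $\widetilde{T_t}=T_t+\frkT_{n+1}t^{n+1}$ into the two relative Rota-Baxter identities \eqref{ordern1}--\eqref{ordern2} and work modulo $t^{n+2}$. Because $T_t$ is already an order $n$ deformation, the coefficients of $t^s$ for $0\leq s\leq n$ vanish automatically, so the only new condition arises from the coefficient of $t^{n+1}$. I would separate this coefficient into its ``boundary'' part, where exactly one index equals $n+1$ and the remaining ones vanish, and its ``interior'' part, where all indices lie in $[0,n]$. Using the skew-symmetry of $[\cdot,\cdot]$ and of $D_{\rho,\mu}$, the boundary part collapses to $\delta^T_{\rm I}(\frkT_{n+1})(u,v)$ and $\delta^T_{\rm II}(\frkT_{n+1})(u,v,w)$ respectively (compare with the explicit formulae for $\delta^T$ preceding Definition~\ref{cohomology}), while the interior part is precisely $\Ob^T_{\rm I}(u,v)$ and $\Ob^T_{\rm II}(u,v,w)$. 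Consequently, $\widetilde{T_t}$ is an order $n+1$ deformation if and only if
\[
\delta^T(\frkT_{n+1})=-\Ob^T.
\]

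\emph{Step 2 ($\Ob^T$ is a $2$-cocycle).} I would show that $\delta^T(\Ob^T)=0$ holds independently of whether an extension $\frkT_{n+1}$ exists, so that the class $[\Ob^T]\in\huaH^2_T(V,\g)$ is well defined. The inputs are: (i) the $n$ lower-order deformation identities that $T_t$ satisfies, i.e.\ vanishing of the $t^s$-coefficient in \eqref{ordern1}--\eqref{ordern2} for $1\leq s\leq n$; (ii) the Yamaguti axioms for the sub-adjacent algebra $(V,[\cdot,\cdot]_T,\Courant{\cdot,\cdot,\cdot}_T)$; and (iii) the representation axioms for $(\g;\varrho,\varpi)$ supplied by Lemma~\ref{represent1} and Theorem~\ref{represent}. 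Expanding $\delta^T_{\rm I}(\Ob^T)$ and $\delta^T_{\rm II}(\Ob^T)$ produces multi-indexed alternating sums of total weight $n+1$; using (i) to trade bracket expressions on $\g$ for corresponding $\frkT$-applied expressions on $V$, then invoking (ii) and (iii), the sums reorganize so that all terms cancel pairwise.

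\emph{Step 3 (conclusion).} If $T_t$ is extendable, Step 1 furnishes $\frkT_{n+1}$ with $\Ob^T=-\delta^T(\frkT_{n+1})\in\huaB^2_T(V,\g)$, so $[\Ob^T]=0$. Conversely, if $[\Ob^T]=0$ in $\huaH^2_T(V,\g)$, choose any $\frkT_{n+1}\in\huaC^1_T(V,\g)$ with $\delta^T(\frkT_{n+1})=-\Ob^T$; by Step 1, $\widetilde{T_t}=T_t+\frkT_{n+1}t^{n+1}$ is an order $n+1$ deformation of $T$, so $T_t$ is extendable.

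The main obstacle is Step 2: verifying $\delta^T(\Ob^T)=0$ requires a delicate telescoping that mixes all $n$ prior-order deformation identities with the Yamaguti and representation axioms. The difficulty is amplified by the two-component nature of $\delta^T=(\delta^T_{\rm I},\delta^T_{\rm II})$ and by the fact that $\Ob^T_{\rm II}$ is already a triple sum, so $\delta^T_{\rm II}(\Ob^T)$ becomes a sextuple alternating sum before any cancellation takes effect. Steps 1 and 3 are essentially coefficient bookkeeping once Step 2 is in hand.
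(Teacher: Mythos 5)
Your Steps 1 and 3 are exactly the paper's argument: substitute $\widetilde{T_t}=T_t+\frkT_{n+1}t^{n+1}$ into the two Rota-Baxter identities, observe that the coefficients of $t^s$ for $s\leqslant n$ are already zero, and split the $t^{n+1}$-coefficient into the terms involving $\frkT_{n+1}$ (which assemble into $\delta^T_{\rm I}(\frkT_{n+1})$ and $\delta^T_{\rm II}(\frkT_{n+1})$) and the remaining terms (which are exactly $\Ob^T_{\rm I}$ and $\Ob^T_{\rm II}$), yielding the equivalence of extendability with $\delta^T(\frkT_{n+1})=-\Ob^T$. That bookkeeping is correct and is the whole of the paper's proof.

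Where you go beyond the paper is Step 2. You are right that the theorem as stated presupposes $\Ob^T\in\huaZ^2_T(V,\g)$, since otherwise $[\Ob^T]\in\huaH^2_T(V,\g)$ is not a well-defined object; the paper silently omits this verification (in the forward direction it is automatic, because $\Ob^T=-\delta^T(\frkT_{n+1})$ is a coboundary and hence a cocycle, but in the converse direction the hypothesis ``$[\Ob^T]$ is trivial'' already assumes cocycleness). However, your Step 2 is only a description of a strategy --- ``the sums reorganize so that all terms cancel pairwise'' --- and you yourself flag it as the main obstacle. As written, the proposal therefore does not contain the one computation that would make it strictly stronger than the paper's proof; if you only want to reproduce the paper's logic, you can sidestep Step 2 entirely by phrasing the criterion as ``$\Ob^T$ is a coboundary, i.e.\ $\Ob^T\in\huaB^2_T(V,\g)$,'' which is all that Steps 1 and 3 actually use. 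If you want the statement literally as given, you must either carry out the cocycle computation $\delta^T(\Ob^T)=0$ using the lower-order deformation identities, or note explicitly that you are reading the conclusion as membership in $\huaB^2_T(V,\g)$.
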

\begin{proof}
Let $\widetilde{T_t}=\sum_{i=0}^{n+1}\frkT_it^i$ be the extension of $T_t$, then for all $u,v,w \in V$,
\begin{eqnarray}
\label{n order1}[\widetilde{T_t}u,\widetilde{T_t}v]&=&\widetilde{T_t}\Big(\rho(\widetilde{T_t}u)v-\rho(\widetilde{T_t}v)u\Big),\\
\label{n order2}\Courant{\widetilde{T_t}u,\widetilde{T_t}v,\widetilde{T_t}w}&=&\widetilde{T_t}\Big(D(\widetilde{T_t}u,\widetilde{T_t}v)w+\mu(\widetilde{T_t}v,\widetilde{T_t}w)u
-\mu(\widetilde{T_t}u,\widetilde{T_t}w)v\Big).
\end{eqnarray}
Expanding the Eq.\eqref{n order1} and comparing the coefficients of $t^n$ yields that
\begin{eqnarray*}
\sum_{i+j=n+1,\atop i,j \geqslant 0}\Big([\frkT_iu,\frkT_jv]-\frkT_i\big(\rho(\frkT_ju)v-\rho(\frkT_jv)u\big)\Big)=0,
\end{eqnarray*}
which is equivalent to
\begin{eqnarray*}
~ &&\sum_{i+j=n+1,\atop i,j \geqslant 1}\Big([\frkT_iu,\frkT_jv]-\frkT_i\big(\rho(\frkT_ju)v-\rho(\frkT_jv)u\big)\Big)+[\frkT_{n+1}u,Tv]+[Tu,\frkT_{n+1}v]\\
~ &&\quad-\big(T(\rho(\frkT_{n+1}u)v-\rho(\frkT_{n+1}v)u)+\frkT_{n+1}(\rho(Tu)v-\rho(Tv)u)\big)=0,
\end{eqnarray*}
i.e.,
\begin{eqnarray}
\Ob_{\rm I}^T+\delta_{\rm I}^T(\frkT_{n+1})=0.\label{ob:cocy1}
\end{eqnarray}
Similarly, expanding the Eq.\eqref{n order2} and comparing the coefficients of $t^n$ yields that
\begin{eqnarray}
\Ob_{\rm II}^T+\delta_{\rm II}^T(\frkT_{n+1})=0.\label{ob:cocy2}
\end{eqnarray}
From \eqref{ob:cocy1} and \eqref{ob:cocy2}, we get
$$\Ob^T=-\delta^T(\frkT_{n+1}).$$
Thus, the cohomology class $[\Ob^T]$ is trivial.

Conversely, suppose that the cohomology class $[\Ob^T]$ is trivial, then there exists $\frkT_{n+1}\in \huaC_T^1(V,\g)$, such that ~$\Ob^T=-\delta^T(\frkT_{n+1}).$ Set $\widetilde{T_t}=T_t+\frkT_{n+1}t^{n+1}$. Then for all $0 \leqslant s\leqslant n+1$, ~$\widetilde{T_t}$ satisfies
\begin{eqnarray*}
\sum_{i+j=s}\Big([\frkT_iu,\frkT_jv]-\frkT_i\big(\rho(\frkT_ju)v-\rho(\frkT_jv)u\big)\Big)=0,\\
\sum_{i+j+k=s}\Big(\Courant{\frkT_iu,\frkT_jv,\frkT_kw}-\frkT_i\big(D(\frkT_ju,\frkT_kv)w+\mu(\frkT_jv,\frkT_kw)u-\mu(\frkT_ju,\frkT_kw)v\big)\Big)=0.
\end{eqnarray*}
which implies that $\widetilde{T_t}$ is an order $n+1$ deformation of $T$. Hence it is a extension of $T_t$.
\end{proof}

\begin{defi}
Let $T$ be a relative Rota-Baxter operator on a Lie-Yamaguti algebra $(\g,[\cdot,\cdot],\Courant{\cdot,\cdot,\cdot})$ with respect to a representation $(V;\rho,\mu)$, and $T_t=\sum_{i=0}^n\frkT_it^i$ be an order $n$ deformation of $T$. Then the cohomology class $[\Ob^T]\in \huaH_T^2(V,\g)$ defined in Theorem {\rm \ref{ob}} is called the {\bf obstruction class } of $T_t$ being extendable.
\end{defi}

\begin{cor}
Let $T$ be a relative Rota-Baxter operator on a Lie-Yamaguti algebra $(\g,[\cdot,\cdot],\Courant{\cdot,\cdot,\cdot})$ with respect to a representation $(V;\rho,\mu)$. If $\huaH_T^2(V,\g)=0$, then every $1$-cocycle in $\huaZ_T^1(V,\g)$ is the infinitesimal of some formal deformation of the relative Rota-Baxter operator $T$.
\end{cor}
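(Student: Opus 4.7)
The plan is to prove the corollary by an induction that builds a formal deformation order by order, using Theorem \ref{ob} to push from order $n$ to order $n+1$ at each step.

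First, I would start with the base case. Given $\frkT_1 \in \huaZ^1_T(V,\g)$, set $T_t^{(1)} := T + t\frkT_1$. I need to verify that this is an order $1$ deformation of $T$, that is, Eqs. \eqref{ordern1} and \eqref{ordern2} hold modulo $t^2$. Expanding both sides and collecting coefficients of $t^0$ recovers the fact that $T$ itself is a relative Rota-Baxter operator; collecting the coefficient of $t^1$ gives precisely $\delta_{\rm I}^T(\frkT_1)=0$ and $\delta_{\rm II}^T(\frkT_1)=0$, which hold since $\frkT_1$ is a $1$-cocycle (compare the computation in the proof of Proposition \ref{formal}).

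Next, I would carry out the inductive step. Assume that for some $n\geqslant 1$ we have constructed an order $n$ deformation $T_t^{(n)} = T + \sum_{i=1}^{n} \frkT_i t^i$ extending the given $\frkT_1$. By Theorem \ref{ob}, $T_t^{(n)}$ is extendable to an order $n+1$ deformation if and only if the obstruction class $[\Ob^T]\in \huaH_T^2(V,\g)$ is trivial. Since $\huaH_T^2(V,\g)=0$ by hypothesis, this condition is automatically satisfied; hence there exists $\frkT_{n+1}\in \Hom_{\mathbb K}(V,\g)$ with $\Ob^T = -\delta^T(\frkT_{n+1})$, and $T_t^{(n+1)} := T_t^{(n)} + \frkT_{n+1} t^{n+1}$ is an order $n+1$ deformation of $T$ extending $\frkT_1$.

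Iterating this procedure produces a sequence $\{\frkT_i\}_{i\geqslant 1}$ such that the formal power series $T_t := T + \sum_{i=1}^{\infty} \frkT_i t^i$ satisfies Eqs. \eqref{deforO1} and \eqref{deforO2} to all orders in $t$. Therefore $T_t$ is a formal deformation of $T$ whose infinitesimal is $\frkT_1$, which proves the corollary. The only subtle point worth emphasizing is the implicit claim in Theorem \ref{ob} that $\Ob^T$ lies in $\huaZ_T^2(V,\g)$ so that $[\Ob^T]$ is well defined; this is what allows the vanishing of $\huaH_T^2(V,\g)$ to force the obstruction to be a coboundary. No further difficulty arises, as everything else is dictated by the cohomological machinery already set up.
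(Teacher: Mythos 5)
Your proposal is correct and is exactly the argument the paper intends: the corollary is stated without proof as an immediate consequence of Theorem \ref{ob}, and the order-by-order induction you describe (base case from the $s=1$ computation in Proposition \ref{formal}, inductive step from the vanishing of $\huaH_T^2(V,\g)$) is the standard way to fill it in. Your remark that one must know $\Ob^T\in\huaZ_T^2(V,\g)$ for the obstruction class to be well defined is apt --- this is implicit in Theorem \ref{ob} and is the only point the paper (and your sketch) leaves unverified.
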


\end{document}